\documentclass[11pt]{amsart}
\usepackage{amsmath,amssymb,amscd,amsthm,amsxtra}
\usepackage{latexsym}

\usepackage[dvips]{graphicx}        

\headheight=8pt
\topmargin=5pt
\textheight=620pt
\textwidth=439pt
\oddsidemargin=18pt
\evensidemargin=18pt

\newtheorem{theorem}{Theorem}[section]
\newtheorem{lemma}{Lemma}[section]
\newtheorem{proposition}{Proposition}[section]

\newtheorem{definition}{Definition}[section]
\newtheorem{corollary}{Corollary}[section]



\newcommand{\e}{\varepsilon}



\newcommand{\p}{\partial}

\newcommand{\wh}{\widehat}

\newcommand{\ti}{\tilde}
\newcommand{\Id}{{\bf 1}}

\begin{document}
\title{Discrete Fourier Restriction associated with KdV equations}
\author{Yi Hu}
\address{
Yi Hu\\
Department of Mathematics\\
University of Illinois at Urbana-Champaign\\
Urbana, IL, 61801, USA}

\email{yihu1@illinois.edu}

\author{Xiaochun Li}

\address{
Xiaochun Li\\
Department of Mathematics\\
University of Illinois at Urbana-Champaign\\
Urbana, IL, 61801, USA}

\email{xcli@math.uiuc.edu}

\thanks{ This work was partially supported by an NSF grant DMS-0801154}

\begin{abstract} In this paper, we consider a discrete restriction associated with KdV equations. 
Some new Strichartz estimates are obtained.  We also establish the local well-posedness for 
 the periodic generalized Korteweg-de Vries equation with nonlinear term $ F(u)\p_x u$ provided $F\in C^5$
 and the initial data $\phi\in H^s$ with $s>1/2$.  
\end{abstract} 

\maketitle

\section{Introduction}
\setcounter{equation}0

The discrete restriction problem associated with KdV equations is a problem asking the best constant $A_{p, N}$ 
satisfying
\begin{equation}\label{res-1}
\sum_{n=-N}^{N} \left| \wh f(n, n^3)\right|^2\leq A_{p,N}\|f\|_{p'}^2\,,
\end{equation}
where $f$ is a periodic function on $\mathbb T^2 $, $\wh f$ is Fourier transform of $f$ on $\mathbb T^2$,
$p\geq 2$ and $p'=p/(p-1)$. It is natural to pose a conjecture asserting that for any $\e>0$, $A_{p, N}$ satisfies
\begin{equation}
A_{p,N}\leq
\begin{cases}
C_pN^{1- \frac{8}{p} + \e}\quad &\text{for}\ \  p\geq  8\\
C_p                            &\text{for}\ \  2\leq p< 8\,. 
\end{cases}
\end{equation}
It was proved by Bourgain that $A_{6,N}\leq N^\e$. The desired upper bound for $A_{8, N}$ is  
not yet obtained, however, we are able to establish an affirmative answer for large $p$ cases.

\begin{theorem}\label{thm1}
Let $A_{p, N}$ be defined as in (\ref{res-1}). If $p\geq 14$, then for any $\e>0$, there exists a
constant $C_p$ independent of $N$ such that
\begin{equation}\label{res-est}
A_{p, N} \leq C_p N^{1-\frac{8}{p}+ \e}\,. 
\end{equation}
\end{theorem}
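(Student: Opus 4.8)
The plan is to pass to the dual (extension) formulation, reduce to the single exponent $p=14$ by interpolation with the trivial $L^\infty$ bound, and then convert the $L^{14}$ estimate into a counting problem for the curve $n\mapsto(n,n^3)$.

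\noindent\textbf{Step 1 (duality, and reduction to $p=14$).} By duality, (\ref{res-1}) with constant $A_{p,N}$ is equivalent to the extension inequality
\[
\Bigl\|\sum_{|n|\le N}a_n e^{i(nx+n^3t)}\Bigr\|_{L^p(\mathbb T^2)}^2\ \le\ A_{p,N}\sum_{|n|\le N}|a_n|^2 .
\]
Since $\bigl\|\sum_{|n|\le N}a_ne^{i(nx+n^3t)}\bigr\|_{L^\infty}\le(2N+1)^{1/2}\bigl(\sum|a_n|^2\bigr)^{1/2}$, interpolating this trivial bound with the $p=14$ case gives (\ref{res-est}) for every $p\ge14$ once it is known at $p=14$: the exponent is exactly right, $\tfrac12-\tfrac4p=\tfrac{14}{p}\cdot\tfrac{3}{14}+\bigl(1-\tfrac{14}{p}\bigr)\cdot\tfrac12$. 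So it suffices to prove $\bigl\|\sum a_ne^{i(nx+n^3t)}\bigr\|_{L^{14}}^{14}\le C_\e N^{3+\e}\bigl(\sum|a_n|^2\bigr)^{7}$.

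\noindent\textbf{Step 2 (from $L^{14}$ to a counting function).} Put $f=\sum_{|n|\le N}a_ne^{i(nx+n^3t)}$. Then $\|f\|_{14}^{14}=\|f^7\|_{L^2(\mathbb T^2)}^2$, and by Plancherel on $\mathbb T^2$,
\[
\|f^7\|_{L^2}^2=\sum_{(m,\ell)\in\mathbb Z^2}\Bigl|\sum_{\substack{n_1+\dots+n_7=m\\ n_1^3+\dots+n_7^3=\ell}}a_{n_1}\cdots a_{n_7}\Bigr|^2 .
\]
Applying Cauchy--Schwarz inside the absolute value and then summing in $(m,\ell)$ yields $\|f\|_{14}^{14}\le\bigl(\sup_{m,\ell}J(m,\ell)\bigr)\bigl(\sum_{|n|\le N}|a_n|^2\bigr)^{7}$, where $J(m,\ell)$ is the number of $(n_1,\dots,n_7)\in\{-N,\dots,N\}^7$ with $n_1+\dots+n_7=m$ and $n_1^3+\dots+n_7^3=\ell$. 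Thus the entire theorem reduces to the uniform counting bound $\sup_{m,\ell}J(m,\ell)\le C_\e N^{3+\e}$.

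\noindent\textbf{Step 3 (the counting estimate).} This is the arithmetic core. Peeling off one variable, $J(m,\ell)=\sum_{|n_7|\le N}J_6(m-n_7,\ell-n_7^3)$, so it is enough to control the $6$-term count $J_6$ along the translated cubic curve $n_7\mapsto(m-n_7,\ell-n_7^3)$. To estimate $J_6(a,b)$ I would pair the variables as $(n_1,n_2),(n_3,n_4),(n_5,n_6)$ and use the elementary identity $n^3+\mu^3=(n+\mu)\bigl((n+\mu)^2-3n\mu\bigr)$: if a pair has prescribed sum $u=n_{2i-1}+n_{2i}\ne0$ and cube-sum $v$, then $n_{2i-1}n_{2i}$ is forced by $(u,v)$, so there are at most two such pairs; if $u=0$ then $v=0$ and the pair is a ``null pair'' $(n,-n)$ contributing a factor $2N+1$. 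Splitting according to how many of the three pair-sums vanish, the nondegenerate contributions are handled with the divisor bound $d(k)\ll_\e k^{\e}$ (and, for sub-configurations with a loose variable, the bound $\#\{n^3+\mu^3=v\}\ll_\e v^{\e}$ for $v\ne0$), while the degenerate pieces are lower order and are estimated by isolating the null pairs and bounding the remaining smaller system; tracking the powers of $N$ and keeping everything uniform in $(m,\ell)$ gives $\sup_{m,\ell}J(m,\ell)\ll_\e N^{3+\e}$.

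\noindent\textbf{Where the difficulty lies.} The serious point is Step 3. The degenerate configurations already produce $\gtrsim N^{3}$ solutions — e.g. $(a,-a,b,-b,c,-c,n_7)$ with $n_7=m$, $\ell=m^3$ — which is precisely why one needs \emph{seven} variables: with six variables the sixth moment itself costs $N^{3}$, with no extra variable left to disperse the mass (this is the source of the restriction $p\ge14$). Hence the heart of the argument is to show that the degenerate \emph{and} the generic contributions are both $O(N^{3+\e})$ uniformly in $(m,\ell)$; the generic part genuinely requires cancellation/equidistribution of the cubic sum (a Weyl‑differencing, Hua‑type, or circle‑method input), not merely size estimates, since the crude bound $J(m,\ell)\le\int_{\mathbb T^2}\bigl|\sum_{|n|\le N}e^{i(n\alpha+n^3\beta)}\bigr|^{7}d\alpha\,d\beta$ is already too lossy (it is of size roughly $N^{7/2}$, well above $N^{3}$).
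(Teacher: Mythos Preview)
Your Steps 1 and 2 are correct: the duality, the interpolation to $p=14$, and the Cauchy--Schwarz reduction to $\sup_{m,\ell}J(m,\ell)$ are all valid. The gap is Step 3, which you flag as ``the serious point'' but do not prove. The pairing sketch does not deliver $N^{3+\e}$: fixing the three pair-sums $(u_1,u_2,u_3)$ with $\sum u_i=a$ already costs $\sim N^2$ choices, and for each such triple the products $(p_1,p_2,p_3)$ satisfy a single linear relation, leaving another $\sim N^2$ choices --- so the nondegenerate part of $J_6(a,b)$ is only bounded by $\lesssim N^4$ this way, far above what you need after summing in $n_7$. The divisor bound $\#\{n^3+\mu^3=v\}\ll_\e |v|^\e$ does not help, because the target $v$ is not fixed; you are counting lattice points on a variety, not divisors of one integer. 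More structurally, $\sup_{m,\ell}J(m,\ell)$ is the $\ell^\infty$ norm of $\widehat{K_N^7}$, whereas the Strichartz bound you want is the $\ell^2$ statement $\sum_{m,\ell}J(m,\ell)^2\le N^{10+\e}$; the former is at least as hard and does not follow from it, so you have exchanged the problem for one no easier and then left it open.

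The paper takes a different route that bypasses pointwise counting entirely. It works with the level sets $E_\lambda=\{|F_N|>\lambda\}$ and, via a $TT^*$ step, obtains $\lambda^2|E_\lambda|^2\le\langle K_N*f,f\rangle$ with $|f|\le{\bf 1}_{E_\lambda}$. The kernel $K_N(x,t)=\sum_{|n|\le N}e^{2\pi i(nx+n^3t)}$ is then decomposed as $K_{1,Q}+K_{2,Q}$ for a parameter $N^2\le Q\le N^3$: the piece $K_{1,Q}$ is supported where $t$ lies near rationals with denominator $\sim Q$ and satisfies $\|K_{1,Q}\|_\infty\lesssim N^{1/4+\e}Q^{1/4}$ by Weyl's inequality for cubic sums, while $\|\widehat{K_{2,Q}}\|_\infty\lesssim N^\e/Q$. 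Optimizing $Q$ gives $|E_\lambda|\le CN^{1+\e}\lambda^{-10}$ for $\lambda\gtrsim N^{3/8}$; combining this with Bourgain's $L^6$ estimate (which gives $|E_\lambda|\le N^\e\lambda^{-6}$) and splitting $\|F_N\|_p^p=p\int_0^\infty\lambda^{p-1}|E_\lambda|\,d\lambda$ at $\lambda\sim N^{3/8}$ yields $N^{p/2-4+\e}$ precisely when $p\ge14$. The Weyl input you correctly anticipated thus enters as a pointwise kernel bound feeding a bilinear level-set inequality, not through direct control of the additive count $J(m,\ell)$.
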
 

The periodic Strichartz inequality associated to KdV equations is the inequality seeking for the best constant $K_{p,N}$ satisfying 
\begin{equation}\label{Stri}
\left\|\sum_{n=-N}^N a_n e^{2\pi i t n^3+ 2\pi i x n }\right\|_{L^p_{x,t}(\mathbb T\times \mathbb T)}\leq K_{p, N} \left(
\sum_{n=-N}^N |a_n|^2\right)^{\frac{1}{2}}\,.
\end{equation} 
By duality, we see immediately 
$$
 K_{p, N}\sim \sqrt{A_{p,N}}\,.
$$
Henceforth, Theorem \ref{thm1} is equivalent to Strichartz estimates, 
\begin{equation}
K_{p, N}\leq CN^{\frac{1}{2}-\frac{4}{p}+\e}\,, \,\,\,{\rm for}\,\,\, {p\geq 14}\,. 
\end{equation}

It was observed by Bougain that the periodic Strichartz inequalities (\ref{Stri}) for $p=4, 6$
are crucial for obtaining the local well-posedness of periodic KdV (mKdV or gKdV).  
The local (global) well-posedness of periodic KdV for $s\geq 0$ was first studied by Bourgain in
\cite{B2}. Via a bilinear estimate approach, Kenig, Ponce and Vega in \cite {Ke} established the local 
well-posedness of periodic KdV for $s>-1/2$. 
The sharp global well-posedness of the periodic KdV was proved by Colliander,  Keel, Staffilani, Takaoka, and Tao in \cite{Tao1}, by utilizing the $I$-method.  \\

Inspired by Bourgain's work, we can obtain the following theorem on gKdV. 
Here the gKdV is the generalized  Korteweg-de Vries (gKdV) equation
\begin{equation}\label{gKdV}
\begin{cases}
u_t+u_{xxx}+u^ku_x=0\\
u(x,0)=\phi(x),\qquad x\in\mathbb{T},\ t\in\mathbb{R}\,,
\end{cases}
\end{equation}
where $k\in \mathbb N$ and $k\geq 3$.

\begin{theorem}\label{LWP-KdV}
The Cauchy problem (\ref{gKdV}) is locally well-posed if the initial data $\phi\in H^s$ for $s>1/2$.  
\end{theorem}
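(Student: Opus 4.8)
The plan is to run a contraction-mapping argument in a Bourgain-type space $X^{s,b}$ adapted to the KdV symbol $\tau = \xi^3$ on $\mathbb{T}\times\mathbb{R}$, using the Strichartz estimates of Theorem \ref{thm1} as the basic linear input. First I would set up the function spaces: for $s>1/2$ and $b$ slightly larger than $1/2$, define $\|u\|_{X^{s,b}}^2 = \sum_{n}\int_{\mathbb{R}} \langle n\rangle^{2s}\langle \tau - n^3\rangle^{2b}|\widehat{u}(n,\tau)|^2\,d\tau$, and work on a time interval $[-\delta,\delta]$ via the restriction norm $X^{s,b}_\delta$. The integral (Duhamel) formulation is
\begin{equation}\label{duhamel}
u(t) = \psi(t)\,e^{-t\partial_x^3}\phi - \psi_\delta(t)\int_0^t e^{-(t-t')\partial_x^3}\bigl(u^k \partial_x u\bigr)(t')\,dt'\,,
\end{equation}
where $\psi$ is a smooth time cutoff and $\psi_\delta(t)=\psi(t/\delta)$. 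The linear estimate $\|\psi\,e^{-t\partial_x^3}\phi\|_{X^{s,b}}\lesssim \|\phi\|_{H^s}$ and the Duhamel estimate $\|\psi_\delta \int_0^t e^{-(t-t')\partial_x^3}F\,dt'\|_{X^{s,b}}\lesssim \delta^{1-b-b'}\|F\|_{X^{s,-b'}}$ (for $0<b'<1/2<b$, $b+b'<1$) are standard, so everything reduces to the multilinear estimate
\begin{equation}\label{multilin}
\bigl\|\,u^k\,\partial_x u\,\bigr\|_{X^{s,-b'}} \;\lesssim\; \|u\|_{X^{s,b}}^{k+1}\,.
\end{equation}

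To prove \eqref{multilin} I would dualize: pair $u^k\partial_x u$ against a test function $v\in X^{-s,b'}$ and estimate the resulting $(k+2)$-linear integral over frequencies $n_0 + n_1 + \cdots + n_{k+1} = 0$. Decomposing each factor into dyadic pieces in the modulation variable $\langle \tau_j - n_j^3\rangle \sim 2^{L_j}$ and dyadic pieces in frequency $\langle n_j\rangle\sim 2^{N_j}$, the arithmetic identity for the KdV resonance function, $\sum_j (\tau_j - n_j^3) = -\sum_j n_j^3$ on the hyperplane $\sum n_j = 0$, gives the key lower bound on the largest modulation; this is what lets us absorb the derivative $\partial_x$ (one factor of the top frequency) and the loss from $s$ being only slightly above $1/2$. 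After using Plancherel to convert the frequency-localized pieces to $L^2_{x,t}$, I would apply a Hölder-type estimate combined with the periodic Strichartz inequality \eqref{Stri} — in the form furnished by Theorem \ref{thm1}, i.e. $K_{p,N}\lesssim N^{1/2 - 4/p + \e}$ for $p\ge 14$ — to control the product of the extension operators. The number $s>1/2$ arises precisely by optimizing: one needs the product of the $N^{1/2-4/p}$-type gains against the $N^{-s}$ weights on the high-frequency factors, together with a positive power of the smallest modulation, to sum in all the dyadic parameters; a summable surplus $N^{-\e'}$ forces $s > 1/2$.

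Once \eqref{multilin} is in hand, the standard fixed-point scheme finishes the proof: the map defined by the right side of \eqref{duhamel} is a contraction on a ball in $X^{s,b}_\delta$ for $\delta = \delta(\|\phi\|_{H^s})$ small, yielding existence and uniqueness of a solution $u\in C([-\delta,\delta];H^s)\cap X^{s,b}_\delta$; continuous dependence on $\phi$ and persistence of regularity follow from the same multilinear estimate applied to differences, together with the contraction estimates. The main obstacle I anticipate is the multilinear estimate \eqref{multilin} with $k$ arbitrarily large ($k\ge 3$): the Strichartz bound of Theorem \ref{thm1} is only available for $p\ge 14$, so one cannot afford to put each of the $k+1$ high-frequency factors in $L^p$ naively; instead one must place most factors in $L^\infty_t L^2_x$ or exploit that $X^{s,b}\hookrightarrow L^\infty_{x,t}$ fails but $X^{s,b}\hookrightarrow L^p_{x,t}$ holds for the relevant $p$, and carefully distribute the frequency weights so that only a bounded number of factors ever sit in the "expensive" Strichartz norm. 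Handling the general $C^5$ nonlinearity $F(u)\partial_x u$ mentioned in the abstract would then be a perturbation of the monomial case via a Taylor expansion of $F$, but for the clean statement of Theorem \ref{LWP-KdV} the power nonlinearity $u^k\partial_x u$ is what we estimate directly.
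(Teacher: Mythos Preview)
Your outline has the right shape but contains one genuine gap and one misidentification of the key analytic input.

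\textbf{The gauge transform is missing.} On the torus the multilinear estimate $\|u^k\partial_x u\|_{X^{s,-b'}}\lesssim \|u\|_{X^{s,b}}^{k+1}$ as you wrote it is false, because of the resonant set where the frequencies $n_1,\dots,n_k$ of the $k$ copies of $u$ in $u^k$ sum to zero. In that regime the output frequency $n$ equals the frequency $m$ of $\partial_x u$, and the KdV resonance function $n^3-(m^3+n_1^3+\cdots+n_k^3)$ degenerates: in the paper's identity
\[
n^3-(m^3+n_1^3+\cdots+n_k^3)=3(m+n_1)(m+a)(n_1+a)+a^3-(n_2^3+\cdots+n_k^3),\qquad a=n_2+\cdots+n_k,
\]
the factor $n_1+a=n_1+\cdots+n_k$ vanishes, so no modulation is forced to be large and there is nowhere to put the extra factor of $|m|$. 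The paper fixes this by first applying the gauge transform $u(x,t)\mapsto u(x-\int_0^t\!\int_{\mathbb T}u^k\,dy\,d\tau,\,t)$, which replaces the nonlinearity by $\bigl(u^k-\int_{\mathbb T}u^k\,dx\bigr)u_x$; in Fourier this imposes precisely the constraint $n_1+\cdots+n_k\neq 0$, and only then does the resonance argument go through. You need to insert this step before attempting the multilinear estimate.

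\textbf{Theorem \ref{thm1} is not the input used.} The paper's proof of the multilinear bound (Proposition \ref{propofw}) does not use the new $p\geq 14$ Strichartz estimate at all. It relies entirely on Bourgain's $L^4$ and $L^6$ estimates, packaged as the embeddings $X_{0,1/3}\hookrightarrow L^4_{x,t}$, $X_{0+,1/2+}\hookrightarrow L^6_{x,t}$, and their interpolants. After the gauge, the paper splits into three cases according to the relative sizes of $|m|$, $|n_1|$, $|n_2|$; in each case either $|m|$ is redistributed as $\langle n_1\rangle^{1/2}\langle n_2\rangle^{1/2}\cdots$ and H\"older with $L^4/L^6$ closes, or the resonance identity forces one modulation $\gtrsim |m|^2$ and that weight absorbs the derivative. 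The extra $k-2$ or $k-3$ factors are placed in high $L^p$ via $X_{1/2-,1/2-}\hookrightarrow L^q$, which costs only $s>1/2$. So your worry about ``only $p\ge 14$ being available'' is misplaced: the $L^4$ and $L^6$ endpoints are what drive the argument.

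A smaller technical point: the paper works at the endpoint $b=\tfrac12$ and augments $X_{s,1/2}$ with the auxiliary norm $\bigl(\sum_n\langle n\rangle^{2s}(\int|\widehat u(n,\lambda)|\,d\lambda)^2\bigr)^{1/2}$ to form $Y_s$, rather than taking $b>\tfrac12$ as you propose. Either setup can be made to work, but you should be aware that the paper's estimates are phrased for $\|w\|_{s,-1/2}$ plus an $\ell^2_n L^1_\lambda$ piece, not for $\|w\|_{s,-b'}$ with $b'<\tfrac12$.
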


Theorem \ref{LWP-KdV} is not new. It was proved by Colliander,  Keel, Staffilani, Takaoka, and Tao 
in \cite{Tao}. However, our method is different from the method in \cite{Tao}. Let us point out 
the difference here. The method used in \cite{Tao} is based on a rescaling argument and the bilinear estimates, proved by Kenig, Ponce and Vega \cite{Ke}. Our method is more straightforward and does not 
need to go through the rescaling argument, the bilinear estimates in \cite{Ke} or the multilinear estimates in \cite{Tao}.
This allows us to extend Theorem \ref{LWP-KdV} to 
a very general setting. More precisely, consider the Cauchy problem for 
periodic generalized Korteweg-de Vries (gKdV) equation 
\begin{equation}\label{KdV0}
\begin{cases}
u_t+u_{xxx}+ F(u)u_x=0\\
u(x,0)=\phi(x),\qquad x\in\mathbb{T},\ t\in\mathbb{R}\,.
\end{cases}
\end{equation}
Here $F$ is a suitable function. Then the following theorem can be established.

\begin{theorem}\label{LWP-KdV-F}
The Cauchy problem (\ref{KdV0}) is locally well-posed provided 
$F$ is a $C^5$ function and the initial data $\phi\in H^s$ for $s>1/2$.  
\end{theorem}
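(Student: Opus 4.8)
\bigskip
\noindent\emph{Proof proposal.} The plan is to solve the Duhamel equation
\[
u(t)=e^{-t\p_x^3}\phi-\intl_0^t e^{-(t-t')\p_x^3}\bigl(F(u)\,\p_x u\bigr)(t')\,dt'
\]
by a contraction argument in the Bourgain space $X^{s,b}$ attached to the KdV group, with $\|u\|_{X^{s,b}}^2=\suml_{n}\intl_{\mathbb R}\langle n\rangle^{2s}\langle\tau-n^3\rangle^{2b}|\wh u(n,\tau)|^2\,d\tau$, taking $b=\tfrac12+\e$ with $\e>0$ small and working on a short interval $[0,\de]$. The standard linear estimates $\|e^{-t\p_x^3}\phi\|_{X^{s,b}_\de}\lesssim\|\phi\|_{H^s}$ and $\bigl\|\intl_0^t e^{-(t-t')\p_x^3}G\,dt'\bigr\|_{X^{s,b}_\de}\lesssim\de^{\theta}\|G\|_{X^{s,b-1}_\de}$ (with $\theta>0$) reduce everything, including the difference bound needed for uniqueness and continuous dependence, to a single multilinear estimate for $F(u)\,\p_x u$ in $X^{s,b-1}$; the factor $\de^{\theta}$ is what closes the iteration on a small interval.

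Since $s>\tfrac12$, one has $X^{s,b}\hookrightarrow C_tH^s_x\hookrightarrow C_tL^\infty_x$ and $H^s(\mathbb T)$ is a Banach algebra, so $\|u\|_{L^\infty_{t,x}}\lesssim\|u\|_{X^{s,b}}$. First I would Taylor-expand $F$ to finite order, using $F\in C^5$ and the boundedness of $u$,
\[
F(u)\,\p_x u=\suml_{j=0}^{4}\f{F^{(j)}(0)}{(j+1)!}\,\p_x\!\bigl(u^{j+1}\bigr)+R(u)\,\p_x u,\qquad R(u)=\f{u^{5}}{4!}\intl_0^1(1-\theta)^4F^{(5)}(\theta u)\,d\theta ,
\]
the $C^5$ hypothesis being precisely what controls $R(u)$ (and the analogous quantities in the difference estimate) in $C_tH^s_x$ by functions of $\|u\|_{X^{s,b}}$, so that $R(u)\,\p_x u$ counts, for the estimates below, as a product of at least six factors; the term $F(0)\,\p_xu$ is linear and harmless. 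In each remaining piece I would isolate the \emph{resonant} part, the interaction in which the inner factors combine to frequency zero, which is of the schematic form $\rho(t)\,\p_x u$ with $\rho$ bounded on $[0,\de]$, and remove it by the change of variables $u(x,t)\mapsto u\bigl(x-\psi(t),t\bigr)$ with $\psi'$ chosen to cancel it; one may then also assume $\phi$ has mean zero. After these reductions everything comes down to the multilinear estimate
\[
\bigl\|\p_x(u_1\cdots u_m)\bigr\|_{X^{s,b-1}}\lesssim\prod_{i=1}^m\|u_i\|_{X^{s,b}},\qquad m\ge 2 .
\]

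To prove this I would decompose the interaction by the KdV resonance identity $\tau-n^3=\suml_i(\tau_i-n_i^3)+H$, $H=\suml_i n_i^3-\bigl(\suml_i n_i\bigr)^3$, $n=\suml_i n_i$. On the region where $|H|$ is bounded below by a positive power of the largest frequency, some $\langle\tau-n^3\rangle$ or $\langle\tau_i-n_i^3\rangle$ is correspondingly large; as $b-1<0$ this yields a favorable negative power of the frequency, and combined with the surplus Sobolev weights of the other large-frequency factors it absorbs the derivative $\p_x$, after which the remaining product of $L^p_{t,x}$ norms is controlled by Hölder, the classical $L^4$ periodic Strichartz inequality for the curve $\tau=n^3$ (a lattice-point count), and the $L^\infty_{t,x}$ bound on the low-frequency spectator factors --- the last step being exactly where $s>\tfrac12$ is needed. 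On the complementary region $|H|$ is small, which forces the high frequencies to occur in nearly opposite pairs, $n_i+n_{i'}\approx0$; after the reduction above the surviving interactions are of quadratic or cubic type with bounded time-dependent coefficients, and recovering one derivative from a pair at frequencies $\pm N$ costs $\langle N\rangle$ against the two weights $\langle N\rangle^{s}$, whose product exceeds it because $2s>1$. These are then dispatched by adapting Bourgain's periodic KdV estimates.

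The hard part will be this resonant region, since the single derivative in the nonlinearity makes $s=\tfrac12$ the genuine threshold, so the recovery must be done by hand: one must verify that the change of variables $\psi$ is an admissible transformation on $X^{s,b}_\de$ with bounds depending only on $\|\phi\|_{H^s}$ and $\de$, check that $\rho(t)$ and the other bounded-in-time multipliers produced along the way are regular enough to be harmless in the $X^{s,b-1}$ norm, and confirm that the paired interactions still leave a power of $\de$ to spare. Everything else --- the linear estimates, the $C^5$ bookkeeping for the remainder, the assembly of the contraction, undoing the change of variables, and the passage from the \emph{a priori} estimate to local well-posedness --- is routine.
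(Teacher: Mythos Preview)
Your outline correctly identifies the gauge transform removing the resonant (mean-zero) interaction and the reduction to a multilinear estimate of the type $\|\p_x(u_1\cdots u_m)\|_{X^{s,b-1}}\lesssim\prod_i\|u_i\|_{X^{s,b}}$; this is essentially Proposition~\ref{propofw}, proved in Section~\ref{proofP1}. The gap is in your handling of the Taylor remainder. You write $R(u)=\frac{u^5}{4!}\int_0^1(1-\theta)^4 F^{(5)}(\theta u)\,d\theta$ and assert that $R(u)\,\p_xu$ ``counts \ldots\ as a product of at least six factors''. But the factor $g(u)=\int_0^1(1-\theta)^4 F^{(5)}(\theta u)\,d\theta$ is only in $L^\infty_{t,x}$, since $F^{(5)}$ is merely continuous; there is no way to place it in $X^{s,b}$, so the multilinear estimate for products of $X^{s,b}$ functions does not apply to $u^5 g(u)\,\p_xu$. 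Multiplication by a generic $L^\infty$ function is not bounded on $X^{s,b-1}$ (these spaces carry regularity in both $x$ and $t$), and the derivative-recovery mechanism via the resonance function requires $X^{s,b}$-type control on the high-frequency factors. Your observation that $R(u)\in C_tH^s_x$ is correct but does not help: $C_tH^s_x$ still gives no control on the $\langle\tau-n^3\rangle$ weight.

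The paper avoids Taylor expansion entirely and uses instead a Bony-type paralinearization: telescope $F(u)=\sum_K\bigl(F(P_Ku)-F(P_{K/2}u)\bigr)$ and iterate three times to obtain $F(u)=\sum_{K_1\ge K_2\ge K_3}G_{K_3}\,u_{K_1}u_{K_2}u_{K_3}+(\text{lower order})$, with $G_{K_3}=F_3(P_{4K_3}u,\ldots,P_{K_3/8}u)$ a nonlinear function of \emph{band-limited} inputs at frequency $\lesssim K_3$. This is the essential point: $G_{K_3}$ is still only controlled in $L^\infty$ (the paper says explicitly that ``there is no way to find a suitable upper bound for its $X_{s,b}$ norm''), but because it depends only on frequencies $\lesssim K_3$ one has $\|\p_x^\alpha G_{K_3}\|_\infty\lesssim K_3^\alpha$, and $K_3$ is the \emph{smallest} of the three scales. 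That frequency localization is what allows the case analysis of Sections~\ref{subsection1F}--\ref{subsection3F} to close; the $C^5$ hypothesis enters not through a Taylor remainder but through the number of derivatives of $F_3$ (and its further time-telescoped descendant $F_4$) needed to define the constants $\mathfrak M,\mathfrak M_1$. Your factor $g(u)$ carries no such frequency localization, so the argument as written cannot close.
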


For sufficiently smooth $F$, say $F\in C^{15}$, the existence of a local solution of (\ref{KdV0})
for $s\geq 1$ and the global well-posedness of (\ref{KdV0}) for small data $\phi\in H^s$ with $s>3/2$
were proved by Bourgain in \cite{B3}.   
The index $1/2$ is sharp because the ill-posedness of (\ref{gKdV}) for $s<1/2$ is known (see \cite{Tao}). 
In order to make (\ref{KdV0}) well-posed for 
the initial data $\phi\in H^s$ with $s>1/2$, 
the sharp regularity condition for $F$ perhaps is $C^4$. 
But the method utilized in this paper, with a small modification,  seems to be only able to reach an affirmative result for $F\in C^{\frac92+}$ and $s>1/2$. 
Moreover, the endpoint $s=1/2$ case could be possibly done by combining the ideas from \cite{Tao}
and this paper. But we would not pursue this endpoint result in this paper.  \\

\section{Proof of Theorem \ref{thm1}}
\setcounter{equation}0

To prove Theorem {\ref{thm1}}, we need to introduce a level set.
Since $\sqrt{A_{p,N}}\sim K_{p,N}$, it suffices to prove the Strichartz estimates (\ref{Stri}). 
Let $F_N $ be a periodic function on $\mathbb T^2$ given by
\begin{equation}\label{defofF0}
 F_N(x, t) = \sum_{n=-N}^N a_{n} e^{2\pi i nx} e^{2\pi i n^3 t}\,,
\end{equation}
where $\{a_n\}$ is a sequence with $\sum_n |a_n|^2 =1$ and $(x, t)\in \mathbb T^2$. For any 
$ \lambda>0 $, set a level set $E_\lambda$ to be
\begin{equation}\label{defofElam}
E_\lambda = \left\{ (x, t)\in \mathbb T^{2}: |F_N(x, t)|>\lambda \right\}\,.
\end{equation}
To obtain the desired estimate for the level set, let us first state a lemma 
on Weyl's sums. 

\begin{lemma}\label{lem1}
Suppose that $ t\in \mathbb T$ satisfies $|t-a/q|\leq 1/q^2$, where 
$a $ and $q$ are relatively prime. Then if $q\geq N^2$, 
\begin{equation}\label{cubic}
\left| \sum_{n=1}^N e^{2\pi i( tn^3 + bn^2 + cn)}\right|\leq 
CN^{\frac{1}{4}+\e}q^{\frac{1}{4}}\,. 
\end{equation}
Here $b$ and $c$ are real numbers, and the constant $C$ is independent of $b$, $c$, $t$, $a$, $q$ and $N$. 
\end{lemma}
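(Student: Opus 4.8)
The plan is to prove the bound (\ref{cubic}) by two successive rounds of Weyl differencing. One could instead simply invoke the classical cubic Weyl inequality for the polynomial $P(n)=tn^3+bn^2+cn$, whose leading (cubic) coefficient is $t$: it yields $\bigl|\sum_{n=1}^N e^{2\pi i P(n)}\bigr|\ll_\e N^{1+\e}(q^{-1}+N^{-1}+qN^{-3})^{1/4}$, and the hypothesis $q\ge N^2$ forces $qN^{-3}$ to dominate the other two terms, producing exactly $N^{1/4+\e}q^{1/4}$. For a self-contained argument I would proceed as follows; write $e(y):=e^{2\pi iy}$ and $S:=\sum_{n=1}^N e(P(n))$.

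First I would difference once. Expanding the square gives $|S|^2=\sum_{|h|<N}\sum_{n\in I_h}e(P(n+h)-P(n))$, where each $I_h$ is an interval of length at most $N$, and a direct computation gives $P(n+h)-P(n)=3th\,n^2+(3th^2+2bh)n+\textup{const}$, a quadratic in $n$ with leading coefficient $3th$. Isolating the diagonal $h=0$ (which contributes $N$), writing $T_h$ for the inner sum, using $|T_{-h}|=|T_h|$, and applying Cauchy--Schwarz in $h$, one gets $|S|^2\le N+2N^{1/2}\bigl(\sum_{h=1}^{N-1}|T_h|^2\bigr)^{1/2}$. Differencing each quadratic sum $T_h$ the same way and summing the resulting geometric series gives $|T_h|^2\le\sum_{|k|<N}\min(N,\|6thk\|^{-1})$, the diagonal $k=0$ again contributing $N$ (here $\|\cdot\|$ denotes distance to $\ZZ$). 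Summing over $h$ and grouping the off-diagonal terms by the value $l=hk\in[1,(N-1)^2]$, attained by at most $\tau(l)\ll_\e N^\e$ pairs $(h,k)$, I obtain $\sum_{h=1}^{N-1}|T_h|^2\ll N^2+N^\e\sum_{l=1}^{N^2}\min(N,\|6tl\|^{-1})$.

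It then remains to apply the standard estimate $\sum_{l\le V}\min(N,\|\al l\|^{-1})\ll(V/q_1+1)(N+q_1\log q_1)$, valid whenever $\al$ admits a Dirichlet approximation with denominator $q_1$ --- proved by cutting $[1,V]$ into blocks of length $q_1$, on each of which $\|\al l\|$ runs, up to an error $O(q_1^{-1})$, through a permutation of $0,q_1^{-1},2q_1^{-1},\dots$. The hypothesis $|t-a/q|\le q^{-2}$ with $(a,q)=1$ produces such an approximation for $\al=6t$ with $q_1\asymp q$ (the factor $6$ only affects implied constants). Taking $V=(N-1)^2\le N^2\le q\asymp q_1$ makes $V/q_1+1=O(1)$ and $N+q_1\log q_1\ll qN^\e$, so $\sum_{h=1}^{N-1}|T_h|^2\ll N^2+qN^\e\ll qN^\e$, using $q\ge N^2$ once more. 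Substituting back, $|S|^2\ll N+N^{1/2+\e}q^{1/2}\ll N^{1/2+\e}q^{1/2}$ since $q^{1/2}\ge N$, hence $|S|\ll N^{1/4+\e}q^{1/4}$, which is (\ref{cubic}) after relabelling $\e$.

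There is no serious obstacle here --- this is a standard Weyl-sum manipulation --- but the steps needing care are: reading off the leading coefficients $3th$ and $6thk$ of the differenced polynomials correctly, and checking that the shifted summation ranges stay of length $\le N$; the divisor-function multiplicity bound for the representations $l=hk$; and the bookkeeping of the constant $6$ in the Diophantine approximation of $6t$. The one genuinely essential use of the hypothesis $q\ge N^2$ is in the last paragraph, where it both trivializes the factor $V/q_1+1$ in the $\min$-sum estimate and lets the term $N^{1/2+\e}q^{1/2}$ absorb the diagonal contribution $N$; dropping it, the same computation recovers only the full Weyl bound $N^{1+\e}(q^{-1}+N^{-1}+qN^{-3})^{1/4}$.
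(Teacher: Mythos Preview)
Your argument is correct and is exactly the Weyl squaring/differencing method that the paper invokes (without spelling out details) by citing Hua and Montgomery. One small bookkeeping point: the step $q_1\log q_1\ll qN^\e$ tacitly uses $\log q\ll_\e N^\e$, which is justified by first disposing of the range $q\ge N^3$ via the trivial bound $|S|\le N\le N^{1/4}q^{1/4}$.
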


The proof of Lemma \ref{lem1} relies on Weyl's squaring method. See \cite{Hua} or \cite{M} for detail. 
Also we need the following lemma proved in \cite{B1}.

\begin{lemma}\label{lem2}
For any integer $Q\geq 1$ and any integer $n\neq 0$, and any $\varepsilon>0$, 
$$\sum_{ Q\leq q < 2Q}\left|\sum_{a\in\mathcal{P}_q}e^{2\pi i\frac{a}{q}n}\right|\leq C_\varepsilon
 d(n, Q) Q^{1+\varepsilon}\,.$$
Here $\mathcal P_q $ is given by
\begin{equation}
 \mathcal P_q= \{a\in \mathbb N: 1\leq a\leq q \,\,\, {\rm and}\,\,\, (a,q)=1\}
\end{equation}
and $d(n, Q)$ denotes the number of divisors of $n$ less than $Q$ and $C_\varepsilon$ is a constant independent of
$Q, n$. 
\end{lemma}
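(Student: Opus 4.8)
The plan is to recognize the inner exponential sum as a Ramanujan sum and then reduce the whole estimate to an elementary divisor count. Writing $e(\theta)=e^{2\pi i\theta}$, the sum $\sum_{a\in\mathcal P_q}e(an/q)$ is exactly the Ramanujan sum $c_q(n)$. First I would recall the standard facts about it: $c_q(n)$ is multiplicative in $q$, it obeys the identity $c_q(n)=\sum_{d\mid\gcd(n,q)}\mu(q/d)\,d$, and hence $|c_q(n)|\le\gcd(n,q)$ (equivalently $c_q(n)=\mu(q/e)\,\phi(q)/\phi(q/e)$ with $e=\gcd(n,q)$, so in particular $c_q(n)=0$ unless $q/\gcd(n,q)$ is squarefree — a refinement we will not need). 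Only the bound $|c_q(n)|\le\gcd(n,q)$ is used, which gives
\begin{equation*}
\sum_{Q\le q<2Q}\Bigl|\sum_{a\in\mathcal P_q}e\bigl(\tfrac{a}{q}n\bigr)\Bigr|\ \le\ \sum_{Q\le q<2Q}\gcd(n,q).
\end{equation*}

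Next I would bound the right-hand side by interchanging the order of summation. From $\sum_{d\mid m}\phi(d)=m$ applied to $m=\gcd(n,q)$ we get $\gcd(n,q)=\sum_{d\mid n,\ d\mid q}\phi(d)$, and therefore
\begin{equation*}
\sum_{Q\le q<2Q}\gcd(n,q)=\sum_{d\mid n}\phi(d)\,\#\{\,q:\ Q\le q<2Q,\ d\mid q\,\}.
\end{equation*}
The inner count is at most $Q/d+1$ and vanishes unless $d<2Q$; hence each summand is at most $\phi(d)(Q/d+1)\le Q+\phi(d)\le 3Q$. Summing over the (at most $d(n,2Q)$) divisors $d$ of $n$ below $2Q$ yields $\sum_{Q\le q<2Q}\gcd(n,q)\le 3Q\,d(n,2Q)$, which is the asserted bound: the margin in $Q^{1+\varepsilon}$ absorbs the ``$+1$'' terms, while the passage from $d(n,2Q)$ to $d(n,Q)$ costs nothing in the applications (alternatively one simply uses $d(n,2Q)\le d(n)\le C_\varepsilon|n|^{\varepsilon}$, which suffices everywhere Lemma~\ref{lem2} is invoked).

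I do not anticipate a real obstacle here: the entire content is the identification with $c_q(n)$ together with the elementary inequality $|c_q(n)|\le\gcd(n,q)$, after which the argument is a routine reordering of sums and a count of multiples in a dyadic block. The one point warranting a little care is the final bookkeeping — making sure the $\phi(d)\,Q/d$ contributions total $O(Q)$ per divisor rather than accumulating a stray logarithm — and this is precisely what the $\varepsilon$ of room in the exponent is designed to handle.
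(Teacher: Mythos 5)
Your argument is correct, and it is in fact more complete than what the paper offers: the paper only sketches a proof (attributing the lemma to Bourgain's paper \cite{B1}), namely that $q\mapsto c_q(n)=\sum_{a\in\mathcal P_q}e^{2\pi i an/q}$ is multiplicative, so one can evaluate/estimate it on prime powers and multiply up. Your route replaces that multiplicativity-plus-prime-factorization step by the single clean inequality $|c_q(n)|\le\gcd(n,q)$ (which you justify correctly via H\"older's formula $c_q(n)=\mu(q/e)\phi(q)/\phi(q/e)$, $e=\gcd(n,q)$, together with $\phi(q)\le e\,\phi(q/e)$), followed by the interchange $\gcd(n,q)=\sum_{d\mid n,\ d\mid q}\phi(d)$ and a count of multiples of $d$ in the dyadic block; the bookkeeping $\phi(d)(Q/d+1)\le 3Q$ is right. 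Both proofs are elementary Ramanujan-sum arguments, but yours is self-contained and quantitative without any case analysis over prime powers. One point worth flagging explicitly: your method yields $3Q\,d(n,2Q)$, i.e.\ it counts divisors of $n$ up to $2Q$, and this cannot be converted into the paper's literal bound with $d(n,Q)$ (divisors strictly below $Q$) — indeed, taking $n$ to be a product of many primes lying in $[Q,2Q)$ shows the left-hand side can be of size $Q^2/\log Q$ while $d(n,Q)=1$, so the lemma as stated is slightly misformulated and should read $d(n,2Q)$ (or simply $d(n)$). This is a defect of the statement rather than of your proof; as you note, in the only place the lemma is used (the bound (\ref{Fest1}) for $\widehat\Phi(k)$, where the rapid decay of $\mathcal F_{\mathbb R}\varphi(k/q^2)$ restricts attention to $|k|\lesssim Q^2\le N^6$) one only needs $d(n,2Q)\le d(n)\le C_\varepsilon|n|^\varepsilon\le C_\varepsilon N^{C\varepsilon}$, so your version fully supports the application.
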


Lemma \ref{lem2} can be proved by observing that the arithmetic function defined by 
$ f(q)=\sum_{a\in\mathcal{P}_q}e^{2\pi i\frac{a}{q}n}$ is multiplicative, and then utilize the prime 
factorization for $q$ to conclude the lemma.  \\

\begin{proposition}\label{Prop1}
Let $K_N$ be a kernel defined by
\begin{equation}\label{defofKN}
 K_{N}(x, t) =\sum_{n=-N}^N e^{2\pi i t n^3 + 2\pi i x n}\,. 
\end{equation} 
For any given positive number $Q$ with $N^2\leq Q\leq N^3$,
the kernel $K_N$  can be decomposed into $K_{1, Q} + K_{2, Q}$ such that 
\begin{equation}\label{K1}
\|K_{1, Q}\|_\infty \leq C_1 N^{\frac{1}{4}+\e}Q^{1/4}\,.
\end{equation}
and
\begin{equation}\label{K2}
\|\widehat{K_{2, Q}}\|_{\infty} \leq \frac{C_2 N^\varepsilon }{Q}\,.
\end{equation}
Here the constants $C_1, C_2$ are independent of $Q$ and $N$. 
\end{proposition}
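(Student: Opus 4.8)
The plan is to decompose the kernel $K_N$ via a Hardy--Littlewood circle-method dissection of the $t$-variable, keeping the dual variable tied to the arithmetic sum over denominators near $Q$. First I would recall that for each $t \in \mathbb T$ Dirichlet's theorem produces $a/q$ with $(a,q)=1$, $1\le q$, and $|t-a/q|\le 1/(qN^2)$ (say, with the Farey parameter chosen at scale $N^2$ rather than $q^2$ so that the approximation is genuinely accurate on the relevant scale). Grouping the $t$'s according to the size of their denominator $q$, the \emph{minor arcs} are those $t$ whose denominator satisfies $q \ge N^2$; on these, Lemma \ref{lem1} (applied with $b=c=0$, and after splitting $\sum_{n=-N}^N$ into $\sum_{n=1}^N$ plus its conjugate plus the $n=0$ term) gives $|K_N(x,t)| \le CN^{1/4+\e}q^{1/4}$. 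I would restrict this further to the dyadic window $q \in [Q, 2Q)$ — here the hypothesis $N^2 \le Q \le N^3$ guarantees these denominators are admissible — and \emph{define} $K_{1,Q}(x,t)$ to be $K_N(x,t)$ times the indicator of this set of $t$'s. Then $\|K_{1,Q}\|_\infty \le CN^{1/4+\e}(2Q)^{1/4} \le C_1 N^{1/4+\e}Q^{1/4}$, which is exactly \eqref{K1}, since for $t$ outside the window $K_{1,Q}$ vanishes and inside it the pointwise bound applies uniformly in $x$.

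Next I would set $K_{2,Q} = K_N - K_{1,Q}$ and estimate $\widehat{K_{2,Q}}$. Because $K_N$ is (up to the trivial $n=0$ contribution and the symmetric tail) a sum of modulated exponentials supported on the frequencies $(n,n^3)$ with $|n|\le N$, its Fourier coefficients are a priori either $0$ or $1$; subtracting $K_{1,Q}$ smears this out. The point is that $K_{2,Q}(x,t)=K_N(x,t)\,\chi(t)$ where $\chi$ is the indicator of the set where $K_N$ was \emph{not} already handled, i.e. the union of major arcs around fractions $a/q$ with $q<Q$ together with the arcs around fractions with $q\ge 2Q$. One then writes $\widehat{K_{2,Q}}(\xi_1,\xi_2)=\sum_{|n|\le N}\widehat{\chi}(\xi_2 - n^3)\,[\![\xi_1=n]\!]$ (schematically), so that the $\ell^\infty$ norm in $\xi$ reduces to controlling, for each fixed $n$, the Fourier coefficient of $\chi$ at $\xi_2-n^3$. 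Here is where Lemma \ref{lem2} enters: the Fourier transform of the union of minor/major arcs, written through the Farey dissection, produces sums of the shape $\sum_{q}\big(\sum_{a\in\mathcal P_q}e^{2\pi i (a/q) m}\big)\cdot(\text{length of arc})\cdot(\text{phase})$, and Lemma \ref{lem2} bounds $\sum_{Q\le q<2Q}|\sum_{a\in\mathcal P_q}e^{2\pi i (a/q)m}|$ by $C_\e d(m,Q)Q^{1+\e}$. Since each arc at denominator $q$ has length $\asymp 1/(qN^2)$ and there are $\asymp q$ numerators, summing the arc lengths over $q$ in a dyadic block $[Q,2Q)$ contributes a factor $\asymp 1/(QN^2)$ per unit of the exponential-sum bound; combined with $d(m,Q)\le C_\e N^\e$ (divisor bound, since $|m|\lesssim N^3$) and a dyadic sum over blocks $q\ge Q$ that is dominated by its first term, this yields $|\widehat{K_{2,Q}}(\xi)|\le C_2 N^\e/Q$, which is \eqref{K2}.

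I expect the main obstacle to be the bookkeeping in the second half: making precise exactly which arcs $K_{2,Q}$ ``sees'' and verifying that the Fourier transform of the corresponding indicator (or a smooth majorant of it) really does collapse to the Gauss-type sums to which Lemma \ref{lem2} applies, with the arc-length factor $1/(qN^2)$ appearing with the right power. In particular one must be careful that the contribution of \emph{large} denominators $q\gg Q$ — where the arcs are shorter but more numerous — is summable and dominated by the $q\asymp Q$ block; this is where the restriction $Q\le N^3$ is used, so that the Farey dissection at level $N^2$ (or $N^3$) is the natural one and no denominators are double-counted. A secondary technical point is handling the $n=0$ mode and the passage from $\sum_{n=-N}^N$ to $\sum_{n=1}^N$ in Lemma \ref{lem1}; these contribute only $O(1)$ or a harmless symmetric term and do not affect either estimate. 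Everything else — the dyadic decomposition in $q$, the divisor bound, the uniform-in-$x$ nature of \eqref{K1} — is routine once the arc structure is pinned down.
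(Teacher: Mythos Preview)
Your decomposition has a fatal flaw on the diagonal. With $K_{1,Q}=K_N\,\mathbf 1_E$ and $K_{2,Q}=K_N\,(1-\mathbf 1_E)$ one computes, for $|n|\le N$,
\[
\widehat{K_{2,Q}}(n,n^3)=\widehat{(1-\mathbf 1_E)}(0)=1-|E|\,,
\]
and the measure $|E|$ of the arcs with denominator in $[Q,2Q)$ (at whatever Dirichlet scale you fix) is a constant bounded away from $1$ throughout the range $N^2\le Q\le N^3$. Thus $\|\widehat{K_{2,Q}}\|_\infty\gtrsim 1$, which is far larger than the claimed $N^\e/Q$. Your off-diagonal bookkeeping via Lemma~\ref{lem2} is irrelevant here: the failure occurs at the zero Fourier mode of the time cutoff, and no Ramanujan-sum cancellation is available there. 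A related inconsistency is that Dirichlet at parameter $N^2$ produces only $q\le N^2$, so no $t$ can land in a block $[Q,2Q)\subset[N^2,N^3]$ under your dissection as written.

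The paper's remedy is a normalization you are missing. One builds a \emph{smooth} multiplier $\Phi(t)=\sum_{Q\le q\le 5Q}\sum_{a\in\mathcal P_q}\varphi\big((t-a/q)q^2\big)$ with $\varphi$ a bump supported in $[1/200,1/100]$, and sets $K_{1,Q}=\widehat{\Phi}(0)^{-1}K_N\Phi$. Dividing by $\widehat{\Phi}(0)$ forces $\widehat{K_{2,Q}}(n,n^3)=0$ exactly, killing the diagonal. For this division not to destroy \eqref{K1} one needs $\widehat{\Phi}(0)$ bounded below uniformly in $Q$, which is why the arcs are taken at scale $1/q^2$ (giving $\widehat{\Phi}(0)=c\sum_{q\sim Q}\phi(q)/q^2\sim\text{const}$) rather than at your Dirichlet scale $1/(qN^2)$; the support of $\varphi$ away from the origin keeps these larger arcs pairwise disjoint. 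The off-diagonal coefficients are then $-\widehat{\Phi}(0)^{-1}\widehat{\Phi}(n_2-n_1^3)$, and a single application of Lemma~\ref{lem2} to the one block $q\sim Q$ yields \eqref{K2}. There is no summation over major arcs $q<Q$ or over further dyadic blocks $q\ge 2Q$, which is precisely where your sketch becomes vague.
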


\begin{proof}
We can assume that $Q$ is an integer, since  otherwise we can take the integer part of $Q$. 
For a standard bump function $\varphi$ supported on $[1/200, 1/100]$, we set 
\begin{equation}\label{defofPhi}
\Phi(t) = \sum_{ Q\leq q  \leq 5Q}\sum_{a\in\mathcal P_q}\varphi\left(\frac{t-a/q}{1/q^2}\right)\,.
\end{equation}
Clearly $\Phi$ is supported on $[0,1]$. We can extend $\Phi$ to other intervals periodically to obtain 
a periodic function on $\mathbb T$. For this periodic function generated by $\Phi$, we still use $\Phi$ to
denote it.  Then it is easy to see that
\begin{equation}
\widehat{\Phi}(0)=\sum_{q\sim Q}\sum_{a\in\mathcal{P}_q}\frac{\mathcal{F}_{\mathbb R}{\varphi}(0)}{q^2}=\sum_{q\sim Q}\frac{\phi(q)}{q^2}\mathcal{F}_{\mathbb R}{\varphi}(0) \,
\end{equation}
is a constant independent of $Q$. Here $\phi$ is Euler phi function, and $\mathcal{F}_{\mathbb R}$ denotes Fourier transform of a function on $\mathbb R$. Also we have
\begin{equation}\label{Fest}
\widehat{\Phi}(k)= \sum_{q\sim Q}\sum_{a\in\mathcal P_q}\frac{1}{q^2} e^{-2\pi i \frac{a}{q}k} \mathcal F_{\mathbb R}
\varphi(k/q^2)\,. 
\end{equation}
Applying Lemma \ref{lem2} and the fact that $Q\leq N^3$, we obtain
\begin{equation}\label{Fest1}
\left |\widehat{\Phi}(k)\right|\leq \frac{N^\e}{Q}\,, 
\end{equation}
if $k\neq 0$. 

We now define that 
$$
K_{1, Q}(x, t)=\frac{1}{\widehat{\Phi}(0)}K_N(x, t)\Phi(t), \,\,\,{\rm and}\,\,\, 
K_{2, Q} =  K_N - K_{1, Q}\,. 
$$

(\ref{K1}) follows immediately from Lemma \ref{lem1} since intervals $J_{a/q}=
[\frac{a}{q} + \frac{1}{100q^2}, \frac{a}{q} + \frac{1}{50q^2}]$'s  are pairwise disjoint
for all $Q\leq q\leq 5Q$ and $a\in\mathcal P_q$.

We now prove (\ref{K2}).  In fact, represent $\Phi$ as its Fourier series to get
$$
K_{2, Q}(x, t) = - \frac{1}{\widehat\Phi(0)} \sum_{k\neq 0}\widehat\Phi(k) e^{2\pi i k t} K_N(x, t)\,.
$$
Thus its Fourier coefficient is
$$
\widehat{K_{2, Q}}(n_1, n_2)= -\frac{1}{\widehat\Phi(0)} \sum_{k\neq 0} 
  \widehat\Phi(k){\bf 1}_{\{ n_2 = n_1^3 + k\} } (k) 
 \,.
$$
Here $(n_1, n_2)\in \mathbb Z^2$ and ${\bf 1}_A$ is the indicator function of a measurable set $A$.
This implies that 
$\widehat{K_{2, Q}}(n_1, n_2) =0$ if $ n_2 = n_1^3 $, and if $n_2\neq n_1^3$, 
$$  \widehat{K_{2, Q}}(n_1, n_2) =  -\frac{1}{\widehat\Phi(0)}
\widehat\Phi(n_2- n_1^3) \,.$$
Applying (\ref{Fest1}), we estimate $\widehat{K_{2, Q}}(n_1, n_2)$ by
$$
 \left|\widehat{K_{2, Q}}(n_1, n_2)\right| \leq \frac{CN^\varepsilon}{Q}\,,
$$
since $N^2\leq Q\leq N^3$. Henceforth we obtain (\ref{K2}). Therefore we complete the proof. 

\end{proof}

Now we can state our theorem on the level set estimates.  

\begin{theorem}\label{thm2}
For any positive numbers $\e$ and $Q\geq N^2$, the level set defined as in (\ref{defofElam}) satisfies 
\begin{equation}\label{estE}
\lambda^2 \left| E_\lambda\right|^2 \leq C_1 N^{\frac{1}{4}+\e}Q^{\frac{1}{4}}
\left| E_\lambda\right|^2 + \frac{C_2N^\e}{Q}\left| E_\lambda\right|\,
\end{equation}
for all $\lambda>0$.  Here $C_1$ and $C_2$ are constants independent of $N$ and $Q$.  
\end{theorem}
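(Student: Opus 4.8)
The plan is to run a duality ($TT^*$) argument for the level set $E_\la$ and then feed in the kernel decomposition of Proposition \ref{Prop1}. First I would dispose of the case $Q>N^3$, which is trivial: every $(x,t)\in E_\la$ satisfies $\la^2<|F_N(x,t)|^2\le(2N+1)\sum_n|a_n|^2=2N+1$ by Cauchy--Schwarz, and (enlarging $C_1$ if necessary) $2N+1\le C_1N^{\frac14+\e}Q^{1/4}$, so (\ref{estE}) holds. Hence we may assume $N^2\le Q\le N^3$, so that Proposition \ref{Prop1} applies. Now fix $\la>0$ and set $g={\bf 1}_{E_\la}F_N/|F_N|$ on $\{F_N\neq0\}$ and $g=0$ elsewhere; since $|F_N|>\la>0$ on $E_\la$ this is well defined, $|g|={\bf 1}_{E_\la}$, and therefore $\|g\|_{L^1(\mathbb T^2)}=\|g\|_{L^2(\mathbb T^2)}^2=|E_\la|$ while $F_N\overline g={\bf 1}_{E_\la}|F_N|$. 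Writing $\langle u,v\rangle=\int_{\mathbb T^2}u\overline v$, on one hand $\langle F_N,g\rangle=\int_{E_\la}|F_N|\ge\la|E_\la|$, and on the other, expanding $F_N$ by (\ref{defofF0}) and using Cauchy--Schwarz together with $\sum_n|a_n|^2=1$,
$$
|\langle F_N,g\rangle|=\Big|\sum_{n=-N}^N a_n\,\overline{\wh g(n,n^3)}\Big|\le\Big(\sum_{n=-N}^N|\wh g(n,n^3)|^2\Big)^{1/2}.
$$
Combining the two displays yields $\la^2|E_\la|^2\le\sum_{n=-N}^N|\wh g(n,n^3)|^2$, and it remains to bound the right-hand side.

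The key point is that this sum is a pairing with the kernel $K_N$ of (\ref{defofKN}): since $\wh{K_N}(n_1,n_2)$ equals $1$ for $(n_1,n_2)=(n,n^3)$ with $|n|\le N$ and $0$ otherwise, Parseval gives $\sum_{n=-N}^N|\wh g(n,n^3)|^2=\langle g*K_N,g\rangle$. I would then decompose $K_N=K_{1,Q}+K_{2,Q}$ as in Proposition \ref{Prop1} and split $\langle g*K_N,g\rangle=\langle g*K_{1,Q},g\rangle+\langle g*K_{2,Q},g\rangle$. For the first piece, Young's inequality and (\ref{K1}) give
$$
|\langle g*K_{1,Q},g\rangle|\le\|g*K_{1,Q}\|_\infty\|g\|_1\le\|K_{1,Q}\|_\infty\|g\|_1^2\le C_1N^{\frac14+\e}Q^{1/4}|E_\la|^2;
$$
for the second piece, Parseval again and (\ref{K2}) give
$$
|\langle g*K_{2,Q},g\rangle|=\Big|\sum_{(n_1,n_2)\in\ZZ^2}\wh{K_{2,Q}}(n_1,n_2)\,|\wh g(n_1,n_2)|^2\Big|\le\|\wh{K_{2,Q}}\|_\infty\|g\|_2^2\le\frac{C_2N^\e}{Q}|E_\la|.
$$
Adding these two bounds is exactly (\ref{estE}).

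I do not expect a genuine obstacle at this stage: once Proposition \ref{Prop1} is available the argument is essentially mechanical, all the arithmetic content having been spent in Lemma \ref{lem1}, Lemma \ref{lem2} and Proposition \ref{Prop1}. The two places that want a little care are the choice of the unimodular multiplier $g$, which is what makes $\langle F_N,g\rangle\ge\la|E_\la|$ hold with no loss, and the bookkeeping of norms: the $L^1$ norm of $g$ is paired against $K_{1,Q}$ (via Young's inequality) while the $L^2$ norm is paired against $K_{2,Q}$ (via Parseval), and this asymmetry is precisely what produces the two different powers $|E_\la|^2$ and $|E_\la|$ on the right-hand side of (\ref{estE}).
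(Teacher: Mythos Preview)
Your proposal is correct and follows the same $TT^*$ argument as the paper: the unimodular function $g$ you define is exactly the paper's $f$, and the kernel decomposition from Proposition \ref{Prop1} is applied in the same way, with Young's inequality/(\ref{K1}) bounding the $K_{1,Q}$ piece by $|E_\la|^2$ and Parseval/(\ref{K2}) bounding the $K_{2,Q}$ piece by $|E_\la|$. Your disposal of the range $Q>N^3$ via the trivial Cauchy--Schwarz bound $|F_N|^2\le 2N+1$ is equivalent to the paper's remark that $E_\la=\emptyset$ for $\la\ge CN^{1/2}$.
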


\begin{proof}
Notice that if $Q\geq N^3$, (\ref{estE}) becomes trivial since $E_\lambda=\emptyset$ if $\lambda
\geq CN^{1/2}$. So we can assume that $N^2\leq Q\leq N^3$. 
For the function $F_N$ and the level set $E_\lambda $ given in (\ref{defofF0}) and (\ref{defofElam}) respectively, we define $f$ to be
$$
f(x, t) = \frac{{F_N(x, t)}}{ |F_N(x, t)|} {\bf 1}_{E_\lambda}(x, t)\,. 
$$
Clearly
$$
\lambda|E_\lambda|\leq \int_{\mathbb T^2} \overline{F_N(x, t)} f(x, t) dx dt\,.  
$$
By the definition of $F_N$, we get
$$
\lambda|E_\lambda|\leq \sum_{n=-N}^N\overline{a_n} \widehat{f}(n, n^3 )\,.
$$
Utilizing Cauchy-Schwarz's inequality, we have
$$
\lambda^2|E_\lambda|^2\leq \sum_{n=-N}^N\left| \widehat{f}(n, n^3)\right|^2 \,.
$$
The right hand side can be written as
\begin{equation}
 \langle K_N *f, f\rangle\,. 
\end{equation}
For any $Q$ with $N^2\leq Q\leq N^3$, we employ Proposition \ref{Prop1} to decompose the kernel $K_N$. We then have 
\begin{equation}
\lambda^2|E_\lambda|^2\leq \left| \langle K_{1, Q} *f, f\rangle \right| 
   +  \left| \langle K_{2, Q} *f, f\rangle \right|  \,
\end{equation}
From (\ref{K1}) and (\ref{K2}), we then obtain 
$$
\lambda^2|E_\lambda|^2\leq C_1 N^{\frac{1}{4}+\e}Q^{\frac14}\|f\|_1^2 + \frac{C_2 N^\varepsilon}{Q}\|f\|_2^2
  \leq C_1 N^{\frac{1}{4}+\e}Q^{\frac14}|E_\lambda|^2 + \frac{C_2 N^\varepsilon}{Q}|E_\lambda|\,,
$$
as desired. Therefore, we finish the proof of Theorem \ref{thm2}. 
\end{proof}

\begin{corollary}\label{cor1}
If $\lambda\geq 2C_1N^{\frac{3}{8}+\e}$, then 
\begin{equation}\label{Eest2}
|E_\lambda|\leq \frac{CN^{1+\e}}{\lambda^{10}}\,. 
\end{equation}
Here $C_1$ is the constant $C_1$ in Theorem \ref{thm2} and $C$ is a constant independent of $N$ and $\lambda$.  
\end{corollary}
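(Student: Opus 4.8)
The plan is to apply Theorem \ref{thm2} with a single, carefully chosen value of $Q$, picked large enough that the first term on the right-hand side of \eqref{estE} gets absorbed into the left-hand side; the surviving term then gives the claimed bound on $|E_\lambda|$ directly.

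First I would dispose of the trivial case $|E_\lambda|=0$. Assuming $|E_\lambda|>0$, the idea is to choose $Q$ so that $C_1 N^{\frac14+\e}Q^{\frac14}=\tfrac12\lambda^2$, i.e.
\[
 Q=\frac{\lambda^{8}}{16\,C_1^{4}\,N^{1+4\e}}\,.
\]
For Theorem \ref{thm2} to apply we only need $Q\geq N^2$ (the case $Q\geq N^3$ being covered there as well), and this is precisely where the hypothesis $\lambda\geq 2C_1 N^{\frac38+\e}$ is used: it gives $\lambda^{8}\geq 256\,C_1^{8}N^{3+8\e}$, whence $Q\geq 16\,C_1^{4}N^{2+4\e}\geq N^{2}$ (one may assume $C_1\geq 1$ without loss of generality, since the constant in Theorem \ref{thm2} may only be enlarged). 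With this choice, \eqref{estE} reads
\[
 \lambda^{2}|E_\lambda|^{2}\leq \tfrac12\lambda^{2}|E_\lambda|^{2}+\frac{C_2 N^{\e}}{Q}|E_\lambda|\,,
\]
so that $\tfrac12\lambda^{2}|E_\lambda|^{2}\leq \dfrac{C_2 N^{\e}}{Q}|E_\lambda|$.

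Dividing by $|E_\lambda|$ and substituting the value of $Q$ then yields
\[
 |E_\lambda|\leq \frac{2C_2 N^{\e}}{\lambda^{2}Q}=\frac{32\,C_1^{4}C_2\,N^{1+5\e}}{\lambda^{10}}\,,
\]
and since $\e>0$ was arbitrary, relabeling $5\e$ as $\e$ produces \eqref{Eest2}. There is no analytic obstacle in this argument; the only points that require care are the bookkeeping that certifies the admissibility constraint $Q\geq N^2$ under the stated lower bound on $\lambda$, and the (harmless) adjustment of the $\e$-loss at the end. If anything is ``the hard part,'' it is merely keeping track of the powers of $N$ so that the exponent $10$ on $\lambda$ and the exponent $1$ on $N$ come out exactly as claimed.
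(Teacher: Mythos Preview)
Your proof is correct and follows exactly the paper's approach: choose $Q$ so that $2C_1N^{\frac14+\e}Q^{1/4}=\lambda^2$ (equivalently $C_1N^{\frac14+\e}Q^{1/4}=\tfrac12\lambda^2$), absorb the first term of \eqref{estE} into the left, and read off the bound. You have simply filled in the bookkeeping (checking $Q\geq N^2$ and tracking the powers) that the paper omits.
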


\begin{proof}
Since $\lambda\geq 2C_1N^{\frac{3}{8}+\e}$, we simply take $Q$ satisfies $ 2C_1N^{\frac{1}{4}+\e}Q^{1/4}= \lambda^2  $.
Then Corollary \ref{cor1} follows from Theorem \ref{thm2}. 
\end{proof}

We now are ready to finish the proof of Theorem \ref{thm1}. In fact, let $p\geq 14$ and 
write $\|F\|_p^p$ as
\begin{equation}\label{Fsplit}
 p\int_0^{2C_1N^{\frac{3}{8}+\e}}\lambda^{p-1} |E_\lambda|d\lambda  +
  p\int_{2C_1N^{\frac{3}{8}+\e}}^{2N^{1/2}}\lambda^{p-1} |E_\lambda|d\lambda  \,.  
\end{equation}
Observe that $ A_{6, N}\leq  N^\e $ implies 
\begin{equation}
 |E_\lambda|\leq \frac{N^\e}{\lambda^6}\,.  
\end{equation}
Thus the first term in (\ref{Fsplit}) is bounded by
\begin{equation}
  C N^{\frac{3(p-6)}{8} +\e } \leq CN^{\frac{p}{2}-4 +\e}\,,
\end{equation} 
since $p\geq 14$. 
From (\ref{Eest2}), the second term is majorized  by
\begin{equation}
 C N^{\frac{p}{2}-4+\e}\,.
\end{equation}
Putting both estimates together, we complete the proof of Theorem \ref{thm1}. \\

\section{A Lower bound of $A_{p, N}$}
\setcounter{equation}0

In this section we show that $N^{1-8/p}$ is the best upper bound of $A_{p, N}$ if $p\geq 8$.
Hence (\ref{res-est}) can not be improved substantially, and it is sharp up to a factor of $N^\e$. \\

For $b\in \mathbb N$, let $S(N; b)$ be defined by
\begin{equation}\label{defJNb}
S(N; b)=\int_{\mathbb T^2} \left| \sum_{n=-N}^N e^{2\pi i tn^3 + 2\pi i x n} \right|^{2b} dx dt\,.
\end{equation}

\begin{proposition}\label{prop2}
Let $S(N;b)$ be defined as in (\ref{defJNb}). Then
\begin{equation}\label{lbofJ}
S(N;b)\geq C \left(N^b + N^{2b-4}\right) \,.
\end{equation}
Here $C$ is a constant independent of $N$. 
\end{proposition}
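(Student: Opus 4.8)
The plan is to prove the two contributions $N^b$ and $N^{2b-4}$ to the lower bound for
$$
S(N;b)=\int_{\mathbb T^2}\left|\sum_{n=-N}^N e^{2\pi i tn^3+2\pi i xn}\right|^{2b}dxdt
$$
separately, exploiting the orthogonality of the exponentials $e^{2\pi i(tn^3+xn)}$ on $\mathbb T^2$. Expanding the $2b$-th power, $S(N;b)$ counts the number of solutions of the system
$$
n_1+\cdots+n_b=m_1+\cdots+m_b,\qquad n_1^3+\cdots+n_b^3=m_1^3+\cdots+m_b^3
$$
with all variables in $\{-N,\dots,N\}$. So it suffices to exhibit two families of solutions, one of size $\gtrsim N^b$ and one of size $\gtrsim N^{2b-4}$, and to check there is no severe double counting (any fixed constant overlap is harmless).

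For the first term, I would use the ``diagonal'' solutions: choose $m_1,\dots,m_b\in\{-N,\dots,N\}$ arbitrarily and set $\{n_i\}$ to be a permutation of $\{m_i\}$ (the simplest choice $n_i=m_i$ already works). This gives $\gtrsim N^b$ solutions and hence $S(N;b)\geq c\,N^b$. Equivalently, one can phrase this as the trivial pointwise lower bound: $\int_{\mathbb T^2}|K_N|^{2b}\geq \big(\int_{\mathbb T^2}|K_N|^2\big)^b / (\text{something})$ — actually cleaner is just $S(N;b)\ge \|K_N\|_{L^2}^{2b}\cdot$(normalization) is false in general, so I will stick with the combinatorial diagonal count, which is rigorous and elementary.

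For the second term $N^{2b-4}$, the point is that the curve $n\mapsto(n,n^3)$ contains many genuinely non-diagonal additive quadruples: there are $\gtrsim N^2$ solutions of
$$
n_1+n_2+n_3+n_4=0,\qquad n_1^3+n_2^3+n_3^3+n_4^3=0
$$
in $\{-N,\dots,N\}$, coming from the identity $a+b-a-b=0$ together with the cubic identity that holds for the configuration $\{a,b,-a,-b\}$ — more precisely, using $n^3$'s oddness, $(a)^3+(b)^3+(-a)^3+(-b)^3=0$ automatically, so \emph{every} choice $(n_1,n_2,n_3,n_4)=(a,b,-a,-b)$ with $a,b\in\{-N,\dots,N\}$ works, giving $\gtrsim N^2$ such quadruples. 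Then I take $b-2$ further pairs of the ``diagonal'' type (or pad with $a_i,-a_i,\dots$ appropriately to keep the two power sums balanced) to build $\gtrsim N^2\cdot N^{2(b-2)}\cdot$... — here I must be careful: I want $2b$ variables split as $b$ on each side. Concretely, take on the left side $n_1=a$, $n_2=b$ and $n_3,\dots,n_b$ free in $\{-N,\dots,N\}$; on the right side $m_1=-a$, $m_2=-b$ and $m_j=n_j$ for $j=3,\dots,b$; wait, that does not balance. The correct bookkeeping: among the $2b$ variables choose four of them, say the pair $(n_1,n_2)$ on the left and $(m_1,m_2)$ on the right, to satisfy $n_1+n_2=m_1+m_2$ and $n_1^3+n_2^3=m_1^3+m_2^3$ \emph{with} $\{n_1,n_2\}\neq\{m_1,m_2\}$; the equation $x+y=s,\ x^3+y^3=c$ generically has the single solution pair, so instead I use the exact symmetry: for each $a\in\{-N,\dots,N\}$, the pair $(a,-a)$ on the left and $(0,0)$... no. Let me restart this step in the writeup.

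The clean way, which I will carry out: $S(N;b)\ge \int_{\mathbb T^2}|K_4|^2\,|K_{N}|^{2b-4}\cdot(\text{Hölder reversed})$ does not work directly; instead I will count solutions to $\sum_{i=1}^b n_i=\sum_{i=1}^b m_i$, $\sum n_i^3=\sum m_i^3$ by fixing $n_3=m_3,\dots,n_b=m_b$ (that is $N^{b-2}$ choices, automatically cancelling) and then counting solutions of $n_1+n_2=m_1+m_2$, $n_1^3+n_2^3=m_1^3+m_2^3$. This two-variable system: writing $s=n_1+n_2=m_1+m_2$ and using $n_1^3+n_2^3=s(n_1^2-n_1n_2+n_2^2)=s\big(s^2-3n_1n_2\big)$, we see for each fixed $s$ the product $n_1n_2$ is determined, so $\{n_1,n_2\}=\{m_1,m_2\}$ unless $s=0$, in which case $n_1n_2$ is \emph{un}constrained and we get $\gtrsim N$ pairs $(n_1,-n_1)$ — still only $N$, giving $N^{b-1}$, not enough. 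So the savings must come from a \emph{four}-variable (not two-variable) sub-system: fix $n_5=m_5,\dots,n_b=m_b$ ($N^{b-4}$ choices) and count solutions of the four-variable system $n_1+n_2+n_3+n_4=m_1+m_2+m_3+m_4$ and the cubic analogue; Bourgain's / Vinogradov-type lower bound (or the explicit construction $n_1+n_2-n_3-n_4=0$ with the cubic forcing one extra Diophantine condition defining a curve with $\gtrsim N^2$ points, e.g.\ parametrized rationally) yields $\gtrsim N^2$ solutions, whence $S(N;b)\gtrsim N^{b-4}\cdot N^2\cdot$ — that is only $N^{b-2}$.

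I realize the honest statement is: $S(N;b)$ equals the number of solutions of the \emph{balanced} system with $2b$ variables, and the $N^{2b-4}$ term is the ``expected'' count from the heuristic that the two equations (degrees $1$ and $3$) cut down $N^{2b}$ by $N^{1}\cdot N^{3}=N^{4}$; the corresponding lower bound is obtained by a box/pigeonhole argument: partition the $\gtrsim N^{2b}$ tuples $(n_1,\dots,n_b,m_1,\dots,m_b)$ according to the values $\big(\sum n_i-\sum m_i,\ \sum n_i^3-\sum m_i^3\big)\in\{-bN,\dots,bN\}\times\{-bN^3,\dots,bN^3\}$, a set of $\lesssim N^4$ bins; by Cauchy–Schwarz (or just pigeonhole on the zero bin after a translation/reflection symmetry argument that makes the zero bin the largest), the zero bin has $\gtrsim N^{2b}/N^{4}=N^{2b-4}$ elements, which is exactly the count $S(N;b)$. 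Combining with the diagonal bound $\gtrsim N^b$ from the first paragraph gives $S(N;b)\gtrsim N^b+N^{2b-4}$.

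The main obstacle is making the pigeonhole step give the \emph{zero} bin specifically rather than merely \emph{some} bin: one fixes this by the standard trick of writing $S(N;b)=\int|K_N|^{2b}$ and noting $\int|K_N|^{2b}=\int|\,\widehat{|K_N|^2}\,|^{\,b}$-type manipulations, or more simply by the inequality $S(N;b)\ge \big(\int_{\mathbb T^2}|K_N|^{2}\big)^{2}/\big(\int_{\mathbb T^2}|K_N|^{2}\big)\cdots$ — cleanest is: $|K_N(x,t)|^{2b}\ge$ is not pointwise large, but $S(N;b)=\sum_{\ell,c}r(\ell,c)^2$ where $r(\ell,c)=\#\{(n_1,\dots,n_b): \sum n_i=\ell,\ \sum n_i^3=c\}$ and $\sum_{\ell,c}r(\ell,c)=(2N+1)^b$ with the sum over $\lesssim N^4$ pairs $(\ell,c)$; Cauchy–Schwarz then gives $S(N;b)=\sum r^2\ge \big(\sum r\big)^2/(\#\text{pairs})\gtrsim N^{2b}/N^4=N^{2b-4}$, with no need to single out the zero bin at all. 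I will write the proof in exactly this form:

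\begin{proof}
Expanding the $2b$-th power and integrating over $\mathbb T^2$, orthogonality of $\{e^{2\pi i(tn^3+xn)}\}$ gives
$$
S(N;b)=\#\Big\{(n_1,\dots,n_b,m_1,\dots,m_b)\in\{-N,\dots,N\}^{2b}:\ \sum_{i=1}^b n_i=\sum_{i=1}^b m_i,\ \sum_{i=1}^b n_i^3=\sum_{i=1}^b m_i^3\Big\}.
$$
For each pair $(\ell,c)$ put $r(\ell,c)=\#\{(n_1,\dots,n_b)\in\{-N,\dots,N\}^b:\ \sum n_i=\ell,\ \sum n_i^3=c\}$, so that
$$
S(N;b)=\sum_{\ell,c}r(\ell,c)^2.
$$
First, restricting to the diagonal contribution $m_i=n_i$ already yields $S(N;b)\ge (2N+1)^b\ge c\,N^b$. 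Second, since $\sum_i n_i$ ranges over an interval of length $\le 2bN$ and $\sum_i n_i^3$ over an interval of length $\le 2bN^3$, the sum defining $S(N;b)$ runs over at most $C_b N^{4}$ pairs $(\ell,c)$, while $\sum_{\ell,c}r(\ell,c)=(2N+1)^b$. By the Cauchy--Schwarz inequality,
$$
S(N;b)=\sum_{\ell,c}r(\ell,c)^2\ \ge\ \frac{\big(\sum_{\ell,c}r(\ell,c)\big)^2}{\#\{(\ell,c):r(\ell,c)\neq0\}}\ \ge\ \frac{(2N+1)^{2b}}{C_b N^{4}}\ \ge\ c\,N^{2b-4}.
$$
Adding the two lower bounds gives $S(N;b)\ge c\,(N^b+N^{2b-4})$, as claimed.
\end{proof}
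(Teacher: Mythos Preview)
Your final proof is correct. Although your write-up contains a long exploratory detour, the argument you actually present inside the \texttt{proof} environment is clean and complete: the diagonal count gives $S(N;b)\ge (2N+1)^b$, and the Cauchy--Schwarz/second-moment inequality
\[
S(N;b)=\sum_{\ell,c}r(\ell,c)^2\ \ge\ \frac{\big(\sum_{\ell,c}r(\ell,c)\big)^2}{\#\{(\ell,c):r(\ell,c)\neq 0\}}\ \ge\ \frac{(2N+1)^{2b}}{C_bN^4}
\]
legitimately yields $S(N;b)\gtrsim N^{2b-4}$.

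However, the paper obtains the $N^{2b-4}$ term by a genuinely different (and more analytic) route. Instead of counting solutions, it localizes in physical space: on the box $\Omega=\{(x,t):|x|\le \tfrac{1}{60N},\ |t|\le \tfrac{1}{60N^3}\}$ one has $|tn^3+xn|\le \tfrac{1}{30}$ for every $|n|\le N$, so $\cos\big(2\pi(tn^3+xn)\big)\ge c$ termwise and hence $|K_N(x,t)|\ge cN$ on $\Omega$. Integrating $|K_N|^{2b}$ over $\Omega$ gives $S(N;b)\ge cN^{2b}|\Omega|\sim N^{2b-4}$. Your Cauchy--Schwarz argument is the standard ``mean-value lower bound'' from the circle method and is arguably more robust (it works verbatim for $(n,n^k)$ or any system of polynomials once you know the range of the targets), whereas the paper's pointwise argument is more elementary and makes the source of the bound transparent: the kernel is genuinely of size $N$ on a set of measure $\sim N^{-4}$. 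Either approach is perfectly acceptable here.
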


\begin{proof}
Clearly $S(N;b)$ is equal to the number of solutions of 
\begin{equation}\label{sys}
\begin{cases}
 n_1+\cdots +n_b   =   m_1+\cdots + m_b \\
 n_1^3+\cdots + n_b^3 = m_1^3+\cdots +m_b^3  \,
\end{cases}
\end{equation}
with $n_j, m_j\in \{-N, \cdots, N\}$ for all $j\in\{1, \cdots, b\}$. 
For each $(m_1, \cdots, m_b)$, we may obtain a solution of (\ref{sys}) by taking 
$(n_1, \cdots, n_b)=(m_1, \cdots, m_b)$. Thus 
\begin{equation}\label{J-est}
S(N; b)\geq N^b\,. 
\end{equation}
To derive a further lower bound for $S(N; b)$, we set $\Omega$ to be
\begin{equation}\label{defofOme}
\Omega = \left\{(x, t):  |x|\leq \frac{1}{60 N}\,,\,\,\,\,  |t|\leq \frac{1}{60 N^3}\right\}\,. 
\end{equation}
If $(x, t)\in\Omega$ and $|n|\leq N$, then 
\begin{equation}
 \left| tn^3+xn\right|\leq \frac{1}{30}\,. 
\end{equation}
Henceforth if $(x, t)\in\Omega$, 
\begin{equation}
\left| \sum_{n=-N}^N e^{2\pi i tn^3 + 2\pi i x n} \right|\geq \left| {\rm Re}\sum_{n=-N}^N e^{2\pi i tn^3 + 2\pi i x n} \right| \geq \sum_{n=-N}^{N}\cos \left( 2\pi (tn^3+xn) \right)\geq CN\,.
\end{equation}
Consequently, we have 
\begin{equation}\label{Jest2}
S(N; b)\geq \int_{\Omega}\left| \sum_{n=-N}^N e^{2\pi i tn^3 + 2\pi i x n} \right|^{2b}dx dt\geq 
 CN^{2b}|\Omega|\geq CN^{2b-4}\,. 
\end{equation}
\end{proof}

\begin{proposition}\label{prop3}
Let $p\geq 2$ be even. Then $A_{p, N}$ satisfies 
\begin{equation}\label{LbofA}
  A_{p, N} \geq C(1+ N^{1-\frac{8}{p}})\,.
\end{equation}
Here $C$ is a constant independent of $N$. 
\end{proposition}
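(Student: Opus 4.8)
The plan is to test the periodic Strichartz inequality (\ref{Stri}) against the constant coefficient sequence and then feed in the lower bound of Proposition \ref{prop2}. Since $p$ is even, set $b=p/2\in\mathbb N$; with this choice the quantity $S(N;b)$ of (\ref{defJNb}) is precisely $\int_{\mathbb T^2}|K_N(x,t)|^{p}\,dx\,dt=\|K_N\|_{L^p(\mathbb T^2)}^{p}$, where $K_N$ is the kernel (\ref{defofKN}).

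First I would put $a_n=1$ for every $n\in\{-N,\dots,N\}$ in (\ref{Stri}). The left-hand side becomes $\|K_N\|_{L^p(\mathbb T^2)}=S(N;b)^{1/p}$ and the right-hand side becomes $K_{p,N}(2N+1)^{1/2}$. Raising to the $p$-th power, bounding $(2N+1)^{p/2}\le CN^{p/2}$, and invoking $K_{p,N}\sim\sqrt{A_{p,N}}$, one obtains
\begin{equation}
S(N;b)\le K_{p,N}^{\,p}\,(2N+1)^{p/2}\le C\,A_{p,N}^{\,p/2}\,N^{p/2}.
\end{equation}
Next I would combine this with Proposition \ref{prop2}, which gives $S(N;b)\ge C(N^{b}+N^{2b-4})=C(N^{p/2}+N^{p-4})$. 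Dividing the resulting inequality $C(N^{p/2}+N^{p-4})\le C\,A_{p,N}^{\,p/2}N^{p/2}$ by $N^{p/2}$ yields $A_{p,N}^{\,p/2}\ge C(1+N^{p/2-4})$. Finally, raising to the power $2/p\le 1$ and using the elementary estimate $(1+x)^{2/p}\ge\tfrac12(1+x^{2/p})$ for $x\ge 0$ produces $A_{p,N}\ge C(1+N^{1-8/p})$, which is (\ref{LbofA}).

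I do not expect a genuine obstacle here. The one step deserving care is the bridge at the outset: identifying $S(N;p/2)$ with $\|K_N\|_{L^p(\mathbb T^2)}^{p}$ and observing that plugging the constant sequence into the best-constant inequality (\ref{Stri}) is exactly what converts the abstract constant $A_{p,N}$ into the concrete count of solutions of the Diophantine system (\ref{sys}); after that, Proposition \ref{prop2} and a short exponent computation finish the proof. It is worth noting explicitly that the hypothesis ``$p$ even'' is used only to guarantee that $b=p/2$ is an integer, so that Proposition \ref{prop2} applies with this $b$, and that the two terms $N^{b}$ and $N^{2b-4}$ in Proposition \ref{prop2} are responsible, respectively, for the constant $1$ and the power $N^{1-8/p}$ in (\ref{LbofA}).
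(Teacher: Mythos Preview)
Your proof is correct and follows essentially the same approach as the paper: test the Strichartz inequality (\ref{Stri}) with the constant sequence $a_n=1$ to obtain $S(N;b)\le K_{p,N}^{p}(2N)^{b}$, apply Proposition~\ref{prop2}, and convert via $A_{p,N}\sim K_{p,N}^{2}$. The only cosmetic difference is that the paper first deduces $K_{p,N}\ge C(1+N^{1/2-4/p})$ and then squares, whereas you work directly with $A_{p,N}^{p/2}$; the computations are equivalent.
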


\begin{proof}
Let $p=2b$ since $p$ is even. Setting $a_n=1$ for all $n$ in the definition of $K_{p, N}$, we get
\begin{equation}
  S(N; b) \leq  K_{p, N}^p (2N)^{b}\,. 
\end{equation}
By Proposition {\ref{prop2}}, we have 
\begin{equation}
 K_{p, N} \geq C\left (1+ N^{\frac{1}{2}-\frac{4}{p}}\right)\,.
\end{equation}
Consequently, we conclude (\ref{LbofA}) since $A_{p,N}\sim K_{p,N}^2$. 
\end{proof}

\section{An estimate of Hua}
\setcounter{equation}0

The following theorem was proved by Hua in \cite{Hua} by an arithmetic argument. Here we utilize our method 
to provide a different proof.

\begin{theorem}\label{thmHua} 
Let $S(N; b)$ be defined as in (\ref{defJNb}). Then
\begin{equation}\label{Hua-est}
S(N;5)\leq CN^{6+\e}\,. 
\end{equation}
\end{theorem}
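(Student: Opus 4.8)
The plan is to bound $S(N;5)$, the number of solutions of the system (\ref{sys}) with $b=5$ and $|n_j|,|m_j|\le N$, by running the level-set machinery of Section 2 with the exponent $p=10$. Observe that
$$
S(N;5)=\int_{\mathbb T^2}\left|\sum_{n=-N}^N e^{2\pi i tn^3+2\pi i xn}\right|^{10}\,dx\,dt
=\|K_N\|_{10}^{10},
$$
so it suffices to produce the bound $\|K_N\|_{10}\le CN^{3/5+\e}$. The natural route is to estimate the level sets $E_\lambda=\{(x,t)\in\mathbb T^2:|K_N(x,t)|>\lambda\}$ exactly as in Theorem \ref{thm2} and Corollary \ref{cor1}, but now applied to the kernel $K_N$ itself rather than to an $L^2$-normalized exponential sum $F_N$. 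One has trivially $|K_N|\le 2N+1$, so $E_\lambda=\emptyset$ for $\lambda\ge CN$; for $\lambda<CN$ we decompose $K_N=K_{1,Q}+K_{2,Q}$ via Proposition \ref{Prop1} and pair with $f=(\overline{K_N}/|K_N|)\mathbf 1_{E_\lambda}$. Since $\|f\|_1\le|E_\lambda|$ and $\|f\|_2^2\le|E_\lambda|$, the same computation that produced (\ref{estE}) gives, after testing $\langle K_N\ast f,f\rangle=\sum_n|\widehat f(n,n^3)|^2$ against (\ref{K1}) and (\ref{K2}),
$$
\lambda^2|E_\lambda|^2\le C_1N^{\frac14+\e}Q^{\frac14}|E_\lambda|^2+\frac{C_2N^\e}{Q}|E_\lambda|,
\qquad N^2\le Q\le N^3.
$$
Hence for $\lambda\ge 2C_1N^{3/8+\e}$, choosing $Q$ so that $2C_1N^{1/4+\e}Q^{1/4}=\lambda^2$ (note this $Q$ lies in $[N^2,N^3]$ precisely when $2C_1N^{3/8+\e}\le\lambda\le CN^{3/4}$, and $\lambda\le CN^{3/4}$ fails only when $E_\lambda$ is already small by the trivial bound combined with Bourgain's $A_{6,N}\le N^\e$), we get $|E_\lambda|\le CN^{1+\e}\lambda^{-10}$, the analogue of (\ref{Eest2}).

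With this in hand I would split
$$
S(N;5)=10\int_0^{2C_1N^{3/8+\e}}\lambda^9|E_\lambda|\,d\lambda+10\int_{2C_1N^{3/8+\e}}^{CN}\lambda^9|E_\lambda|\,d\lambda.
$$
For the first integral use Bourgain's estimate $A_{6,N}\le N^\e$, which as in Section 2 gives $|E_\lambda|\le N^\e\lambda^{-6}$, so that term is $\le CN^\e\int_0^{2C_1N^{3/8+\e}}\lambda^3\,d\lambda\le CN^{3/2+\e}\le CN^{6+\e}$. For the second integral use the level-set bound $|E_\lambda|\le CN^{1+\e}\lambda^{-10}$, giving $\le CN^{1+\e}\int_{2C_1N^{3/8+\e}}^{CN}\lambda^{-1}\,d\lambda\le CN^{1+\e}\log N\le CN^{1+\e}$. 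Both pieces are $\le CN^{6+\e}$ (indeed far smaller), and summing yields (\ref{Hua-est}).

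The main point to be careful about — and what I expect to be the only real obstacle — is the bookkeeping around the range of $Q$ in Corollary \ref{cor1} when applied to $K_N$ instead of to an $L^2$-normalized function: the choice $2C_1N^{1/4+\e}Q^{1/4}=\lambda^2$ forces $Q\sim\lambda^8 N^{-1-4\e}$, which exceeds $N^3$ once $\lambda\gtrsim N^{1/2}$, so on the sub-range $N^{1/2}\lesssim\lambda\lesssim N$ one cannot choose such a $Q$ and must instead feed $Q=N^3$ into (\ref{estE}) directly. Doing so gives $\lambda^2|E_\lambda|^2\le C_1N^{1+\e}|E_\lambda|^2+C_2N^{-3+\e}|E_\lambda|$, hence $|E_\lambda|\le C N^{-3+\e}\lambda^{-2}$ for $\lambda\ge 2C_1^{1/2}N^{1/2+\e}$, an even stronger bound; plugging this into the corresponding piece of the $\lambda$-integral contributes $\le CN^{-3+\e}\int_{N^{1/2}}^{CN}\lambda^7\,d\lambda\le CN^{5+\e}$, still within budget. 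So one must present the second integral as two sub-integrals, on $[2C_1N^{3/8+\e},\,N^{1/2}]$ and on $[N^{1/2},\,CN]$, but no new idea is needed — everything reduces to the decomposition of Proposition \ref{Prop1}, Cauchy–Schwarz, and the elementary integration already carried out in Section 2.
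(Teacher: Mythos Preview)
Your proposal has a genuine gap: you are applying the inequality (\ref{estE}) of Theorem~\ref{thm2} and the bound $|E_\lambda|\le N^{\e}\lambda^{-6}$ to $K_N$ as if $K_N$ were an $L^2$-normalized exponential sum. It is not. Both of those estimates were derived for $F_N$ under the hypothesis $\sum_n|a_n|^2=1$, and for $K_N$ one has $a_n\equiv1$, hence $\sum_n|a_n|^2=2N+1\sim N$. Concretely, the Cauchy--Schwarz step $\lambda^2|E_\lambda|^2\le\sum_n|\widehat f(n,n^3)|^2$ in the proof of Theorem~\ref{thm2} becomes $\lambda^2|E_\lambda|^2\le (2N+1)\sum_n|\widehat f(n,n^3)|^2$, so your displayed inequality acquires an extra factor of $N$ on the right. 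Likewise Bourgain's $L^6$ estimate gives $\|K_N\|_6\le N^{1/2+\e}$, hence $|E_\lambda|\le N^{3+\e}\lambda^{-6}$, not $N^{\e}\lambda^{-6}$. With the corrected factor of $N$ your choice of $Q$ forces the threshold for the $\lambda^{-10}$ bound up to $\lambda\gtrsim N^{7/8}$ (since $Q\sim\lambda^8 N^{-5}\ge N^2$ requires $\lambda\ge N^{7/8}$), and then the low-$\lambda$ integral becomes
\[
\int_0^{CN^{7/8+\e}}\lambda^{9}\cdot\frac{N^{3+\e}}{\lambda^{6}}\,d\lambda\sim N^{3+\e}\cdot N^{7/2+\e}=N^{13/2+\e},
\]
which is too large. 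Your sub-range patch at $Q=N^3$ suffers from the same missing $N$ and does not rescue the argument either.

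The paper's proof avoids this loss by \emph{not} squaring via Cauchy--Schwarz at the first step. Instead it writes $\lambda|G_\lambda|\le\sum_n\widehat f(n,n^3)=\langle f_N,K_N\rangle$ directly (here $f_N$ is the rectangular Fourier partial sum of $f$ to frequencies $|n_1|\le N$, $|n_2|\le N^3$), decomposes $K_N=K_{1,Q}+K_{2,Q}$, bounds $|\langle f_N,K_{1,Q}\rangle|\le\|K_{1,Q}\|_\infty\|f_N\|_1\le CN^{1/4+\e}Q^{1/4}|G_\lambda|$ (the Dirichlet-kernel $\log$ factors are absorbed into $N^\e$), and bounds $|\langle f_N,K_{2,Q}\rangle|$ by Cauchy--Schwarz over the whole rectangle, giving $CN^{2+\e}Q^{-1}|G_\lambda|^{1/2}$. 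This \emph{linear} inequality in $\lambda$ yields the threshold $\lambda\ge CN^{3/4+\e}$ for $|G_\lambda|\le CN^{6+\e}\lambda^{-10}$, and at that threshold the correct $L^6$ bound $|G_\lambda|\le N^{3+\e}\lambda^{-6}$ is exactly sufficient on the low range. The missing idea in your attempt is precisely this linear (rather than bilinear) use of the kernel decomposition.
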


By Proposition \ref{prop2}, we see that the estimate (\ref{Hua-est}) is (almost) sharp.  
$S(N; 4)\leq N^{4+\e}$ is still open. We now prove Theorem \ref{thmHua}. 

\begin{proof}
Let $G_\lambda$ be the level set given by
\begin{equation}\label{defofG0}
 G_\lambda = \left\{ (x, t)\in \mathbb T^2: |K_N(x,t)|\geq \lambda  \right\}\,.
\end{equation}
Here $K_N$ is the function defined as in (\ref{defofKN}). \\

let $f=\Id_{G_\lambda}K_N/|K_N|$ and we then have
\begin{equation}\label{estGLam}
\lambda|G_\lambda| \leq \sum_{n=-N}^N \widehat f(n, n^3) = \langle f_{N}, K_N \rangle\,,
\end{equation} 
where $ f_N$ is a rectangular Fourier partial sum defined by 
\begin{equation}
f_N(x, t) = \sum_{\substack{|n_1|\leq N \\ |n_2|\leq N^3 } } \wh f(n_1, n_2) e^{2\pi n_1 x } e^{2\pi i n_2 t}\,.
\end{equation}

Employing Proposition \ref{Prop1} for $K_N$, we estimate
the level set $G_\lambda $ by
\begin{equation}
\lambda|G_\lambda|\leq |\langle f_{N}, K_{1, Q}\rangle| + |\langle f_{N}, K_{2, Q}\rangle |  \,
\end{equation}
for any $ Q\geq N^2$.
From (\ref{K1}) and (\ref{K2}),  $\lambda|G_\lambda| $ can be bounded further by
\begin{equation}
C\left( N^{\frac{1}{4}+\e}Q^{1/4}\|f_N\|_1 + \sum_{
\substack{ |n_1|\leq N\\ |n_2|\leq N^3   }  }
   \left| \wh{K_{2, Q}}(n_1, n_2)\wh f(n_1, n_2) \right|  \right)\,.
\end{equation}
Thus from the fact that $L^1$ norm of Dirichlet kernel $D_N$ is comparable to $\log N$, (\ref{K2}),  and Cauchy-Schwarz inequality, we have 
\begin{equation}
\lambda|G_\lambda|\leq C N^{\frac14+\e}Q^{1/4}|G_\lambda| + 
  \frac{C N^{2+\e}  }{Q}|G_\lambda|^{1/2}\,,
\end{equation}
for all $Q\geq N^2$. 
For $\lambda\geq  2C N^{\frac{3}{4} +\e}$, take $Q$ to be a number satisfying  $ 2CN^{\frac14+\e}Q^{1/4} = \lambda $ and then we obtain
\begin{equation}\label{estofG}
 |G_\lambda| \leq \frac{CN^{6+\e} }{\lambda^{10}}\,.
\end{equation}
Notice that 
\begin{equation}\label{L2ofS}
\|K_N\|_6 \leq   N^{\frac{1}{2}} K_{6, p}\leq N^{\frac{1}{2}+\e}\,. 
\end{equation}
Henceforth, by (\ref{estGLam}), we majorize $|G_\lambda|$ by 
\begin{equation}\label{estofG2}
|G_\lambda| \leq \frac{CN^{3+\e}}{\lambda^6} \,.
\end{equation}
We now estimate $S(N; 5)$ by
\begin{equation}\label{JN5est}
S(N;5)\leq C\int_{2CN^{\frac{3}{4}+\e}}^{2 N}
 \lambda^{10 -1 }|G_\lambda| d\lambda 
+ C\int_0^{2CN^{\frac34+\e}}
 \lambda^{10-1 }|G_\lambda| d\lambda  \,. 
\end{equation}
From (\ref{estofG}),  the first term in the right hand side 
of (\ref{JN5est}) can be bounded by $ CN^{6+\e}$. From (\ref{estofG2}), the second term is clearly bounded by
$N^{6+\e}$.  Putting both estimates together,
\begin{equation}\label{estofJnorm}
S(N; 5)\leq CN^{6+\e}\,,
\end{equation}
as desired. Therefore, we complete the proof.
\end{proof}

\section{Estimates for the nonlinear term and Local well-posedness of (\ref{gKdV})}\label{LWP1}
\setcounter{equation}0

For any measurable function $u$ on $\mathbb T\times \mathbb R$, we define the space-time Fourier transform by
\begin{equation}\label{defUhat}
\wh{u}(n, \lambda) = \int_\mathbb R \int_{\mathbb T}u(x, t) e^{- i n x} e^{- i \lambda t} dx \,dt\,
\end{equation} 
and set 
$$\langle x\rangle:= 1+|x|\,.$$

We now introduce the $X_{s, b}$ space, initially used by Bourgain.

\begin{definition}
Let $I$ be an time interval in $\mathbb R$ and $s, b\in\mathbb R$.  Let $X_{s, b}(I)$ be the space of functions $u$ 
on $ \mathbb T\times I $ that may be represented as
\begin{equation}
u(x,t) = \sum_{n\in \mathbb Z}\int_{\mathbb R} \wh u(n, \lambda) e^{ i nx} e^{ i \lambda t} d\lambda\,\,\,
{\rm for}\,\,\,\, (x, t)\in \mathbb T\times I\,
\end{equation}
with the space-time Fourier transform $\wh u$ satisfying
\begin{equation}
\|u\|_{X_{s,b}(I)} =  \left ( \sum_n\int\langle n\rangle^{2s}\langle \lambda-n^3\rangle^{2b}|\widehat{u}(n, \lambda)|^2d\lambda  \right)^{1/2} <\infty\, .
\end{equation}
Here the norm should be understood as a restriction norm. 
\end{definition}

We should take the time interval to be $[0, \delta]$ for a small positive number $\delta$, 
and abbreviate $\|u\|_{X_{s, b}(I)}$ as $\|u\|_{s, b}$ for any function $u$ restricted to 
$ \mathbb T\times [0, \delta]$.   In this section, we always restrict the function $u$ to $\mathbb T\times [0, \delta]$. 
Let $w$ be the nonlinear function defined by
\begin{equation}\label{defofw}
w = \left( u^k - \int u^k dx\right)u_x\,.
\end{equation}
  We also
define
\begin{equation}\label{defofNorm}
\|u\|_{Y_s}:= \|u\|_{s, \frac{1}{2}} +
\left(\sum_n\langle n\rangle^{2s}\left(\int\left|\widehat{u}(n,\lambda)\right|d\lambda\right)^2\right)^\frac{1}{2}\,.
\end{equation}

We need the following estimate 
on the nonlinear function $w$, in order to establish a contraction on the space $\{u: \|u\|_{Y_s}\leq M\}$ for some 
$M>0$.

\begin{proposition}\label{propofw}
For $s>1/2$, there exists $\theta>0$ such that, for the nonlinear function $w$ 
 given by (\ref{defofw}),  
\begin{equation}
\|w\|_{s, -\frac12} + \left(\sum_n\langle n\rangle^{2s}\left(\int\frac{|\widehat{w}(n,\lambda)|}{\langle\lambda-n^3\rangle}d\lambda\right)^2\right)^\frac{1}{2} \leq C\delta^\theta \|u\|_{Y_s}^{k+1}.
\end{equation}
Here $C$ is a constant independent of $\delta$ and $u$. 
\end{proposition}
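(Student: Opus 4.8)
The plan is to estimate the nonlinear term $w = \left(u^k - \int u^k\,dx\right)u_x$ by expanding everything on the Fourier side and reducing matters to a multilinear convolution estimate that is controlled by the Strichartz-type bounds (equivalently, the restriction estimate of Theorem \ref{thm1}) together with the $\delta$-gain coming from working on the short time interval $[0,\delta]$. First I would record the two standard "free gain" facts: (i) for $-\tfrac12 \le b' \le b \le \tfrac12$ one has $\|u\|_{X_{s,b'}([0,\delta])} \lesssim \delta^{b-b'}\|u\|_{X_{s,b}([0,\delta])}$, which is where the factor $\delta^\theta$ will ultimately come from, and (ii) the second summand in the definition of $\|w\|$ (the $\ell^2_n L^1_\lambda$-type norm with weight $\langle\lambda-n^3\rangle^{-1}$) is dominated by $\|w\|_{s,-1/2+}$ via Cauchy--Schwarz in $\lambda$, since $\int \langle\lambda-n^3\rangle^{-1-2\e}\,d\lambda < \infty$; so it suffices to bound $\|w\|_{s,-1/2+\e}$ by $C\delta^\theta\|u\|_{Y_s}^{k+1}$, and then absorb the $\e$ loss by a tiny adjustment of the exponents. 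The point of carrying the $Y_s$ norm on the right (rather than just $X_{s,1/2}$) is that $Y_s$ controls the $\ell^2_n L^1_\lambda$ norm of $u$, which behaves like an $L^\infty_t$-type control and makes the product estimates clean.

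The core reduction: by duality, $\|w\|_{s,-1/2+\e}$ is tested against $g$ with $\|g\|_{-s,1/2-\e}\le 1$, so I must bound
\begin{equation*}
\sum_{n} \langle n\rangle^{s}\int \widehat{w}(n,\lambda)\,\overline{\widehat{g}(n,\lambda)}\,d\lambda
\end{equation*}
and $\widehat{w}$ is, up to the subtraction of the mean (which only kills the $n_1+\cdots+n_k=0$ frequency and is harmless), a $(k+1)$-fold convolution $\widehat{u}(n_1,\lambda_1)\cdots\widehat{u}(n_k,\lambda_k)\cdot (i n_{k+1})\widehat{u}(n_{k+1},\lambda_{k+1})$ summed over $n_1+\cdots+n_{k+1}=n$, $\lambda_1+\cdots+\lambda_{k+1}=\lambda$. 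Distributing the weight $\langle n\rangle^s \approx \langle \max_j n_j\rangle^s$ onto the largest-frequency factor (the usual "the highest frequency dominates" move, using $s>0$ and $n = \sum n_j$), the derivative factor $n_{k+1}$ is the one genuinely dangerous feature. I would split into the two regimes: (a) $|n_{k+1}| \lesssim \max_{1\le j\le k}|n_j|$, where $|n_{k+1}|$ is absorbed by giving that largest factor an extra power of $\langle n\rangle$ — i.e. it costs one derivative which is paid by the $\langle n\rangle^s$ weight sitting on a different (larger) factor, combined with $s>1/2$; (b) $|n_{k+1}| \gtrsim \max_j |n_j|$, so $n_{k+1}$ carries the top frequency, and then one exploits the dispersion/resonance identity $\lambda - n^3 = \sum_j (\lambda_j - n_j^3) + \big(\sum_j n_j^3 - (\sum_j n_j)^3\big)$ — the algebraic factor $\sum n_j^3 - (\sum n_j)^3$ is large (it contains the product of differences of the frequencies), so either some $\langle \lambda_j - n_j^3\rangle$ or $\langle \lambda - n^3\rangle$ is comparably large, and that large weight recovers the lost derivative. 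This is exactly the mechanism behind the $X_{s,b}$ bilinear estimates for KdV; here it is packaged so that the "smoothing" is delivered by the $L^p$ Strichartz estimate rather than an explicit bilinear computation.

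Once the frequencies are reorganized, I would pass from the frequency-side convolution bound back to a physical-side product estimate: the sum/integral above is $\int_{\mathbb T\times[0,\delta]} \big(\text{product of } k+1 \text{ functions}\big)\cdot \bar G$, and by Hölder and the Strichartz inequality (\ref{Stri}) — more precisely its space-time $X_{s,b}$ corollary, valid for all the exponents $p$ covered, including large $p$ from Theorem \ref{thm1} — each factor $\|v_j\|_{L^{p_j}_{x,t}} \lesssim \|v_j\|_{X_{s_j, 1/2}}$ with $s_j > 1/2 - 4/p_j$, and the numerology closes because $s > 1/2$ leaves room to distribute derivatives and still have each $s_j$ above the Strichartz threshold while keeping $\sum 1/p_j + 1/p' = 1$. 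The factor $\delta^\theta$ is inserted at the very end: replace one of the $X_{s,1/2}$ norms (or the output norm $X_{s,-1/2+\e}$) using fact (i) to trade a power of $\delta$ for a slight worsening of the $b$-index, which is permissible since we have the $\e$-room. \textbf{The main obstacle} I anticipate is item (b) above — bookkeeping the loss of the full derivative $u_x$ when the differentiated factor is simultaneously the highest frequency: one must verify that the resonance gain from $\sum n_j^3 - (\sum n_j)^3$ (which for $k+1 \ge 4$ factors is more delicate than the classical $b=3$ case since three or more frequencies can partially cancel) is always at least $\gtrsim \langle n_{k+1}\rangle$ after accounting for the worst configurations, and that this gain can be split among the available weights $\langle\lambda_j - n_j^3\rangle$ without over-drawing on any single factor. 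Handling the sub-case where many frequencies are comparable and nearly cancel — so the naive "product of differences" lower bound degenerates — will require a further dyadic decomposition and is where the argument is genuinely technical; this is also where the hypothesis $k\ge 3$ (and the $C^5$ regularity in the general $F(u)$ version) enters, giving enough factors to absorb the derivative with room to spare.
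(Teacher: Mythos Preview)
Your overall architecture---duality, expand the convolution, split by frequency configuration, invoke the cubic resonance identity, then close with H\"older plus Strichartz embeddings---matches the paper's. Your reduction (ii), controlling the $\ell^2_nL^1_\lambda$ piece by $\|w\|_{s,-\frac12+\e}$ via Cauchy--Schwarz in $\lambda$, is legitimate and in fact streamlines the paper, which instead treats that piece by a parallel duality argument with extra exceptional cases. You also do not need Theorem~\ref{thm1}: the paper closes everything with the $L^4$ and $L^6$ embeddings $X_{0,\frac13}\subset L^4$, $X_{0+,\frac12+}\subset L^6$ and their interpolants.

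The genuine gap is in your case (a). Writing $m=n_{k+1}$ for the differentiated frequency and ordering $|n_1|\ge\cdots\ge|n_k|$, your dichotomy (a)~$|m|\lesssim|n_1|$ versus (b)~$|m|\gtrsim|n_1|$ is too coarse. In the regime $|n_2|\ll|m|\lesssim|n_1|$ you are in case (a), but your stated mechanism---``absorb $|m|$ by giving the largest factor an extra power of $\langle n\rangle$''---does not work: $\langle n\rangle^s|m|$ is of order $\langle n_1\rangle^{s+1}$ and there is no second large frequency to share the burden (each factor can carry at most $s$ derivatives, and only $n_1$ and $m$ are large). The paper splits into \emph{three} cases according to whether $|m|$ is below $|n_2|$, between $|n_2|$ and $|n_1|$, or above $|n_1|$; the middle case already requires the resonance identity, not just the top case. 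Moreover, the identity you need is not the vague ``$\sum n_j^3-(\sum n_j)^3$ contains a product of differences'' but the specific two-term form
\[
n^3-\bigl(m^3+n_1^3+\cdots+n_k^3\bigr)=3(m+n_1)(m+a)(n_1+a)+\bigl[a^3-(n_2^3+\cdots+n_k^3)\bigr],\qquad a=n_2+\cdots+n_k,
\]
which isolates the two dominant frequencies $m,n_1$ in a trilinear factor of size $\gtrsim |m|^2$ while the bracketed remainder is $O(k|n_2|^2|n_3|)$ and hence negligible in the relevant regimes. Without this refinement your argument will not close in the middle range, which is precisely the ``partial cancellation'' obstacle you flag but misplace in case (b).
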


The proof of Proposition \ref{propofw} will appear in Section \ref{proofP1}. 
We now start to derive the local well-posedness of (\ref{gKdV}).  For this purpose, we only need to 
consider the well-posedness of the Cauchy problem:
\begin{equation}\label{KdV2}
\begin{cases}
u_t+u_{xxx}+\left(u^k-\int_\mathbb{T}u^kdx\right)u_x=0\\
u(x,0)=\phi(x),\qquad x\in\mathbb{T},\ t\in\mathbb{R}
\end{cases}.
\end{equation}

This is because if $v$ is a solution of (\ref{KdV2}), then the gauge transform 
\begin{equation}\label{Gauge1}
u(x,t):= v\left(x-\int_0^t\int_\mathbb{T}v^k(y,\tau)dyd\tau,  t  \right)\,.
\end{equation}
is a solution of (\ref{gKdV}) with the same initial value $\phi$. Notice that this 
transform is invertible and preserves the initial data $\phi$. The inverse transform is
\begin{equation}
v(x,t):= u\left(x+\int_0^t\int_\mathbb{T}u^k(y,\tau)dyd\tau,  t  \right)\,.
\end{equation} 
It is easy to see that for any solution $u$ of (\ref{gKdV}), this inverse transform 
of $u$ defines a solution of (\ref{KdV2}).   
Hence to establish well-posedness of (\ref{gKdV}), it suffices to obtain the well-posedness of 
(\ref{KdV2}). This gauge transform was used in \cite{Tao}. \\

By Duhamel principle, the corresponding integral equation associated to (\ref{KdV2}) is
\begin{equation}
u(x,t) = e^{-t\partial_x^3}\phi(x)-\int_0^te^{-(t-\tau)\partial_x^3}w(x,\tau)d\tau,
\end{equation}
where $w$ is defined as in (\ref{defofw}).\\

Since we are only seeking for the local well-posedness, we may use a bump function to truncate time variable. 
Let $\psi$ be a bump function supported in $[-2, 2]$ with $\psi(t)=1, |t|\leq 1$, and let $\psi_\delta$ be 
$$
\psi_\delta(t) =\psi(t/\delta)\,.
$$
Then it suffices to find a local solution of
$$u(x,t) = \psi_\delta(t)e^{-t\partial_x^3}\phi(x)-\psi_\delta(t)\int_0^te^{-(t-\tau)\partial_x^3}w(x,\tau)d\tau.$$
Let $T$ be an operator given by
\begin{equation}\label{defofT}
Tu(x,t):= \psi_\delta(t)e^{-t\partial_x^3}\phi(x)-\psi_\delta(t)\int_0^te^{-(t-\tau)\partial_x^3}w(x,\tau)d\tau.
\end{equation}
We denote the first term (the linear term) in (\ref{defofT}) by ${\mathcal L}u$ and the second term
(the nonlinear term) by ${\mathcal N}u$. Henceforth we represent $Tu$ as $ {\mathcal L}u +{\mathcal N}u $.    

\begin{lemma}\label{estLu}
The linear term $ \mathcal L$ satisfies 
\begin{equation}\label{estLu1}
\|\mathcal L u\|_{Y_s}\leq C\|\phi\|_{H^s}\,. 
\end{equation}
Here $C$ is a constant independent of $\delta$. 
\end{lemma}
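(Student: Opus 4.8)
The plan is to estimate the two pieces of the $Y_s$-norm of $\mathcal L u$ separately, namely the $X_{s,1/2}$ part and the $\ell^2_n L^1_\lambda$ part appearing in the definition (\ref{defofNorm}). Both reduce to estimating the time-truncated free evolution $\psi_\delta(t)e^{-t\p_x^3}\phi$. First I would compute the space-time Fourier transform: since $e^{-t\p_x^3}\phi(x)=\sum_n\wh\phi(n)e^{inx}e^{itn^3}$, one gets
\[
\wh{\mathcal L u}(n,\lambda)=\wh\phi(n)\,\wh{\psi_\delta}(\lambda-n^3),
\]
so that the $n$-dependence factors out cleanly and the whole problem becomes a one-variable question about the multiplier $\wh{\psi_\delta}$ against the weights $\langle\lambda-n^3\rangle^{1/2}$ and $\langle\lambda-n^3\rangle^0$. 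After the change of variables $\mu=\lambda-n^3$ this is the statement that $\|\langle\mu\rangle^{1/2}\wh{\psi_\delta}(\mu)\|_{L^2_\mu}$ and $\|\wh{\psi_\delta}(\mu)\|_{L^1_\mu}$ are both $\le C$ uniformly in $\delta\in(0,1]$.

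Next I would verify those two scalar bounds. Since $\psi\in\mathcal S$ (indeed compactly supported and smooth), $\wh\psi$ is Schwartz, and $\wh{\psi_\delta}(\mu)=\delta\,\wh\psi(\delta\mu)$. For the $L^1$ bound, $\int|\delta\wh\psi(\delta\mu)|d\mu=\|\wh\psi\|_{L^1}$, a constant independent of $\delta$. For the $H^{1/2}_t$-type bound, $\int\langle\mu\rangle\,\delta^2|\wh\psi(\delta\mu)|^2 d\mu=\delta\int\langle\nu/\delta\rangle|\wh\psi(\nu)|^2 d\nu\le\int(1+|\nu|/\delta)|\wh\psi(\nu)|^2 d\nu$; for $\delta\le 1$ this is $\le\int\langle\nu\rangle|\wh\psi(\nu)|^2 d\nu<\infty$ because $\wh\psi$ decays rapidly. (One could also note that multiplication by the time cutoff $\psi_\delta$ is bounded on $H^b_t$ uniformly for $\delta\le 1$ when $0\le b\le 1$, which is standard; here we only need $b=1/2$.) Combining, $\|\mathcal L u\|_{s,1/2}^2=\sum_n\langle n\rangle^{2s}|\wh\phi(n)|^2\|\langle\mu\rangle^{1/2}\wh{\psi_\delta}(\mu)\|_{L^2_\mu}^2\le C\|\phi\|_{H^s}^2$, and similarly the second term of $\|\mathcal L u\|_{Y_s}$ is $\le C\big(\sum_n\langle n\rangle^{2s}|\wh\phi(n)|^2\big)^{1/2}\|\wh{\psi_\delta}\|_{L^1}=C\|\phi\|_{H^s}$. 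Adding the two gives (\ref{estLu1}).

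I do not expect any real obstacle here; this is the easy half of the contraction setup and the only thing to be careful about is keeping the $\delta$-dependence harmless (we want a bound independent of $\delta$, not a gain in $\delta$ — the smallness in $\delta$ comes later, from Proposition \ref{propofw} applied to the nonlinear term $\mathcal N u$). The one mild subtlety worth a sentence in the write-up is that $\|u\|_{X_{s,b}(I)}$ is a restriction norm, so strictly speaking one bounds the $X_{s,b}(\mathbb R)$-norm of the explicit extension $\psi_\delta(t)e^{-t\p_x^3}\phi$ and then uses that the restriction norm is no larger; since our chosen representative is already globally defined on $\mathbb T\times\mathbb R$, this is immediate. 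A symmetric remark applies to the $\ell^2_n L^1_\lambda$ term. Writing this out: by definition of the restriction norm and the computation of $\wh{\mathcal L u}$ above, everything collapses to the two uniform scalar estimates for $\wh{\psi_\delta}$, and the lemma follows.
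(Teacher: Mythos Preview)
Your proposal is correct and follows essentially the same route as the paper: compute $\wh{\mathcal L u}(n,\lambda)=\wh\phi(n)\,\delta\,\wh\psi(\delta(\lambda-n^3))$, then use the rapid decay of $\wh\psi$ to bound both pieces of the $Y_s$-norm by $C\|\phi\|_{H^s}$. One small arithmetic slip to fix in your write-up: the chain $\delta\int\langle\nu/\delta\rangle|\wh\psi(\nu)|^2\,d\nu\le\int(1+|\nu|/\delta)|\wh\psi(\nu)|^2\,d\nu\le\int\langle\nu\rangle|\wh\psi(\nu)|^2\,d\nu$ has a wrong middle step (the last inequality goes the wrong way for $\delta\le 1$); the clean line is simply $\delta\langle\nu/\delta\rangle=\delta+|\nu|\le\langle\nu\rangle$ for $\delta\le 1$, which gives the desired bound directly.
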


\begin{proof}
Notice that
$$\widehat{\mathcal L u}(n,\lambda) = \widehat{\phi}(n){\mathcal F}_{\mathbb R}{\psi_\delta}(\lambda-n^3) =
 \widehat{\phi}(n) \delta \mathcal F_{\mathbb R}{\psi}\left( \delta( \lambda-n^3)\right)  ,$$
Thus from the definition of $Y_s$ norm, 
\begin{align*}
\|\mathcal Lu\|_{Y_s} = &\left(\sum_n \int \langle n\rangle^{2s}\langle \lambda-n^3\rangle \left|\widehat{\phi}(n)
  \delta \mathcal F_{\mathbb R}{\psi}\left(\delta(\lambda-n^3)\right)\right|^2 d\lambda\right)^\frac{1}{2}\\
&+ \left(\sum_n \langle n\rangle^{2s}\left(\int \left|\widehat{\phi}(n) \delta \mathcal F_{\mathbb R}\psi\left(\delta(\lambda-n^3)\right) \right| d\lambda \right)^2\right)^\frac{1}{2}\,.
\end{align*}
Since $\psi$ is a Schwartz function, its Fourier transform is also a Schwartz function.  Using the fast decay property 
for the Schwartz function, we have 
$$
\|\mathcal L u\|_{Y_s}\leq C \left(\sum_n \langle n \rangle^{2s}\left|\widehat{\phi}(n)\right|^2\right)^\frac{1}{2} =C \|\phi\|_{H^s}.
$$

\end{proof}

\begin{lemma}\label{estNu}
The nonlinear term $ \mathcal N$ satisfies 
\begin{equation}\label{estNu1}
\|\mathcal N u\|_{Y_s}\leq C \left( \|w\|_{s, -\frac12} + \left(\sum_n\langle n\rangle^{2s}\left(\int\frac{|\widehat{w}(n,\lambda)|}{\langle\lambda-n^3\rangle}d\lambda\right)^2\right)^\frac{1}{2}  \right)\,,
\end{equation}
where $C$ is a constant independent of $\delta$. 
\end{lemma}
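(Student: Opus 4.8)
The plan is to reduce the estimate (\ref{estNu1}) to a one-dimensional inequality in the time variable, treated frequency by frequency in $n$, and then to analyze the Duhamel integral on the Fourier side. First I would use that $\mathcal N$ acts diagonally on the spatial frequency $n$: the propagator $e^{-(t-\tau)\partial_x^3}$ and the time integration do not mix the $n$'s, while both sides of (\ref{estNu1}) are weighted $\ell^2_n$ sums, with weight $\langle n\rangle^{2s}$, of quantities that for each $n$ depend only on the $n$-th spatial Fourier coefficient. Hence it suffices to prove, for each fixed $n$ and with $\omega:=n^3$ a free real parameter, the scalar estimate
\begin{equation}
\int\langle\lambda-\omega\rangle\,|\widehat{\mathcal N u}(n,\lambda)|^2\,d\lambda+\Bigl(\int|\widehat{\mathcal N u}(n,\lambda)|\,d\lambda\Bigr)^2\leq C\int\frac{|\widehat w(n,\lambda)|^2}{\langle\lambda-\omega\rangle}\,d\lambda+C\Bigl(\int\frac{|\widehat w(n,\lambda)|}{\langle\lambda-\omega\rangle}\,d\lambda\Bigr)^2,
\end{equation}
with $C$ independent of $\omega$ and of $\delta\in(0,1]$; summing against $\langle n\rangle^{2s}$ then yields (\ref{estNu1}). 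Conjugating by $e^{-i\omega t}$ (equivalently, substituting $\lambda\mapsto\lambda+\omega$) sends $\omega$ to $0$: writing $g$ for the $n$-th spatial Fourier coefficient of $w$ after this conjugation, so that $\widehat g(\lambda)=\widehat w(n,\lambda+\omega)$ up to a harmless constant, and $G(t):=\int_0^t g(\tau)\,d\tau$, the task becomes
\begin{equation}
\|\psi_\delta G\|_{H^{1/2}_t}+\|\widehat{\psi_\delta G}\|_{L^1_\lambda}\ \lesssim\ \|g\|_{H^{-1/2}_t}+\|\langle\lambda\rangle^{-1}\widehat g\|_{L^1_\lambda},
\end{equation}
uniformly in $\delta\in(0,1]$; only boundedness is needed here, the $\delta^\theta$ gain residing in Proposition \ref{propofw}.

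Next I would pass to the Fourier side in $t$ and split $g=g_{\mathrm{lo}}+g_{\mathrm{hi}}$ by a smooth cutoff according to $|\lambda|\le 1$ or $|\lambda|>1$, with $G=G_{\mathrm{lo}}+G_{\mathrm{hi}}$ the corresponding primitives. On the low piece I would use the identity $(e^{i\lambda t}-1)/(i\lambda)=t\int_0^1 e^{i\lambda t s}\,ds$, so that for $|\lambda|\le 1$ the function $\psi_\delta(t)(e^{i\lambda t}-1)/(i\lambda)$ is a modulation by a frequency of size $\le 1$ of the fixed profile $t\,\psi_\delta(t)$; its $H^{1/2}_t$ norm and the $L^1$ norm of its $t$-Fourier transform are then uniformly bounded in $|\lambda|\le 1$ and $\delta\le 1$, and Minkowski's inequality in $\lambda$ reduces this piece to $\int_{|\lambda|\le 1}|\widehat g(\lambda)|\,d\lambda$, which is dominated both by $\|g\|_{H^{-1/2}}$ (Cauchy-Schwarz on a set of finite measure) and by $\|\langle\lambda\rangle^{-1}\widehat g\|_{L^1}$. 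On the high piece, write $G_{\mathrm{hi}}=v-c$ with $\widehat v(\lambda)=\widehat g(\lambda)\,\mathbf 1_{|\lambda|>1}/(i\lambda)$ and $c=\tfrac{1}{2\pi}\int_{|\lambda|>1}\widehat g(\lambda)/(i\lambda)\,d\lambda$; since $1/|\lambda|\sim\langle\lambda\rangle^{-1}$ there, one has at once $\|v\|_{H^{1/2}}\lesssim\|g\|_{H^{-1/2}}$, $\|\widehat v\|_{L^1}\lesssim\|\langle\lambda\rangle^{-1}\widehat g\|_{L^1}$ and $|c|\lesssim\|\langle\lambda\rangle^{-1}\widehat g\|_{L^1}$. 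The constant term then costs only $|c|\,(\|\psi_\delta\|_{H^{1/2}_t}+\|\widehat{\psi_\delta}\|_{L^1})=O(|c|)$, while for $\psi_\delta v$ one combines $\|\widehat{\psi_\delta v}\|_{L^1}\le\|\widehat\psi\|_{L^1}\|\widehat v\|_{L^1}$ with the multiplication bound $\|\psi_\delta v\|_{H^{1/2}}\lesssim\|v\|_{H^{1/2}}+\|\widehat v\|_{L^1}$; this last bound follows from the Gagliardo characterization of $H^{1/2}(\mathbb R)$, the scale-$\delta$ Lipschitz bound for $\psi_\delta$, and $\|v\|_{L^\infty}\le\|\widehat v\|_{L^1}$, and is uniform in $\delta\le 1$. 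Collecting the pieces finishes the reduced estimate, hence Lemma \ref{estNu}.

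The one genuine obstacle is the near-resonant region $\lambda\approx n^3$, where the Duhamel multiplier $1/(\lambda-n^3)$ is singular: this is exactly where the plain endpoint space $X_{s,1/2}$ would fail, and the remedy is the extra $\ell^1_\lambda$ component built into $Y_s$ and, correspondingly, into the right-hand side of (\ref{estNu1}). Concretely, that $\ell^1_\lambda$ term is what absorbs the time-independent piece $c\,\psi_\delta$ above and, less obviously, what makes multiplication by the sharply localized cutoff $\psi_\delta$ bounded into $H^{1/2}$ with a constant independent of $\delta$ (on $H^{1/2}$ alone that multiplication loses a logarithm in $1/\delta$). The remaining points are bookkeeping: one checks $\|\psi_\delta\|_{H^{1/2}_t}\le\|\psi\|_{H^{1/2}_t}$ and $\|\widehat{\psi_\delta}\|_{L^1}=\|\widehat\psi\|_{L^1}$ for $\delta\le 1$, so all cutoff constants are uniform, and that the weighted $\ell^2_n$ summation in the first step loses nothing.
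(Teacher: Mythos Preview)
Your argument is correct and follows the same overall architecture as the paper's proof: both work frequency-by-frequency in $n$, split the Duhamel multiplier $(e^{i(\lambda-n^3)t}-1)/(i(\lambda-n^3))$ according to the size of $\lambda-n^3$, and on the high-frequency side separate the oscillatory piece $e^{i\lambda t}/(\lambda-n^3)$ from the time-independent constant (the paper's $\mathcal N_2$ and $\mathcal N_3$, your $v$ and $c$). The tactical choices differ, however. The paper splits at the $\delta$-dependent scale $|\lambda-n^3|\sim 1/\delta$, Taylor-expands $e^{i(\lambda-n^3)t}-1$ as an infinite series for the near-resonant piece $\mathcal N_1$, and handles multiplication by $\psi_\delta$ on the Fourier side via the Fourier series $\psi_\delta(t)=\sum_m C_m e^{imt/\delta}$ with rapidly decaying $C_m$. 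You instead split at the fixed scale $|\lambda-n^3|\sim 1$, treat the low piece by a single Minkowski step against the uniformly bounded profiles $\psi_\delta(t)(e^{i\lambda t}-1)/(i\lambda)$, and bound $\|\psi_\delta v\|_{H^{1/2}}$ by the Gagliardo seminorm plus $\|v\|_{L^\infty}\le\|\widehat v\|_{L^1}$. Your route is a bit more streamlined (no infinite series, no $\psi_\delta$-Fourier expansion) and makes transparent exactly where the $\ell^1_\lambda$ component of $Y_s$ is needed to avoid the logarithmic loss that multiplication by $\psi_\delta$ would otherwise incur on $H^{1/2}$; the paper's route stays entirely on the Fourier side and is perhaps easier to adapt when one wants to track the precise $\delta$-dependence.
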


\begin{proof}
Represent $w$ as its space-time inverse Fourier transform so that 
we write 
\begin{equation}
\mathcal Nu(x,t) = -\psi_\delta(t)\int_0^t e^{-(t-\tau)\partial_x^3}\left(\sum_n\int\widehat{w}(n,\lambda)e^{inx}e^{i\lambda\tau}d\lambda\right)d\tau\,,
\end{equation} 
which is equal to
\begin{align*}
 & -\psi_\delta(t)\sum_n\int \widehat{w}(n,\lambda)\int_0^t e^{-(t-\tau)( in)^3}e^{inx}e^{i\lambda\tau}d\tau d\lambda\\
=&-\psi_\delta(t) \sum_n\int \widehat{w}(n,\lambda) e^{ inx}e^{in^3t}\ \frac{e^{i(\lambda-n^3)t}-1}{i(\lambda-n^3)}\ d\lambda\,.
\end{align*}
We decompose the nonlinear term $\mathcal Nu$ into three parts, denoted by $\mathcal N_1, \mathcal N_2, 
\mathcal N_3$ respectively. 

\begin{align*}
\mathcal Nu(x,t)
=& -\psi_\delta (t)\sum_n\int_{|\lambda-n^3|\leq\frac{1}{100\delta}}\widehat{w}(n,\lambda)e^{inx}e^{in^3t}\sum_{k\geq1}\frac{(it)^k}{k!}(\lambda-n^3)^{k-1}d\lambda\\
& +i \psi_\delta(t)\sum_n\int_{|\lambda-n^3| > \frac{1}{100\delta}}\frac{\widehat{w}(n,\lambda)}{\lambda-n^3}e^{inx}e^{i\lambda t} d\lambda\\
&- i\psi_\delta(t)\sum_n\left(\int_{|\lambda-n^3| >\frac{1}{100\delta}}\frac{\widehat{w}(n,\lambda)}{\lambda-n^3}d\lambda\right)e^{inx}e^{in^3t}\\
:=& \mathcal N_1u + \mathcal N_2u + \mathcal N_3u.
\end{align*}

First we estimate $\mathcal N_2$.  Using Fourier series expansion for $\psi$, we get
$$
\psi_\delta(t) =\sum_{m\in \mathbb Z} C_m e^{im t/\delta}\,.
$$
Here the coefficients $C_m$'s satisfy 
$$
 C_m \leq C (1+|m|)^{-100}\,. 
$$
Hence $\mathcal N_2u$ can be represent as
\begin{equation}
\mathcal N_2 u =i\sum_{m}C_m \sum_{n} e^{inx}\int_{|\lambda-n^3| > \frac{1}{100\delta}}\frac{\widehat{w}(n,\lambda)}{\lambda-n^3} e^{i(\lambda + m/\delta ) t} d\lambda
\end{equation}
By a change of variables $(\lambda+m/\delta)\mapsto \lambda $, 
\begin{equation}
\mathcal N_2 u =i\sum_{m}C_m \sum_{n} e^{inx}\int_{|\lambda-\frac{m}{\delta}-n^3| > \frac{1}{100\delta}}\frac{\widehat{w}(n,\lambda-m/\delta)}{\lambda-\frac{m}{\delta}-n^3} e^{i\lambda t} d\lambda
\end{equation}

Thus we estimate
\begin{equation}
\|\mathcal N_2 u\|_{s, \frac12}^2\leq  C\sum_m(1+|m|)^{-50}\sum_n \langle n\rangle^{2s}\int_{
 |\lambda-\frac{m}{\delta}-n^3|>\frac{1}{100\delta}} \frac{\langle \lambda-n^3\rangle \left| \wh w(n, \lambda-m/\delta) \right|^2}{|\lambda-\frac{m}{\delta}-n^3|^2} d\lambda\,.
\end{equation}
Changing variables again, we obtain
\begin{equation}
\|\mathcal N_2 u\|_{s, \frac12}^2\leq  C\sum_m(1+|m|)^{-50}\sum_n \langle n\rangle^{2s}\int_{
 |\lambda-n^3|>\frac{1}{100\delta}} \frac{\langle \lambda+\frac{m}{\delta}-n^3\rangle \left| \wh w(n, \lambda) \right|^2}{ \langle 
 \lambda-n^3\rangle^2} d\lambda\,.
\end{equation}
Notice that $ |\lambda-n^3|> \frac{1}{100\delta} $ implies 
\begin{equation}
 \langle \lambda +\frac{m}{\delta} -n^3\rangle \leq  200m\langle \lambda -n^3\rangle\,.
\end{equation}
We obtain immediately 
\begin{equation}\label{estN21}
\|\mathcal N_2 u\|_{s, \frac12}\leq C\| w\|_{s, -\frac{1}{2}}\,.
\end{equation}
On the other hand, 
$$
\sum_n \langle n\rangle^{2s}\!\left( \!\int  {| \wh{\mathcal N_2u}(n, \lambda)|}  d\lambda\right)^2\!\!
\!\!\leq C\sum_{m}\langle m\rangle^{-5}\!\sum_n\langle n \rangle^{2s}\!\!\left(\! \int_{|\lambda-\frac{m}{\delta}-n^3|>\frac{1}{100\delta}} \!\!  \frac{|\wh w(n, \lambda-m/\delta) |d\lambda}{  |\lambda-\frac{m}{\delta}-n^3|}  \right)^2,
$$
which is clearly bounded by
\begin{equation}\label{estN22}
\sum_n\langle n \rangle^{2s}\left(\int \frac{|\wh w(n, \lambda) |d\lambda}{  \langle\lambda-n^3\rangle}  \right)^2.
\end{equation}
Putting (\ref{estN21}) and (\ref{estN22}) together, we have 
\begin{equation}\label{estN2}
\| \mathcal N_2 u \|_{Y_s}\leq C\left ( \|w\|_{s,-\frac{1}{2}}+\left(\sum_n\langle n\rangle^{2s}\left(\int\frac{|\widehat{w}(n,\lambda)|}{\langle\lambda-n^3\rangle}d\lambda\right)^2\right)^\frac{1}{2} \right)\,.
\end{equation}
Let $A_n$ be defined by
\begin{equation}
A_n = \int_{|\lambda-n^3|\leq \frac{1}{100\delta}}\wh w(n, \lambda)(\lambda-n^3)^{k-1}d\lambda\,.
\end{equation}
Then $\mathcal N_1u$ can be written as
\begin{equation}
\mathcal N_1u(x, t) =-\sum_{k\geq 1}\frac{i^k}{k!}t^k\psi_\delta(t)\sum_n  A_n e^{inx}e^{in^3t}\,.
\end{equation}
Hence the space-time Fourier transform of $\mathcal N_1u$ satisfies 
\begin{equation}\label{N1Fest}
\left|\wh {\mathcal N_1u}(n, \lambda)\right|\leq \sum_{k\geq 1}\frac{1}{k!} |A_n|\left| \mathcal F_{\mathbb R}(\tilde{\psi_\delta})(\lambda-n^3)\right|\,,
\end{equation}
where $\tilde{\psi_\delta}(t)=t^k\psi_\delta(t)$.  Using the definition of Fourier transform, we have
$$
\left| \mathcal F_{\mathbb R}(\tilde{\psi_\delta})(\lambda-n^3)\right|\leq C\delta^{k+1}k^{3}\langle  \delta (\lambda-n^3)\rangle^{-3}\,.
$$
Thus 
\begin{eqnarray*}
\|\mathcal N_1u\|_{Y_s}^2 & \leq & 
 \sum_{k\geq 1} \frac{C}{k^5}\sum_n \langle n\rangle^{2s}|A_n|^2\delta^{2k}\int  \delta^2\langle \lambda-n^3\rangle
 \langle  \delta (\lambda-n^3)\rangle^{-6}
 d\lambda  \\
 &   &+  \sum_{k\geq 1}\frac{C}{k^5}\sum_n \langle n\rangle^{2s}|A_n|^2\delta^{2k}\left(\int  \delta \langle  \delta (\lambda-n^3)\rangle^{-3}  d\lambda \right)^2\\
&\leq &\sum_{k\geq 1} \frac{C}{k^5}\sum_n \langle n\rangle^{2s}|A_n|^2\delta^{2k}  \,.
\end{eqnarray*}
Clearly $A_n$ is bounded by
\begin{equation}
|A_n|\leq C\delta^{-k}\int \frac{|\wh w(n, \lambda)|}{\langle\lambda-n^3\rangle}d\lambda\,.
\end{equation}
Henceforth, we obtain
\begin{equation}\label{N1est}
\|\mathcal N_1u\|_{Y_s}\leq C\left(\sum_n\langle n\rangle^{2s}\left(\int\frac{|\widehat{w}(n,\lambda)|}{\langle\lambda-n^3\rangle}d\lambda\right)^2\right)^\frac{1}{2}.
\end{equation}

Similarly,  we may obtain
\begin{equation}\label{N3est}
\|\mathcal N_3u\|_{Y_s}\leq C\left(\sum_n\langle n\rangle^{2s}\left(\int\frac{|\widehat{w}(n,\lambda)|}{\langle\lambda-n^3\rangle}d\lambda\right)^2\right)^\frac{1}{2}.
\end{equation}

Therefore we complete the proof.

\end{proof}

\begin{proposition}\label{propTu}
Let $s>1/2$ and $T $ be the operator defined as in (\ref{defofT}).  Then there exits a
positive number $\theta$ such that
\begin{equation}\label{estofTu1}
\|Tu\|_{Y_s}\leq C\left(\|\phi\|_{H^s} + \delta^\theta \|u\|_{Y_s}^{k+1}\right)\,. 
\end{equation}
Here $C$ is a constant independent of $\delta$. 
\end{proposition}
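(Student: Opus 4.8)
The plan is to read off the estimate directly from the decomposition $Tu = \mathcal{L}u + \mathcal{N}u$ introduced just before the statement, together with the three ingredients already assembled: Lemma \ref{estLu} for the linear piece, Lemma \ref{estNu} for the nonlinear piece, and Proposition \ref{propofw} for the nonlinear function $w$. Since $\|\cdot\|_{Y_s}$ is a genuine norm (it is built from a weighted $\ell^2$ norm of $\wh u(n,\cdot)$ in $L^2_\lambda$ and $L^1_\lambda$), the triangle inequality gives
\[
\|Tu\|_{Y_s}\leq \|\mathcal{L}u\|_{Y_s}+\|\mathcal{N}u\|_{Y_s},
\]
and Lemma \ref{estLu} bounds the first term by $C\|\phi\|_{H^s}$ with $C$ independent of $\delta$.

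For the nonlinear term, I would invoke Lemma \ref{estNu}, which yields
\[
\|\mathcal{N}u\|_{Y_s}\leq C\left(\|w\|_{s,-\frac12}+\left(\sum_n\langle n\rangle^{2s}\left(\int\frac{|\wh w(n,\lambda)|}{\langle\lambda-n^3\rangle}d\lambda\right)^2\right)^{1/2}\right),
\]
again with $C$ independent of $\delta$. The expression on the right is exactly the quantity controlled by Proposition \ref{propofw}, which for $s>1/2$ supplies a $\theta>0$ with this quantity bounded by $C\delta^\theta\|u\|_{Y_s}^{k+1}$. Combining the three displays and carrying the same $\theta$ through produces (\ref{estofTu1}), with a constant $C$ independent of $\delta$ since each cited bound has this property.

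Strictly speaking there is no real obstacle at this stage: the proposition is a bookkeeping assembly of results already in place, the one point requiring care being that the constants in Lemmas \ref{estLu} and \ref{estNu} are genuinely $\delta$-independent (so that all $\delta$-dependence is concentrated in the factor $\delta^\theta$ coming from Proposition \ref{propofw}), and that the exponent $\theta$ appearing in the conclusion is precisely the one produced by Proposition \ref{propofw}. The substantive work — extracting the smoothing factor $\delta^\theta$ from the nonlinear interaction, which is where the Strichartz-type input of Section 2 enters — is deferred to Proposition \ref{propofw} and proved in Section \ref{proofP1}; here one simply chains the estimates together.
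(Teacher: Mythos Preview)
Your proposal is correct and mirrors the paper's own proof exactly: the paper simply notes that since $Tu=\mathcal{L}u+\mathcal{N}u$, the estimate follows by combining Lemma~\ref{estLu}, Lemma~\ref{estNu}, and Proposition~\ref{propofw}. Your additional remarks about the triangle inequality for $\|\cdot\|_{Y_s}$ and the $\delta$-independence of the constants are accurate and make the assembly explicit.
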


\begin{proof}
Since $Tu=\mathcal L u +\mathcal N u$, Proposition \ref{propTu} follows from
Lemma {\ref{estLu}}, Lemma {\ref{estNu}} and Proposition {\ref{propofw}}.   
\end{proof}

Proposition \ref{propTu} yields that for $\delta$ sufficiently small, $T$ maps a ball in $Y_s$ 
into itself. Moreover, we write 
\begin{eqnarray*}
& & \left(u^k-\int_\mathbb{T}u^k dx \right)u_x- \left(v^k-\int_\mathbb{T}v^kdx\right)v_x\\
& =&
\left(u^k-\int_\mathbb{T}u^k dx \right)(u-v)_x + \left((u^k-v^k)-\int_\mathbb{T}(u^k-v^k) dx \right)v_x\\
\end{eqnarray*}
which equals to 
\begin{equation}\label{repu-v}
\left(u^k-\int_\mathbb{T}u^k dx \right)(u-v)_x + \sum_{j=0}^{k-1}
\left((u-v)u^{k-1-j}v^j-\int_\mathbb{T}(u-v)u^{k-1-j}v^j dx \right)v_x\,.
\end{equation}
For $k+1$ terms in (\ref{repu-v}), repeating similar argument as in the proof of Proposition {\ref{propofw}},  
one obtains, for $s>1/2$, 
\begin{equation}\label{contra}
\|Tu-Tv\|_{Y_s}\leq C\delta^\theta \left( \|u\|^k_{Y_s} +\sum_{j=1}^{k-1}\|u\|_{Y_s}^{k-1-j}\|v\|_{Y_s}^{j+1}\right) \|u-v\|_{Y_s}\,.
\end{equation}
Henceforth, for $\delta>0$ small enough, $T$ is a contraction and the local well-posedness 
follows from Picard's fixed-point theorem.  \\

\section{Proof of Proposition \ref{propofw}}\label{proofP1}
\setcounter{equation}0

From the definition of $w$ in (\ref{defofw}), we may write $\wh w(n, \lambda)$ as
\begin{equation}\label{w}
\sum_{\substack{m+n_1+\cdots+n_k=n\\n_1+\cdots+n_k\neq0}}m\int\widehat{u}(m,\lambda-\lambda_1-\cdots-\lambda_k)
\widehat{u}(n_1,\lambda_1)\cdots\widehat{u}(n_k,\lambda_k)d\lambda_1\cdots d\lambda_k.
\end{equation}

By duality, there exists a sequence 
 $\{A_{n, \lambda}\}$ satisfying 
\begin{equation}\label{AnL}
\sum_{n\in\mathbb Z} \int_{\mathbb R}|A_{n, \lambda}|^2 d\lambda \leq 1\,,
\end{equation}
and $\|w\|_{s, -\frac12}$ is bounded by
\begin{equation}\label{ws1}
\sum_{\substack{m+n_1+\cdots+n_k=n\\n_1+\cdots+n_k\neq0}}\!\int\frac{\langle n\rangle^s |m|}{\langle \lambda-n^3 \rangle^\frac{1}{2}}|\widehat{u}(m,\lambda-\lambda_1-\cdots-\lambda_k)|\cdot
 |\widehat{u}(n_1,\lambda_1)|\cdots|\widehat{u}(n_k,\lambda_k)||A_{n,\lambda}|
d\lambda_1\cdots d\lambda_k d\lambda.
\end{equation}

Since the $X_{s, b}$ is a restriction norm, we may assume that $u$ is supported in $\mathbb T\times[0, \delta]$. 
However, the inverse space-time Fourier transform $|\wh u|^\vee$ in general may not be a function 
with compact support. The following standard trick allows us to assume $|\wh u|^\vee$ has a compact support too.
In fact,   let $\eta$ be a bump function supported on $[-2\delta, 2\delta]$ and with $\eta(t)=1$ in $|t|\leq\delta$. Also $\widehat{\eta}$ is positive. Then $u=u\eta$ and 
$\wh u = \widehat{u}*\widehat{\eta}$.
Thus $|\wh u|\leq |\widehat{u}|*\widehat{\eta}=\left(|\widehat{u}|^\vee\eta\right)^\wedge$.  Whenever we need to
make $|\wh u|^\vee$ to be supported in a small time interval, we replace $ |\wh u|$ by $\left(|\widehat{u}|^\vee\eta\right)^\wedge$ since $ |\widehat{u}|^\vee\eta$ clearly is supported on $\mathbb T\times [-2\delta, 2\delta]$.
This will help us gain a positive power of $\delta$ in our estimates. 
Moreover, without loss of generality we can assume $|n_1|\geq|n_2|\geq\cdots\geq|n_k|$.\\

The trouble occurs mainly because of the factor $ |m|$ resulted from $\p_x u$. The idea is that either 
the factor $\langle \lambda-n^3 \rangle^{-\frac{1}{2}}$ can be used to cancel $|m|$, or $|m|$ can be 
distributed to some of $ \wh u$'s. More precisely, we consider three cases. 
\begin{eqnarray}
  &   |m| < 1000k^2|n_2|\,; &  \label{case1}\\
  &  1000k^2|n_2|\leq  |m| \leq  100k|n_1| \,; &\label{case2}\\
 &    |m|> 100k|n_1|\,. & \label{case3}
\end{eqnarray}

\subsection{Case (\ref{case1})}

This is the simplest case.   In fact, 
In this case,  it is easy to see that
\begin{equation}\label{shift1}
\langle n\rangle^s |m|  \leq   C \langle n_1\rangle^s \langle n_2\rangle^\frac{1}{2} \langle m\rangle^\frac{1}{2}.
\end{equation}
Let 
\begin{eqnarray}
& &F_1(x,t) = \sum_n \int \frac{|A_{n, \lambda}|}{\langle \lambda-n^3\rangle^{\frac12}} e^{i\lambda t}  e^{inx} d\lambda\,;
  \label{defofF} \\
& &
G(x,t) = \sum_n \int \langle n\rangle^\frac{1}{2}|\widehat{u}(n,\lambda)|    e^{i\lambda t}  e^{inx} d\lambda\,
\label{defofG}\\
& &
H(x,t) = \sum_n \int \langle n\rangle^s  |\wh u(n,\lambda)|    e^{i\lambda t}  e^{inx} d\lambda\, \label{defofH}\\
& &
U(x,t) = \sum_n \int    |\wh u(n,\lambda)|    e^{i\lambda t}  e^{inx} d\lambda\,
\label{defofU}
\end{eqnarray}

Then using (\ref{shift1}),  we can estimate (\ref{ws1}) by
\begin{equation}\label{estWs1}
C\!\!\!\!\!\sum_{m+n_1+\cdots+n_k=n}\!\int\!\widehat{F_1}(n,\lambda)\widehat{G}(m,\lambda-\lambda_1-\cdots-\lambda_k)\widehat{H}(n_1,\lambda_1)\widehat{G}(n_2,\lambda_2)
\prod_{j=3}^k \wh U(n_j, \lambda_j) d\lambda_1\cdots d\lambda_k d\lambda\,,
\end{equation}
which clearly equals 
$$
C \int_{\mathbb{T}\times\mathbb{R}} F_1(x,t)G(x,t)^2H(x,t)U(x,t)^{k-2}dxdt \,.
$$
Apply H\"older inequality to majorize it by
$$
C\|F_1\|_4\|G\|_{6+}^2\|H\|_4\|U\|_{6(k-2)-}^{k-2}\,.
$$
Since $U$ is supported on $\mathbb T\times [-2\delta, 2\delta]$, one more use of H\"older inequality yields 
\begin{equation}
(\ref{ws1}) \leq C\delta^\theta\|F_1\|_4\|G\|_{6+}^2\|H\|_4\|U\|_{6(k-2)}^{k-2}\,.
\end{equation}

Let us recall some useful local embedding facts on $X_{s, b}$. 
\begin{eqnarray}
& &X_{0,\frac{1}{3}}\subseteq L_{x,t}^4 \,,\,\,\,\, X_{0+, \frac{1}{2}+}\subseteq L^{6}_{x,t} \,,\,  \,\,\,  \,(t\ \text{local} ) \label{emb1} \\
& &X_{\alpha,\frac{1}{2}}\subseteq L_{x,t}^q,\quad 0<\alpha<\frac{1}{2},\ 2\leq q<\frac{6}{1-2\alpha}\quad   (t\ \text{local}), \label{emb2}\\
& &X_{\frac{1}{2}-\alpha, \frac{1}{2}-\alpha}\subseteq L_t^qL_x^r,\quad 0<\alpha<\frac{1}{2},\ 2\leq q, r<\frac{1}{\alpha} \label{emb3}.
\end{eqnarray}
The two embedding results in (\ref{emb1}) are consequences of the discrete restriction estimates on $L^4$ and $L^6$ respectively. (\ref{emb2}) and (\ref{emb3}) follow by interpolation (see \cite{Tao} for details).
(\ref{emb1}) yields 
$$
\|F_1\|_4\leq C\|F_1\|_{0, \frac13} \leq C\left(  \sum_n \int |A_{n, \lambda}|^2 d\lambda\right)^{1/2}\leq C\,,
$$
and 
$$
\|H\|_4\leq C\|H\|_{0,\frac{1}{3}}\leq C\|u\|_{s,\frac{1}{2}}\leq C\|u\|_{Y_s}\,.
$$
(\ref{emb2}) implies 
$$
\|G\|_{6+}\leq C\|G\|_{0+,\frac{1}{2}}\leq C\|u\|_{s,\frac{1}{2}}\leq C\|u\|_{Y_s}\,.
$$
Using (\ref{emb3}), we get
$$
\|U\|_{6(k-2)}\leq C\|U\|_{\frac{1}{2}-,\frac{1}{2}-}\leq C\|u\|_{s,\frac{1}{2}}\leq C\|u\|_{Y_s}.
$$ 
Henceforth, we have, for the case (\ref{case1}),
\begin{equation}\label{case1est}
(\ref{ws1})\leq C\delta^\theta \|u\|_{Y_s}^{k+1}\,. 
\end{equation}

\subsection{Case (\ref{case2})}\label{Hicase2}

In this case, we should further consider two subcases.
\begin{eqnarray}
  & |m+n_1| \leq  1000k^2|n_2|&   \label{subcase21}\\
&|m+n_1| > 1000k^2|n_2|   &  \label{subcase22}
\end{eqnarray}   
In the subcase (\ref{subcase21}), we use the triangle inequality to get
\begin{equation}
 |n|=|m+n_1+n_2+\cdots +n_k| \leq C|n_2|
\end{equation}
Hence, we have
\begin{equation}
\langle n \rangle^s |m|\leq C\langle n_2\rangle^s \langle m\rangle^{\frac12}\langle n_1\rangle^\frac12\,. 
\end{equation}
Thus this subcase can be treated exactly the  same as the case (\ref{case1}). We omit the details. \\

For the subcase (\ref{subcase22}), the crucial arithmetic observation is 
\begin{equation}\label{arith}
n^3-(m^3+n_1^3+\cdots + n_k^3) =  3(m+n_1)(m+ a)(n_1+a) + a^3-(n_2^3+\cdots +n_k^3)\,,
\end{equation}
where $a=n_2+\cdots +n_k$.  This observation can be easily verified since $n=m+n_1+\cdots+n_k$. 
From (\ref{case2}) and (\ref{subcase22}), we get
\begin{equation}\label{diffL}
\left| n^3- (m^3+n_1^3+\cdots + n_k^3) \right|\geq Ck^2 \langle n_2\rangle |m||n_1| \geq Ck|m|^2\,.
\end{equation}
This implies at least one of following statements holds:
\begin{eqnarray}
 &  \left| \lambda- n^3\right|\geq C |m|^2 \,, &  \label{I} \\
&   \left |(\lambda-\lambda_1-\cdots -\lambda_k) -m^3\right|\geq C|m|^2\,, &  \label{II}\\
&  \exists  i \in \{1, \cdots, k\}\,\,\, {\text {such that}}\,\,\, |\lambda_i - n_i^3|\geq  C|m|^2\,.& \label{III}
\end{eqnarray}

For (\ref{I}), (\ref{ws1}) can be bounded by
\begin{equation}\label{I-est}
\sum_{m+n_1+\cdots+n_k=n}\int \langle n_1 \rangle^s |\widehat{u}(m,\lambda-\lambda_1-\cdots-\lambda_k)|
 |\widehat{u}(n_1,\lambda_1)|\cdots|\widehat{u}(n_k,\lambda_k)||A_{n,\lambda}|
d\lambda_1\cdots d\lambda_k d\lambda.
\end{equation}
Let $F_2$ be defined by
\begin{equation}\label{defofF1}
F_2(x,t) = \sum_n \int |A_{n, \lambda}|e^{i\lambda t}  e^{inx} d\lambda\,.
\end{equation}
Then we represent (\ref{I-est}) as
\begin{equation}\label{I-est2}
\sum_{m+n_1+\cdots+n_k=n}\int  \wh F_2(n, \lambda) \wh U(m, \lambda-\lambda_1-\cdots-\lambda_k)
\wh H(n_1, \lambda_1)\prod_{j=2}^k\wh U(n_j, \lambda_j) d\lambda_1\cdots d\lambda_k d\lambda\,.
\end{equation}
Here $H$ and $U$ are functions defined in (\ref{defofH}) and (\ref{defofU}) respectively.
Clearly (\ref{I-est2}) equals 
\begin{equation}\label{I-est3}
\int_{\mathbb T\times \mathbb R} F_2(x, t) H(x,t) U(x, t)^{k} dx dt\,.
\end{equation}
Utilizing H\"older inequality, we estimate it further by
\begin{equation}
 \|F_2\|_2 \|H\|_4\|U\|_{4k}^k \leq C\delta^\theta \|u\|_{Y_s}^{k+1}\,.
\end{equation}
This yields the desired estimate for the subcase (\ref{I}).\\

For the subcase of (\ref{II}), (\ref{ws1}) is estimated by
\begin{eqnarray*}
&  &\sum_{m+n_1+\cdots+n_k=n}\int\frac{\langle n_1\rangle^s |A_{n, \lambda}|}{\langle \lambda-n^3 \rangle^\frac{1}{2}}\ \langle (\lambda-\lambda_1-\cdots-\lambda_k)-m^3\rangle^{\frac12} |\widehat{u}(m,\lambda-\lambda_1-\cdots-\lambda_k)|
 \\
&  &\,\,\,\,\,\,\,\,\,\hspace{2cm} \cdot |\widehat{u}(n_1,\lambda_1)|\cdots|\widehat{u}(n_k,\lambda_k)|
d\lambda_1\cdots d\lambda_k d\lambda\,, 
\end{eqnarray*}
which is equal to
\begin{equation}\label{II-est}
 \int_{\mathbb T\times \mathbb R} F_1(x,t) G(x,t) H(x,t) U^{k-1}(x,t) dx dt\,.
\end{equation}
Apply H\"older inequality to control (\ref{II-est}) by
\begin{equation}\label{II-est1}
\|F_1\|_4\|G\|_4\|H\|_4 \|U\|_{4(k-1)}^{k-1} \leq C\delta^\theta \|u\|_{Y_s}^{k+1}\,.
\end{equation}
This completes the estimate for the subcase (\ref{II}).\\

For the contribution of (\ref{III}), we only consider $|\lambda_2-n_2^3|\geq C|m|^2$ without 
loss of generality for $i\in\{2, \cdots, k\}$.  
This is because the $|\lambda_1-n_1^3|\geq C|m|^2$ case can be handled similarly as (\ref{II}). 
Hence, in this case, (\ref{ws1}) can be bounded by
$$
\sum_{m+n_1+\cdots+n_k=n}\int\frac{\langle n_1\rangle^s|A_{n,\lambda}|}{\langle \lambda-n^3 \rangle^\frac{1}{2}}\ \langle \lambda_2-n_2^3\rangle^\frac{1}{2} |\widehat{u}(m,\lambda-\lambda_1-\cdots-\lambda_k)|
\prod_{j=1}^k|\widehat{u}(n_j,\lambda_j)|
d\lambda_1\cdots d\lambda_k d\lambda.
$$
Now set a function $I$ by
\begin{equation}\label{defofI}
I(x,t) = \sum_n\int \langle \lambda-n^3\rangle^\frac{1}{2}|\wh u(n, \lambda)|e^{i\lambda t} e^{inx} d\lambda\,.
\end{equation}
Then we estimate (\ref{ws1}) by
\begin{equation}\label{est-III1}
\int_{\mathbb T\times \mathbb R} F_1(x,t)H(x,t)I(x,t)U^{k-1}(x,t)dxdt\,,
\end{equation}
which is majorized by
\begin{equation}
 \|F_1\|_4\|H\|_4\|I\|_2\|U\|_\infty^{k-1}\,.
\end{equation}

Notice this time we cannot simply use H\"older's inequality to get $\delta$ as we did before because there is no way of making any above 4 or 2 even a little bit smaller. But this can be fixed as follows.

First observe that  
$$
\|u\|_{0, 0}\leq \delta^{1/2}\|u\|_{L^2_xL^\infty_t}\leq C\delta^{1/2}\|u\|_{0, \frac12+}\,,
$$
for $u$ is supported in a $\delta$-sized interval in time variable. Thus by interpolation,
we get
\begin{equation}\label{gain}
\|u\|_{0, \frac13}\leq C\delta^{\frac16-}\|u\|_{0, \frac12}\,. 
\end{equation}
Since $U$ can be assumed to be a function supported in a $\delta$-sized time interval, 
we may put the same assumption to $H$. 
Henceforth, we have
\begin{equation}\label{III-est2}
\|H\|_4\leq C\|H\|_{0, \frac13}\leq C\delta^{\frac16-}\|H\|_{0, \frac12}\leq C\delta^{\frac16-}\|u\|_{Y_s}\,. 
\end{equation}
Also note that
\begin{equation}\label{III-est3}
\|I\|_2\leq \|u\|_{0,\frac12} \leq \|u\|_{Y_s}\,.
\end{equation}
and
\begin{equation}\label{III-est4}
\|U\|_{\infty}\leq C\|u\|_{Y_s}\,. 
\end{equation}
From (\ref{III-est2}), (\ref{III-est3}) and (\ref{III-est4}), we can estimate (\ref{ws1}) by
$ C\delta^{\frac16-}\|u\|_{Y_s}^{k+1} $ as desired. Therefore we finish our discussion for 
the case (\ref{case2}).

\subsection{Case (\ref{case3})} The arithmetic observation (\ref{arith}) again plays an 
important role.  In this case, let us further consider two subcases.
\begin{eqnarray}
  &  |m|^2 \leq 1000k^2|n_2|^2 |n_3|    &   \label{subcase31}\\
  &  |m|^2 > 1000k^2|n_2|^2 |n_3|   &  \label{subcase32}
\end{eqnarray}   

For the contribution of (\ref{subcase31}), we observe that from (\ref{subcase31}), 
$$
 |m|^2\leq C|n_1||n_2||n_3|\,,
$$
since $|n_2|\leq |n_1|$. Henceforth we have
\begin{equation}\label{shift3}
|m| = |m|^{\frac13}|m|^{\frac23}\leq C|m|^{\frac13}|n_1|^{\frac13}|n_2|^{\frac13}|n_3|^{\frac13}\,.
\end{equation}
This implies immediately 
\begin{equation}\label{shift31}
 \langle n\rangle^s |m|\leq C|m|^{s+1}\leq \langle m\rangle^\frac{s+1}{3} \langle n_1\rangle^\frac{s+1}{3} \langle n_2\rangle^\frac{s+1}{3} \langle n_3\rangle^\frac{s+1}{3}.
\end{equation}
Introduce a new function $H_1$ defined by
\begin{equation}\label{defofH1}
 H_1(x, t) =\sum_n \int_{\mathbb R} \langle n\rangle^{\frac{s+1}{3}}|\wh u(n, \lambda)| e^{i\lambda t} e^{inx} d\lambda\,.
\end{equation}
As before, in this case, we bound (\ref{ws1}) by
\begin{equation}\label{estCase31}
\int_{\mathbb T\times \mathbb R} F_1(x,t)H_1^4(x,t)U^{k-3}(x,t) dx dt\,.
\end{equation}
Then H\"older inequality yields 
\begin{equation}
(\ref{ws1})\leq C\delta^\theta\|F_1\|_4\|H_1\|_{6+}^4\|U\|_{12(k-3)}^{k-3}\,.
\end{equation}
$\|H_1\|_{6+}\leq C\|u\|_{Y_s}$ because $\frac{s+1}{3}< s$ for $s>1/2$. Hence we obtain 
the desired estimate for the subcase (\ref{subcase31}).\\

We now turn to the contribution of (\ref{subcase32}). Clearly we have
\begin{equation}\label{arith2}
|(n_2+\cdots+n_k)^3-(n_2^3+\cdots+n_k^3)| \leq 10k|n_2|^2|n_3|\,,
\end{equation}
since $|n_2|\geq |n_3|\geq\cdots\geq |n_k|$. 
From the crucial arithmetic observation (\ref{arith}), (\ref{arith2}), and (\ref{subcase32}), we have
\begin{equation}\label{arith3}
 \left|n^3 -\left(m^3+n_1^3+\cdots+n_k^3\right)\right| \geq  Ck|m|^2\,.
\end{equation} 
This is same as (\ref{diffL}). Hence again we reduce the problems to (\ref{I}), (\ref{II}), and (\ref{III}), which are all done in Subsection \ref{Hicase2}. Therefore we finish the case of (\ref{case3}).

Putting all cases together, we obation
\begin{equation}\label{ws1est}
\|w\|_{s, -\frac12}\leq C\delta^\theta \|u\|_{Y_s}^{k+1}\,.
\end{equation}

Finally we need to estimate
\begin{equation}\label{extra}
\left(\sum_n\langle n\rangle^{2s}\left(\int\frac{|\widehat{w}(n,\lambda)|}{\langle\lambda-n^3\rangle}d\lambda\right)^2\right)^\frac{1}{2}.
\end{equation}
Let $\{A_n\}$ be a sequence $\{A_n\}$ with $\left(\sum_n|A_n|^2\right)^\frac{1}{2}\leq1$.
By duality, it suffices to  estimate
\begin{equation}\label{extra1}
\sum_{\substack{m+n_1+\cdots+n_k=n\\n_1+\cdots+n_k\neq0}}\int\frac{\langle n\rangle^s |m|}{\langle \lambda-n^3 \rangle}|\widehat{u}(m,\lambda-\lambda_1-\cdots-\lambda_k)| |\widehat{u}(n_1,\lambda_1)|\cdots|\widehat{u}(n_k,\lambda_k)||A_n| d\lambda_1\cdots d\lambda_k d\lambda\,.
\end{equation}
Again, without loss of generality, we can assume $|n_1|\geq\cdots\geq|n_k|$. 
We still go through the cases used previously. Almost all cases are similar and there are only two exceptions. In fact, we only need to replace $F_1$ by $F_3$ in each case where $\|F_1\|_4$ is employed. Here $F_3 $ is given by
\begin{equation}\label{defofF2}
\sum_n\int_{\mathbb R} \frac{|A_n|}{\langle\lambda-n^3\rangle} e^{i\lambda t}e^{inx} d\lambda\,.
\end{equation}
Then all those cases can be done because
\begin{equation}\label{estF2}
\|F_3\|_4\leq C\|F_3\|_{0,\frac{1}{3}}=\left(\sum_n|A_n|^2\int\frac{1}{\langle\lambda-n^3\rangle^\frac{4}{3}}d\lambda\right)^\frac{1}{2} \leq C\,.
\end{equation}
The only exceptions are 
\begin{eqnarray}
& |\lambda - n^3 |\geq C|n_1||m| \,\,\,{\text{and }}\,\,\, |n_2|\ll|m|\leq C|n_1| & \label{ex1}\\
& |\lambda - n^3 |\geq Cm^2 \,\,\,{\text{and }}\,\,\,  |m|\gg |n_1|   & \label{ex2}
\end{eqnarray}
For the case of (\ref{ex1}), we define
\begin{equation}\label{defofF3}
F_4(x,t)= \sum_n \int_{\mathbb R} \frac{\langle n\rangle^{\frac12} \Id_{\{|\lambda-n^3|\geq C\langle n\rangle\}}}
{|\lambda-n^3 |} e^{i\lambda t} e^{inx}d\lambda 
\end{equation}
A direct calculation gives 
\begin{equation}\label{estF3}
\|F_4\|_2\leq \left(\sum_n \int_{|\lambda-n^3|\geq C\langle n\rangle} \frac{\langle n\rangle|A_n|^2}{|\lambda-n^3|^2} d\lambda \right)^{1/2}\leq C\,.
\end{equation}
In this case, clearly
\begin{equation}\label{shift5}
\langle n\rangle^s|m|\leq \langle n\rangle^{\frac12}\langle n_1\rangle^{s}\langle m\rangle^{\frac12}\,. 
\end{equation}
Then (\ref{extra1}) is dominated by
\begin{equation}\label{est-ex1}
\int_{\mathbb T\times \mathbb R} F_4(x, t)G(x,t)H(x,t) U^{k-1}(x,t)dx dt\,.
\end{equation}
By a use of H\"older inequality and (\ref{estF3}), one gets
\begin{equation}\label{est-ex2}
(\ref{extra1})\leq C\|F_4\|_2\|H\|_4\|G\|_6\|U\|_{12(k-1)}^{k-1}\leq C\delta^\theta\|u\|_{Y_s}^{k+1}\,.
\end{equation}
This finishes the proof for the case (\ref{ex1}).\\

For the contribution of (\ref{ex2}), we set 
\begin{equation}\label{defofF4}
F_5(x,t)=\sum_n \int_{\mathbb R} \frac{\langle n\rangle  \Id_{\{|\lambda-n^3|\geq C\langle n\rangle^2\}}}
{|\lambda-n^3 |} e^{i\lambda t} e^{inx}d\lambda \,.
\end{equation}
Clearly 
\begin{equation}\label{estF4}
\|F_5\|_2\leq \left(\sum_n \int_{|\lambda-n^3|\geq C\langle n\rangle^2} \frac{\langle n\rangle^2|A_n|^2}{|\lambda-n^3|^2} d\lambda \right)^{1/2}\leq C\,.
\end{equation}
In this case, we have $|\lambda-n^3|\geq C\langle n\rangle^2 $ since $|n|\sim |m|$, henceforth, by the observation of 
$$
 \langle n\rangle^s |m|\leq C \langle m\rangle^s \langle n\rangle\,,
$$
we estimate (\ref{extra1}) by
\begin{equation}\label{est-ex3}
\int_{\mathbb T\times \mathbb R} F_5(x,t)H(x,t)U^k(x,t)dx dt\,.
\end{equation}
Using H\"older inequality and (\ref{estF4}), we have
\begin{equation}\label{est-ex21}
(\ref{extra1}) \leq C\|F_5\|_2\|H\|_4\|U\|_{4k}^{4k}\leq C\delta^{\theta}\|u\|_{Y_s}^{k+1}\,,
\end{equation}
as desired. Hence
\begin{equation}\label{extra-est}
\left(\sum_n\langle n\rangle^{2s}\left(\int\frac{|\widehat{w}(n,\lambda)|}{\langle\lambda-n^3\rangle}d\lambda\right)^2\right)^\frac{1}{2} \leq C\delta^{\theta}\|u\|_{Y_s}^{k+1}.
\end{equation}
Therefore we complete the proof of Proposition {\ref{propofw}} by combining (\ref{ws1est}) and (\ref{extra-est}). \\

\section{Proof of Theorem \ref{LWP-KdV-F}}\label{LWP2}
\setcounter{equation}0

The argument is similar to those in Section \ref{LWP1}. 
By using a gauge transform as in (\ref{Gauge1}) with $v^k$ replaced by $F(v)$, the well-posedness of
(\ref{KdV0}) is equivalent to the well-posedness of the following equation:
\begin{equation}\label{KdV0-1}
\begin{cases}
u_t+u_{xxx}+ \left(F(u)-\int_{\mathbb T} F(u) dx\right)u_x=0\\
u(x,0)=\phi(x),\qquad x\in\mathbb{T},\ t\in\mathbb{R}\,.
\end{cases}
\end{equation}

Now the nonlinear function $w$ is defined by
\begin{equation}\label{defofwF}
 w =\p_x u \left( F(u)-\int_{\mathbb T} F(u) dx \right)\,.
\end{equation}

Let $T_F$ be an operator given by
\begin{equation}\label{defofT-F}
T_Fu(x,t):= \psi_\delta(t)e^{-t\partial_x^3}\phi(x)-\psi_\delta(t)\int_0^te^{-(t-\tau)\partial_x^3}w(x,\tau)d\tau.
\end{equation}

As in Section \ref{LWP1}, the local well-posedness relies on the following 
proposition.

\begin{proposition}\label{propF}
Let $s>1/2$. There exists $\theta>0$ such that, for the nonlinear function $w$ 
 given by (\ref{defofwF}) and any $ u$ satisfying $\|u\|_{Y_s}\leq C_0\|\phi\|_{H^s}$,   
\begin{equation}\label{Main-estF}
\|w\|_{s, -\frac12} + \left(\sum_n\langle n\rangle^{2s}\left(\int\frac{|\widehat{w}(n,\lambda)|}{\langle\lambda-n^3\rangle}d\lambda\right)^2\right)^\frac{1}{2} \leq C(\|\phi\|_{H^s}, F)
\delta^\theta \|u\|_{Y_s}^{4},
\end{equation}
provided $F\in C^5$. 
Here $C_0$ is a suitably large constant, and $C(\|\phi\|_{H^s}, F)$ is a constant independent of $\delta$ and $u$, but 
may depend on $\|\phi\|_{H^s}$ and $F$. 
\end{proposition}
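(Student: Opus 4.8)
The plan is to reduce the $C^5$ case to the polynomial case already handled in Section \ref{proofP1} by means of a Taylor expansion of $F$. Since we only need to estimate $w$ for $u$ lying in a ball $\|u\|_{Y_s}\leq C_0\|\phi\|_{H^s}$, and $s>1/2$ guarantees (by Sobolev embedding on $\mathbb T$, together with the local embeddings (\ref{emb1})--(\ref{emb3})) that such $u$ is bounded in $L^\infty_{x,t}$ by a constant $R=R(\|\phi\|_{H^s})$, I would first write, by Taylor's theorem with integral remainder about $0$,
\begin{equation}
F(u) = \sum_{j=0}^{4}\frac{F^{(j)}(0)}{j!}u^j + \frac{1}{4!}\int_0^1 (1-\sigma)^4 F^{(5)}(\sigma u)\,d\sigma\ \cdot u^5 =: P(u) + R(u)u^5,
\end{equation}
where $P$ is a degree-$4$ polynomial with coefficients controlled by $F$, and $R(u)$ is a bounded function of $u$ with $\|R(u)\|_\infty\leq \tfrac{1}{4!}\sup_{|y|\leq R}|F^{(5)}(y)|=:C(\|\phi\|_{H^s},F)$. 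Correspondingly $w = \p_x u\,(F(u)-\int F(u)) $ splits into a polynomial part $w_P := \p_x u\,(P(u)-\int P(u))$ and a remainder part $w_R := \p_x u\,(R(u)u^5 - \int R(u)u^5)$.

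For the polynomial part $w_P$, each monomial $u^j\p_x u$ with $0\leq j\leq 4$ is exactly of the type treated in the proof of Proposition \ref{propofw} (with $k=j$ there, $0\le k\le 4$); indeed the estimates in Section \ref{proofP1} only used $s>1/2$, the case analysis (\ref{case1})--(\ref{case3}), the arithmetic identity (\ref{arith}), and the embeddings (\ref{emb1})--(\ref{emb3}), none of which is sensitive to the precise value of $k$. Summing the finitely many resulting bounds $C\delta^\theta\|u\|_{Y_s}^{j+1}$ and absorbing powers of $\|u\|_{Y_s}\leq C_0\|\phi\|_{H^s}$ into the constant, one gets a bound of the form $C(\|\phi\|_{H^s},F)\,\delta^\theta\|u\|_{Y_s}^{4}$ for the $w_P$-contribution to both terms on the left of (\ref{Main-estF}).

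The genuinely new point, and the main obstacle, is the remainder term $w_R$, because $R(u)=\tfrac{1}{4!}\int_0^1(1-\sigma)^4 F^{(5)}(\sigma u)\,d\sigma$ is not a polynomial in $u$ and so has no clean Fourier-support structure; the arithmetic observation (\ref{arith}), which was the engine for gaining back the derivative $|m|$ in the hard cases, is no longer literally available for $w_R$. My strategy here would be to avoid the arithmetic altogether for this term and instead estimate $w_R$ crudely but with room to spare: write $w_R = G(u)\,\p_x u$ with $G(u):=R(u)u^5-\int R(u)u^5\,dx$, note that $F\in C^5$ makes $y\mapsto F^{(5)}(y)$ (hence $R$, hence $G$) a Lipschitz Nemytskii-type operator on the relevant balls, and control $\|w_R\|_{s,-1/2}$ by placing $\p_x u$ in an $X_{s-1,1/2}$-type factor, $G(u)$ in $L^\infty_{x,t}$ (or a high-regularity factor $X_{s,1/2}$ via a fractional Leibniz/paraproduct estimate on $\mathbb T$), and the dual factor $\langle\lambda-n^3\rangle^{-1/2}$ in $X_{0,1/3}\hookrightarrow L^4$; the extra factor $u^5$ beyond $u$ gives surplus powers of $\|u\|_{Y_s}$ and, crucially, surplus room to extract $\delta^\theta$ via the $\delta$-localization trick (\ref{gain}) and Hölder in time on the $\delta$-supported functions, exactly as in the display leading to (\ref{III-est2}). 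The second term on the left of (\ref{Main-estF}) is handled the same way with $F_1$ replaced by $F_3$ (cf.\ (\ref{estF2})) and $F_4,F_5$ (cf.\ (\ref{estF3}),(\ref{estF4})) in the two exceptional subcases, noting that $G(u)$ being $L^\infty$-bounded only strengthens these estimates. The requirement $F\in C^5$ enters precisely at this Taylor step: four derivatives reconstruct the quartic polynomial part whose worst monomial $u^4\p_x u$ is the borderline case of Section \ref{proofP1}, and the fifth derivative supplies the bounded, Lipschitz remainder coefficient needed to close the fixed-point argument (the $\|u\|_{Y_s}^4$ on the right, rather than a higher power, reflects that only the polynomial part dictates the homogeneity, the remainder being strictly better behaved).
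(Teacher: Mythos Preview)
Your reduction of the polynomial part $w_P$ to Proposition~\ref{propofw} is fine (modulo the trivial observation that the cases $k=0,1,2$ there are easier, not harder), but the treatment of the remainder $w_R=\bigl(R(u)u^5-\int R(u)u^5\bigr)u_x$ has a genuine gap. You propose to place $R(u)$ (or $G(u)$) in $L^\infty_{x,t}$ and absorb the derivative elsewhere; this cannot work, because multiplication by an $L^\infty$ function is \emph{not} bounded on $X_{s,-1/2}$: the weight $\langle\lambda-n^3\rangle^{-1/2}$ is destroyed under convolution with $\widehat{R(u)}$, so there is no inequality of the type $\|R(u)\,h\|_{s,-1/2}\le\|R(u)\|_\infty\|h\|_{s,-1/2}$ to fall back on. Your alternative of putting $G(u)$ in $X_{s,1/2}$ via fractional Leibniz also fails: $R(u)$ involves $F^{(5)}(\sigma u)$ with $F^{(5)}$ merely continuous, so the Nemytskii map $u\mapsto R(u)$ does not land in $H^s_x$, let alone $X_{s,1/2}$. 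The heuristic that ``extra powers $u^5$ give room to spare'' misidentifies the difficulty: the recovery of the lost derivative from $u_x$ in the hard cases of Section~\ref{proofP1} comes entirely from the arithmetic identity (\ref{arith}), which requires tracking \emph{all} frequencies contributing to $n$. Once an opaque factor $R(u)$ injects an unlocalized frequency $m'$ into the convolution, the identity $n^3-m^3-\sum n_j^3=\cdots$ no longer applies and the $\langle\lambda-n^3\rangle^{-1/2}$ gain evaporates.

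The paper's proof (Sections~\ref{proofPropF}--\ref{subsection3F}) takes a completely different route, precisely to retain frequency control on the ``non-polynomial'' piece. Instead of Taylor expansion, it iterates three times the Bony-type paralinearization $F(P_Ku)-F(P_{K/2}u)=F_1(P_Ku,P_{K/2}u)\,u_K$, obtaining
\[
F(u)=\sum_{K_1\ge K_2\ge K_3}G_{K_3}\,u_{K_1}u_{K_2}u_{K_3}+(\text{lower order}),\qquad G_{K_3}=F_3(P_{4K_3}u,\dots,P_{K_3/8}u).
\]
The crucial point is that $G_{K_3}$ depends only on $P_{\lesssim K_3}u$, so its spatial frequency $M$ is effectively tied to $K_3$; this is exactly the information your $R(u)$ lacks. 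The paper then runs a dyadic case analysis in $K,K_0,K_1,K_2,K_3,M$ (Sections~\ref{subsection1F}--\ref{subsection3F}), using Littlewood--Paley square functions, the Fefferman--Stein vector maximal inequality, and Schur tests. In the hardest subcase (\ref{IVF})/(\ref{IV3F}) where the ``bad'' frequency sits on $G_{K_3}$, a \emph{further} temporal paralinearization (\ref{decompGK3}) peels off a factor $u_{jK_3,L}$, leaving a new bounded coefficient $H_{K_3,L}$. The hypothesis $F\in C^5$ enters not through the order of a Taylor polynomial but through the chain of divided differences: one needs $F_3\in C^2$ for bounds like $\|\partial_x^{3/2}G_{K_3}\|_\infty\lesssim K_3^{3/2}$ in (\ref{est4-case31F}), and $F_4\in C^1$ for (\ref{L4ofHK3})--(\ref{L2hKL}); together these force five derivatives on $F$.
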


The constant $C(\|\Phi\|_{H^s}, F)$ will be specified in the proof of Proposition \ref{propF}. 
We postpone the proof of Proposition \ref{propF} to Section \ref{proofPropF}, and return to the proof of Theorem \ref{LWP-KdV-F}.
Proposition \ref{propF} implies that for $\delta$ sufficiently small, $T_F$ maps a ball $\{u\in Y_s: 
 \|u\|_{Y_s}\leq C_0\|\phi\|_{H^s}\}$ into itself. Moreover, using Lemma {\ref{estNu}} and repeating similar argument as in the proof of Proposition 
{\ref{propF}},  one obtains, for $s>1/2$ and $F\in C^5$, 
\begin{equation}\label{contra-F}
\|T_Fu-T_Fv\|_{Y_s}\leq  \delta^\theta  C(\|\phi\|_{H^s}, F) \|u-v\|_{Y_s}\,.
\end{equation}
for all $u, v$ in the ball $\{u\in Y_s: 
 \|u\|_{Y_s}\leq C_0\|\phi\|_{H^s}\}$.
Therefore, for $\delta>0$ small enough, $T_F$ is a contraction on the ball and the local well-posedness 
again follows from Picard's fixed-point theorem. This completes the proof of Theorem {\ref{LWP-KdV-F}}.  \\

\section{Proof of Proposition \ref{propF}}\label{proofPropF}
\setcounter{equation}0

First we introduce a decomposition of $F(u)$, which was used by Bourgain.  
Let $K$ be a dyadic number, and define a Fourier multiplier operator $P_K$ by
setting
\begin{equation}\label{defofPK}
P_Ku(x, t) = \int \psi_K(y)u(x-y, t)dy\,.
\end{equation}
Here the Fourier transform of $\psi_K$ is a standard bump function supported on $[-2K, 2K]$ and $\wh {\psi_K} (x)=1$ for $x\in[-K, K]$.
Let $u_K$ denote the Littlewood-Paley Fourier multiplier, that is, 
\begin{equation}\label{defofuK}
u_K = P_Ku- P_{K/2}u\,.
\end{equation}
Then we may decompose $F(u)$ by
\begin{eqnarray*}
F(u) & = &\sum_{K} \left( F(P_Ku)-F(P_{K/2}u)\right)\\
    & =& \sum_K F_1(P_Ku, P_{K/2}u) u_K + R_1\,,
\end{eqnarray*}
where $R_1$ is a function independent of the space variable $x$. Repeating this procedure for
$F_1$, we obtain 
\begin{eqnarray*}
 F(u) &= & \sum_{K_1\geq K_2}F_2(P_{2K_2}u, \cdots, P_{K_2/4}u)u_{K_1}u_{K_2} + \sum_{K_1}R_{2}u_{K_1} + R_1   \\
 & = & \sum_{K_1\geq K_2\geq K_3}F_3(P_{4K_3}u, \cdots, P_{K_3/8}u)u_{K_1}u_{K_2}u_{K_3}  \\
 & &   + \sum_{K_1\geq K_2}R_{3}u_{K_1}u_{K_2}+ \sum_{K_1}R_{2}u_{K_1} + R_1  
\end{eqnarray*}
where $R_1, R_{2}, R_{3}$ are functions independent of the space variable. \\
Set
\begin{equation}\label{G3}
 G_{K_3}(x,t) = F_3(P_{4K_3}u, \cdots, P_{K_3/8}u)\,.
\end{equation}

Hence we represent $w$ defined in (\ref{defofwF}) as 
\begin{eqnarray*}
w & =& \sum_{K_0, K_1\geq K_2\geq K_3}\p_xu_{K_0}
     \left( u_{K_1}u_{K_2}u_{K_3} G_{K_3} -\int_{\mathbb T}u_{K_1}u_{K_2}u_{K_3} G_{K_3} dx\right) \\
 & & + \sum_{K_0, K_1\geq K_2}\p_xu_{K_0} \left(u_{K_1}u_{K_2}-\int_{\mathbb T} u_{K_1}u_{K_2}dx  \right)R_{3}\\
 & & + \sum_{K_0, K_1}\p_xu_{K_0} \left(u_{K_1}-\int_{\mathbb T} u_{K_1}dx  \right)R_{2}\,.
\end{eqnarray*}
The main contribution of $w$ is from the first term. The remaining terms can be handled by the method 
presented in Section \ref{proofP1} because $R_{2}, R_{3}$ are functions independent of the space variable $x$ (actually they only depend on the conserved quantity $\int_\mathbb T udx$). Hence in what follows we will only focus on estimating the first term--the most difficult one.  
Denote the first term by $w_1$, i.e., 
\begin{equation}\label{defofw1}
w_1= \sum_{K_0, K_1\geq K_2\geq K_3}\p_xu_{K_0}
     \left( u_{K_1}u_{K_2}u_{K_3} G_{K_3} -\int_{\mathbb T} u_{K_1}u_{K_2}u_{K_3} G_{K_3} dx\right)\,.
\end{equation}

We should prove
\begin{equation}
\|w_1\|_{s, -\frac12} + 
  \left( \sum_{n} \langle n\rangle^{2s} \left( \int \frac{|\wh w_1(n, \lambda)|}{\langle \lambda-n^3\rangle}d\lambda \right)^2\right)^{1/2}\leq  \delta^{\theta} C(\|\phi\|_{H^s}, F) \|u\|_{Y_s}^{4}\,.
\end{equation}
In order to specify the constant $ C(\|\phi\|_{H^s}, F)$,  we define ${\mathfrak M} $ by setting
\begin{equation}
\mathfrak M =\sup \left\{ |D^\alpha F_3(u_1,\cdots, u_6)|  :  u_j\,\,\,{\text{satisfies}} \,\,\, \|u_j\|_{Y_s}\leq C_0\|\phi\|_{H^s}\,\,\, {\text{for all} }\,\,\, j=1, \cdots, 6;\,\,\, \alpha \right\}\,.
\end{equation}
Here $ D^\alpha=\p_{x_1}^{\alpha_1}\cdots\p_{x_6}^{\alpha_6} $ and $\alpha $ is taken over all tuples $ (\alpha_1, \cdots, \alpha_6)\in (\mathbb N \cup \{0\})^6$ with 
$0\leq \alpha_j\leq 2$ for all $j\in\{1, \cdots, 6\}$.
$\mathfrak M$ is a real number. This is because,  for $s>1/2$, $\|u\|_{Y_s}\leq 2\|\phi\|_{H^s}$ yields that $u$ is bounded by $C\|\phi\|_{H^s}$,  and the previous claim follows from  $F_3\in C^2$. \\

In order to bound $\|w_1\|_{s, -\frac12}$, by duality, it suffices to bound 
\begin{equation}\label{w1s1}
\begin{aligned}
 & \sum_{\substack { K_0, K_1\geq K_2\geq K_3 \\  n_0+n_1+n_2+n_3+m=n\\
n_1+n_2+n_3+m\neq 0 }}\int \frac{A_{n, \lambda}\langle n\rangle^s n_0}{\langle \lambda -n^3\rangle^{\frac12}}
 {\wh u_{K_0}}(n_0, \lambda-\lambda_1-\lambda_2-\lambda_3-\mu) \\
 & \hspace{2cm}\cdot \prod_{j=1}^3{\wh u_{K_j}}(n_j, \lambda_j) {\wh{G_{K_3}}}(m, \mu)d\lambda_1\cdots d\lambda_4 d\lambda
 d\mu\,,
\end{aligned}
\end{equation}
where $A_{n, \lambda}$ satisfies 
$$
 \sum_n \int |A_{n, \lambda}|^2 d\lambda =1\,. 
$$


The trouble maker is $G_{K_3}$ since there is no way to find a suitable upper bound for
its $X_{s, b}$ norm. Because of this, the method in Section \ref{proofP1} is no more valid, and
 we have to treat $m$ and $\mu$ differently from $n$ and $\lambda$ respectively. 
A delicate analysis must be done for overcoming the difficulty caused by $G_{K_3}$. For simplicity, we assume that $\delta=1$. One can 
modify the argument to gain a decay of $\delta^\theta$ by using the technical treatment from
Section \ref{proofP1}.\\

For a dyadic number $M$, define the Littlewood-Paley Fourier multiplier by
\begin{equation}\label{defofgKM}
g_{K_3, M} = P_MG_{K_3}- P_{M/2}G_{K_3}= (G_{K_3})_M\,.
\end{equation}
Let $v$ be defined by
\begin{equation}\label{defofv}
v(x,t)=\sum_n\int \frac{A_{n, \lambda}}{\langle \lambda-n^3\rangle^{\frac12}} e^{i\lambda t} e^{inx}d\lambda\,.
\end{equation}
To estimate (\ref{w1s1}), it suffices to estimate
\begin{equation}\label{w1s2}
\begin{aligned}
 & \sum_{\substack { K, K_0, K_1\geq K_2\geq K_3, M \\  n_0+n_1+n_2+n_3+m=n\\
n_1+n_2+n_3+m\neq 0 }}\int \wh{\p_x^s v_K}(n, \lambda)\wh{\p_x u_{K_0}}(n_0, \lambda-\lambda_1-\lambda_2-\lambda_3-\mu) \\
&  \hspace{2cm}\prod_{j=1}^3{\wh u_{K_j}}(n_j, \lambda_j) {\wh{g_{K_3, M}}}(m, \mu)d\lambda_1\cdots d\lambda_4 d\lambda d\mu\,.
\end{aligned}
\end{equation}
Here $K$ is a dyadic number.

As we did in Section \ref{proofP1}, we consider three cases:
\begin{eqnarray}
  &   K_0 < 2^{100} K_2\,; &  \label{case1F}\\
  &  2^{100} K_2\leq  K_0 \leq  2^{10} K_1 \,; &\label{case2F}\\
 &    K_0 > 2^{10}K_1 \,. & \label{case3F}
\end{eqnarray}
The rest part of the paper is devoted to a proof of these three cases. In what follows, we will only provide the details for the estimates of $\|w_1\|_{s, -\frac12}$ with $1/2<s<1$ (the case $s\geq 1$ is
easier).  For the desired estimate of
 $$\left( \sum_{n} \langle n\rangle^{2s}\left( \int \frac{|\wh w_1(n, \lambda)|}{\langle \lambda-n^3\rangle}d\lambda \right)^2\right)^{1/2}\,,$$ simply 
replace $v$ by 
\begin{equation}\label{defofv1}
 v_1(x, t)=\sum_n\int \frac{C_{n, \lambda} A_n }{\langle \lambda -n^3\rangle} e^{i\lambda t} e^{inx} d\lambda\,,
\end{equation} 
and then the desired estimate follows similarly.  Here $C_{n, \lambda}\in \mathbb C$ satisfies 
$\sup_{\lambda}|C_{n, \lambda}| \leq 1$ and $\{A_n\}$ satisfies
$\sum_{n } |A_n|^2\leq 1$.  \\

\section{Proof of Case (\ref{case1F})}\label{subsection1F}
\setcounter{equation}0

In this case,  we should consider further two subcases:
\begin{eqnarray}
 & M\leq 2^{10} K_1 \,.\label{case11F}   &\\
 & M > 2^{10} K_1\,. \label{case12F} & 
\end{eqnarray}

For the contribution of (\ref{case11F}), 
noticing 
$K\leq CK_1 $ in this subcase, we then estimate (\ref{w1s2}) by
\begin{equation}\label{est-case11F1}
\sum_{K_1\geq K_2\geq K_3} \int_{\mathbb T\times \mathbb R} \left| \left(\sum_{K\leq CK_1}
 \p_x^{s}v_{K}\right) \left( \sum_{K_0\leq CK_2}\p_x u_{K_0} \right)u_{K_1} u_{K_2} u_{K_3} 
 \left(P_{2^{10}K_1}G_{K_3}\right)  \right| dx dt\,,
\end{equation}
which is bounded by
\begin{equation}\label{est-case11F2}
\sum_{K_3}\|u_{K_3}\|_\infty  \| G_{K_3}\|_\infty\int_{\mathbb T\times \mathbb R}
\sum_{K_1} \sum_{K\leq CK_1} K^sv^*_{K} |u_{K_1}|
\sum_{K_2}\sum_{K_0\leq CK_2} K_0 u^*_{K_0}| u_{K_2}| dx dt\,,
\end{equation}
where $f^*$ stands for the Hardy-Littlewood maximal function of $f$. By the Sch\"ur test, 
(\ref{est-case11F2}) can be estimated by 
\begin{equation}\label{est-case11F3}
 \begin{aligned} &\sum_{K_3}K_3^{-\frac{2s-1}{2}} \|u\|_{Y_s}{\mathfrak M}
 \int \left(\sum_{K}|v^*_{K}|^2 \right)^{\frac12} \left(\sum_{K_1}K_1^{2s}|u_{K_1}|^2 \right)^{\frac12} \\
& \hspace{2cm}\cdot \left(\sum_{K_0}K_0|u_{K_0}^*|^2 \right)^{\frac12}  \left(\sum_{K_2}K_2|u_{K_2}|^2 \right)^{\frac12} dx dt\,.
\end{aligned}
\end{equation}
Since $s>1/2$, we then obtain,  by a use of H\"older inequality, that (\ref{est-case11F2}) is
majorized by
\begin{equation}\label{est-case11F4}
\begin{aligned}
& C{\mathfrak M} \|u\|_{Y_s} \left\|\left(\sum_{K}|v^*_{K}|^2 \right)^{\frac12} \right \|_4 
  \left \|\left(\sum_{K_1}K_1^{2s}|u_{K_1}|^2 \right)^{\frac12}   \right\|_4 \\
& \hspace{2cm} \left\| \left(\sum_{K_0}K_0|u_{K_0}^*|^2 \right)^{\frac12}\right\|_4
   \left \|  \left(\sum_{K_2}K_2|u_{K_2}|^2 \right)^{\frac12} \right \|_4\,.
\end{aligned}
\end{equation}
Observe that
\begin{equation}\label{sq1}
\left\|\left(\sum_{K}|v^*_{K}|^2 \right)^{\frac12} \right \|_4 \leq 
 \left\|\left(\sum_{K}|v_{K}|^2 \right)^{\frac12} \right \|_4\leq C\|v\|_4\leq C\| v\|_{0,\frac13}\leq C\,. 
\end{equation}
Here the first inequality is obtained by using Fefferman-Stein's vector-valued inequality on the 
maximal function, and the second one is a consequence of classical Littlewood-Paley theorem.
Similarly, 
\begin{equation}\label{sq2}
\left\|\left(\sum_{K_0}K_0|u^*_{K_0}|^2 \right)^{\frac12} \right \|_4 \leq 
 \left\|\left(\sum_{K_0}K_0|u_{K_0}|^2 \right)^{\frac12} \right \|_4\leq C\|\p_x^{1/2}u\|_4\leq C\| u\|_{\frac12,\frac13}\leq C\|u\|_{Y_s}\,, 
\end{equation}
and 
\begin{equation}\label{sq3}
 \left\|\left(\sum_{K_1}K_1^{2s}|u_{K_1}|^2 \right)^{\frac12} \right \|_4\leq C\|\p_x^{s}u\|_4
 \leq C\| u\|_{s,\frac13}\leq C\|u\|_{Y_s}\,. 
\end{equation}
Hence from (\ref{sq1}), (\ref{sq2}) and (\ref{sq3}), we have
\begin{equation}
(\ref{w1s2})\leq C{\mathfrak M} \|u\|_{Y_s}^4\,. 
\end{equation}

For the contribution of (\ref{case12F}), since in this subcase $K\leq CM$, we estimate 
(\ref{w1s2}) by
\begin{equation}\label{est-case12F1}
\sum_{K_1}\|u_{K_1}\|_\infty   \int_{\mathbb T\times \mathbb R}
\sum_{K_3\leq K_1}|u_{K_3}|\sum_{M} \sum_{K\leq CM} K^sv^*_{K} |g_{K_3, M}|   
\sum_{K_2}\sum_{K_0\leq CK_2} K_0 u^*_{K_0}| u_{K_2}| dx dt\,,
\end{equation}
which is bounded by
\begin{equation}\label{est-case12F2}
\begin{aligned}
 &\sum_{K_1}K_1^{-\frac{2s-1}{2}} \|u\|_{Y_s} \int_{\mathbb T\times \mathbb R}
\sum_{K_3\leq K_1}|u_{K_3}| \left(\sum_K|v^*_K|^2\right)^{1/2}
\left(\sum_{M} M^{2s} |g_{K_3, M}|^2\right)^{1/2}   \\
& \hspace{2cm} \left(\sum_{K_0} K_0| u^*_{K_0}|^2\right)^{1/2} \left(\sum_{K_2}K_2| u_{K_2}|^2\right)^{1/2} dx dt\,.
\end{aligned}
\end{equation}
By a use of Cauchy-Schwarz inequality, (\ref{est-case12F2}) is estimated by
\begin{equation}\label{est-case12F3}
\begin{aligned}
 &\sum_{K_1}K_1^{-\frac{2s-1}{2}}\|u\|_{Y_s}  \int_{\mathbb T\times \mathbb R}
 \left(\sum_K|v^*_K|^2\right)^{1/2}\left(\sum_{K_0} K_0| u^*_{K_0}|^2\right)^{1/2} \left(\sum_{K_2}K_2| u_{K_2}|^2\right)^{1/2}
\\
 & \hspace{2cm}\left(\sum_{K_3}K_3^{2s}|u_{K_3}|^2\right)^{1/2} \left(  \sum_{K_3\leq K_1}\sum_{M} \frac{M^{2s}}{K_3^{2s}} |g_{K_3, M}|^2     \right)^{1/2}   
   dx dt\,.
\end{aligned}
\end{equation}
Using H\"older inequality, we then bound it further by
\begin{equation}\label{est-case12F4}
\begin{aligned}
 &\sum_{K_1}K_1^{-\frac{2s-1}{2}}\|u\|_{Y_s}  
  \left\| \left(\sum_K|v^*_K|^2\right)^{1/2} \right\|_4   
  \left\|  \left(\sum_{K_2} K_0| u^*_{K_0}|^2\right)^{1/2}  \right\|_6
  \left\| \left(\sum_{K_2}K_2| u_{K_2}|^2\right)^{1/2} \right\|_6 \\
 & \hspace{2cm}  \left\| \left(\sum_{K_3}K_3^{2s}|u_{K_3}|^2\right)^{1/2} \right\|_4
 \left\| \left(  \sum_{K_3\leq K_1}\sum_{M} \frac{M^{2s}}{K_3^{2s}} |g_{K_3, M}|^2     \right)^{1/2} \right\|_6  \,,
\end{aligned}
\end{equation}
which is majorized by
\begin{eqnarray*}
 & &\sum_{K_1}K_1^{-\frac{2s-1}{2}}\|u\|_{Y_s}^4\sum_{K_3\leq K_1}K_3^{-s}\left\| \left( \sum_{M} M^{2s}|g_{K_3, M}|^2     \right)^{1/2}\right\|_6 \\
 & \leq & \sum_{K_1}K_1^{-\frac{2s-1}{2}}\|u\|_{Y_s}^4\sum_{K_3\leq K_1}K_3^{-s}\left\| \p_x^{s}G_{K_3}\right\|_\infty \,. 
\end{eqnarray*}

From the definition of $G_{K_3}$ , we have
\begin{equation}\label{estofDG}
  \p_x G_{K_3}(x, t)  
     = {\rm O}\left ( \mathfrak M K_3\right)\|u\|_{Y_s}={\rm O}\left(\mathfrak M K_3 \right)\|\phi\|_{H^s}\,.
\end{equation}
Hence, for $s<1$, 
\begin{equation}\label{psGK}
 \|\p_{x}^sG_{K_3}\|_\infty\leq C\mathfrak M K_3^{s}\|\phi\|_{H^s}\,.
\end{equation}
Since $s>1/2$, we then have
\begin{equation}\label{est-case12F5}
(\ref{est-case12F4})\leq  C\mathfrak M \|\phi\|_{H^s}\sum_{K_1}K_1^{-\frac{2s-1}{2}+\e}\|u\|_{Y_s}^4\leq C\mathfrak M \|\phi\|_{H^s}\|u\|_{Y_s}^4\,.
\end{equation}
This completes our discussion on Case ({\ref{case1F}}).\\

\section{Proof of Case (\ref{case2F})}\label{subsection2F} 
\setcounter{equation}0

In this case, it suffices to consider the following 
subcases:
\begin{eqnarray}
 & K\leq 2^{10}K_2\,;   & \label{subcase21F}\\
  &  K\leq 2^{10}M\,;  & \label{subcase22F}\\
  &  K> 2^{9}(K_2+M)  \,\,\, {\rm and}\,\,\,  K_3 \geq K_0^{1/2}; & \label{subcase23F}\\
  &    K> 2^{9}(K_2+M)\,,  \,\,\,  K_3 \leq K_0^{1/2}\,\,\, {\rm and}\,\,\, M\geq 2^{-10}K_0^{2/3}; & \label{subcase24F}\\
  &  K> 2^{9}(K_2+M)\,, \,\,\, K_3 \leq K_0^{1/2}\,\,\, {\rm and}\,\,\, M < 2^{-10}K_0^{2/3} \,.& 
\label{subcase25F}
\end{eqnarray}

(\ref{subcase21F}) and (\ref{subcase22F}) can be proved exactly the same as  the case (\ref{case11F}) and the case  (\ref{case12F}) respectively.  We omit the details. \\

For the case of (\ref{subcase23F}), observe that (\ref{case2F}) and (\ref{subcase23F}) imply 
\begin{equation}\label{KlessK1}
   K \leq CK_1\,
\end{equation}
and
\begin{equation}\label{K0lessK3}
 K_0^{1/2}\leq K_2^{1/2}K_3^{1/2}\,. 
\end{equation}
Hence (\ref{w1s2}) is bounded by
\begin{equation}\label{subcase23F-est1}
  \int \sum_{K_1}\sum_{K\leq CK_1}K^sv^*_K|u_{K_1}| \sum_{\substack{ K_0\geq K_2\geq K_3\\ K_0\leq K_3^2}}
  K_0u_{K_0}^* |u_{K_2}||u_{K_3}|\|G_{K_3}\|_\infty  dx dt\,.
\end{equation} 
Applying H\"older inequality, we estimate (\ref{subcase23F-est1}) by
\begin{equation}\label{subcase23F-est2}
C\mathfrak M \int \left(\sum_{K}|v^*_K|^2\right)^{\frac12} \left(\sum_{K_1}K_1^{2s}|u_{K_1}|^2\right)^{\frac12}  \prod_{j=0, 2, 3}\left(\sum_{K_j}K_j^{1+\e}|u_{K_j}|^2\right)^{\frac12}
dx dt\,.
\end{equation}
One more use of H\"older inequality yields that (\ref{subcase23F-est1}) is bounded by
$$
C\mathfrak M \left\| \left(\sum_{K}|v_K|^2\right)^{\frac12} \right\|_4
 \left\|\left(\sum_{K_1}K_1^{2s}|u_{K_1}|^2\right)^{\frac12}\right\|_4 
  \prod_{j=0, 2, 3}\left\|\left(\sum_{K_j}K_j^{1+\e}|u_{K_j}|^2\right)^{\frac12}\right\|_6\,.
$$
Hence we obtain 
\begin{equation}
(\ref{subcase23F-est1})\leq C\mathfrak M \|u\|_{Y_s}^4\,.
\end{equation}
This finishes the proof of (\ref{subcase23F}).\\

For the case of (\ref{subcase24F}),  
we estimate (\ref{w1s2}) by
\begin{equation}\label{est1-subcase24F}
\sum_{K_2, K_3} 
 \int\sum_{K_1}\sum_{K\leq CK_1}K^sv^*_K|u_{K_1}| \sum_{ K_0}
  K_0|u^*_{K_0}| |u_{K_2}| |u_{K_3}| \sum_{M\geq  C K_0^{2/3}}\left| g_{K_3, M}\right|  dx dt\,,
\end{equation}
which is dominated by
\begin{equation}\label{est2-subcase24F}
\begin{aligned}
  & \,\,C   \sum_{K_2, K_3}\int \left( \sum_K|v_K^*|^2\right)^{1/2}
 \left( \sum_{K_1}K_1^{2s}|u_{K_1}|^2\right)^{1/2} |u_{K_2}||u_{K_3}| \\
 &\hspace{1.5cm} \left( \sum_{K_0}K_0 |u_{K_0}^*|^2 \right)^{1/2}
 \left(\sum_{M}M^{3/2}|g_{K_3, M}|^{2}\right)^{1/2}  dxdt\,.
\end{aligned}
\end{equation}
By H\"older inequality with $L^4$ norms for the first two functions in the integrand, 
$L^{6+}$ for the next three functions, and $ L^p$ norm (very large $p$) for the last one,   
(\ref{est2-subcase24F}) is dominated by
\begin{equation}
C\|u\|_{Y_s}\sum_{K_2, K_3}\|u_{K_2}\|_{6+}\|u_{K_3}\|_{6+}
    \left\|\left( \sum_{K_0}K_0 |u_{K_0}^*|^2 \right)^{1/2} \right\|_{6+}  
\| \p_x^{3/4}G_{K_3}\|_{\infty} \,.
\end{equation}
Applying (\ref{psGK}), we estimate (\ref{est2-subcase24F}) by
\begin{eqnarray*}
  & & C{\mathfrak M} \|\phi\|_{H_s}\|u\|_{Y_s}^2 \prod_{j=2}^3 \sum_{K_j}K_j^{3/8}\|u_{K_j}\|_{6+} \\
 &\leq & C{\mathfrak M} \|\phi\|_{H_s}\|u\|_{Y_s}^2 \prod_{j=2}^3 \sum_{K_j}K_j^{3/8}\|u_{K_j}\|_{0+, \frac12} \\
& \leq & C{\mathfrak M}\|\phi\|_{H_s}\|u\|_{Y_s}^4 \,,
\end{eqnarray*}
as desired. This completes the discussion of (\ref{subcase24F}).\\

We now turn to the case (\ref{subcase25F}).  In this case, we have
\begin{equation}
 |n_0+n_1| + 2K_2 +M\geq          |n|\geq K/2\geq 2^8(K_2+M)\,,
\end{equation}
which implies 
\begin{equation}\label{largen0+n1}
 |n_0+n_1|\geq 2^5(K_2+M)\,.
\end{equation}
Notice that
\begin{equation}\label{cru-obs}
\begin{aligned}
  &\hspace{2.5cm} (n_0+n_1+n_2+n_3+m)^3-n_0^3-n_1^3-n_2^3-n_3^3-m^3 = \\
  &\hspace{0.5cm} 3(n_0+n_1)(n_0+n_2+n_3+m)(n_1+n_2+n_3+m)
   + (n_2+n_3+m)^3-n_2^3-n_3^3-m^3\,.
   \end{aligned}
 \end{equation}
From (\ref{largen0+n1}), (\ref{cru-obs}) and (\ref{subcase25F}), we obtain
\begin{equation}\label{cru-obs1}
\left| n^3-n_0^3-n_1^3-n_2^3-n_3^3-m^3  \right|\geq C(K_2+M)K_0K_1 \geq CK_0K_1\geq CK_0^2\,.
\end{equation}
Henceforth one of the following four statements must be true:
\begin{eqnarray}
 &  \left| \lambda- n^3\right|\geq  K_0^2 \,, &  \label{IF} \\
&   \left |(\lambda-\lambda_1-\lambda_2-\lambda_3-\mu ) -n_0^3\right|\geq K_0^2\,, &  \label{IIF}\\
&  \exists  i \in \{1, 2, 3\}\,\,\, {\text {such that}}\,\,\, |\lambda_i - n_i^3|\geq  K_0^2\,,& \label{IIIF}\\
& |\mu |\geq K_0^2\,.\label{IVF}
\end{eqnarray}

For the case of (\ref{IF}), we set 
\begin{equation}\label{defofv-ti}
{\ti{ v}}(x, t) = \left({\wh v} \Id_{|\lambda-n^3|\geq K_0^2}\right)^\vee(x, t)\,.
\end{equation} 
We then estimate (\ref{w1s2}) by
\begin{equation}\label{est-IF}
\sum_{K_2, K_3}\|u_{K_2}\|_{\infty}\|u_{K_3}\|_\infty \|G_{K_3}\|_\infty \sum_{K_0}\int |\p_xu_{K_0}| \sum_{K_1}\sum_{K\leq CK_1}K^s{\ti v}_K^*|u_{K_1}| dxdt\,.
\end{equation}
This is clearly bounded by
\begin{equation}\label{est-IF1}
C\mathfrak M \|u\|^2_{Y_s} \sum_{K_0}\int K_0 |u^*_{K_0}| \left( \sum_K |\ti v_K^*|^2\right)^{1/2}
  \left(\sum_{K_1}K_1^{2s}|u_{K_1}|^2\right)^{1/2} dxdt\,.
\end{equation}
Using Cauchy-Schwarz inequality, we bound (\ref{est-IF1}) by
\begin{equation}\label{est-IF2}
C\mathfrak M \|u\|^2_{Y_s} \int\!\! \left(  \sum_{K_0} {K_0^{\e}} |u^*_{K_0}|^2 \right)^{\frac12}\! \left(\sum_{K_0} 
 K_0^{2-\e}\sum_K |\ti v_K^*|^2\right)^{\frac12}\!
  \left(\sum_{K_1}K_1^{2s}|u_{K_1}|^2\right)^{\frac12} \!\!\!\!dxdt\,.
\end{equation}
By H\"older inequality, (\ref{est-IF2}) is majorized by
$$
C\mathfrak M \|u\|^2_{Y_s} \left\|\left(  \sum_{K_0} {K_0^{\e}} |u^*_{K_0}|^2 \right)^{\frac12} 
 \right \|_4 \left\|\left(\sum_{K_0} 
 K_0^{2-\e}\sum_K |\ti v_K^*|^2\right)^{\frac12}\right\|_2
  \left\|\left(\sum_{K_1}K_1^{2s}|u_{K_1}|^2\right)^{\frac12} \right\|_4\,, $$
which is controlled by
\begin{equation}
C\mathfrak  M \|u\|_{Y_s}^3\|\p_x^\e u\|_4\left( \sum_{K_0}K_0^{2-\e}\|\ti v\|_2^2 \right)^{1/2}
\leq C\mathfrak  M \|u\|_{Y_s}^3\|\p_x^\e u\|_4\sum_{K_0}K_0^{-\e/2}\leq  C\mathfrak  M \|u\|_{Y_s}^4 \,.
\end{equation}
This finishes the proof of the case (\ref{IF}). \\

For the case of (\ref{IIF}), let $\ti u$ be defined by
\begin{equation}\label{defoftiu}
{\ti u}= ({\wh u}\Id_{|\lambda-n^3|\geq K_0^2})^\vee\,.
\end{equation}
Then (\ref{w1s2}) can be estimated by
\begin{equation}\label{est-IIF1}
\sum_{K_2, K_3}\|u_{K_2}\|_{\infty}\|u_{K_3}\|_\infty\|G_{K_3}\|_\infty \sum_{K_0}\int |\p_x\ti u_{K_0}| \sum_{K_1}\sum_{K\leq CK_1}K^s{v}_K^*|u_{K_1}| dxdt\,.
\end{equation}
By Sch\"ur test and H\"older inequality, we control (\ref{est-IIF1}) by
\begin{equation}\label{est-IIF2}
\sum_{K_2, K_3}\|u_{K_2}\|_{\infty}\|u_{K_3}\|_\infty\|G_{K_3}\|_\infty 
 \sum_{K_0}\|\p_x\ti u_{K_0}\|_2  \left\| \left( \sum_{K}|v_K|^2 \right)^{1/2}\right\|_4 
\left\| \left( \sum_{K_1}K_1^{2s}|u_{K_1}|^2 \right)^{1/2}\right\|_4\,,
\end{equation}
which is bounded by
\begin{equation}\label{est-IIIF3}
 C\mathfrak M\|u\|_{Y_s}^3\sum_{K_0}\|u_{K_0}\|_{0, \frac12} \leq C\mathfrak M\|u\|_{Y_s}^4\,.
\end{equation}
This completes the proof of the case (\ref{IIF}).\\

For the case of (\ref{IIIF}), if $j=1$,  then we dominate (\ref{w1s2}) by
\begin{equation}\label{est-IIIF1}
\sum_{K_2, K_3}\|u_{K_2}\|_{\infty}\|u_{K_3}\|_\infty\|G_{K_3}\|_\infty \sum_{K_0}\int |\p_x u_{K_0}| \sum_{K_1}\sum_{K\leq CK_1}K^s{v}_K^*|{\ti u}_{K_1}| dxdt\,.
\end{equation}
As we did in the case (\ref{IIF}), we bound (\ref{est-IIIF1}) by
\begin{equation}
C \mathfrak M \|u\|_{Y_s}^2 \sum_{K_0}\|\p_{x}u_{K_0}\|_4\|v\|_4\left\| \left( \sum_{K_1}K_1^{2s}
 |\ti u_{K_1}|^2 \right)^{1/2} \right\|_2\,.
\end{equation}
This can be futher controlled by
\begin{equation}
 C\mathfrak M \|u\|_{Y_s}^3 \sum_{K_0}\frac{1}{K_0}\|\p_{x}u_{K_0}\|_4\|v\|_4
 \leq  C\mathfrak M \|u\|_{Y_s}^3 \sum_{K_0}\frac{1}{K_0}\|u_{K_0}\|_{1, \frac13}\leq 
 C\mathfrak M \|u\|_{Y_s}^4\,,
\end{equation}
as desired.  \\

We now consider $j=2$ or $j=3$. Without loss of generality, assume $j=2$. In this case, we estimate
(\ref{w1s2}) by
\begin{equation}\label{est-IIIF4}
\sum_{K_3}\|u_{K_3}\|\|G_{K_3}\|_\infty \sum_{K_0}\int |\p_x u_{K_0}| \sum_{K_1}\sum_{K\leq CK_1}K^s{v}_K^*|{u}_{K_1}| \sum_{K_2\leq CK_0}|\ti u_{K_2}| dxdt\,,
\end{equation}
which is bounded by
$$
 C\mathfrak M \|u\|_{Y_s}\sum_{K_0}\|\p_xu_{K_0}\|_\infty\sum_{K_2\leq K_0}\|\ti u_{K_2}\|_2 \|v\|_4 \left\| \left( \sum_{K_1}K_1^{2s}
 |u_{K_1}|^2 \right)^{1/2} \right\|_4\,.
$$
Notice that
\begin{eqnarray*}\label{est-IIIF5}
\sum_{K_0}\|\p_xu_{K_0}\|_\infty\sum_{K_2\leq K_0}\|\ti u_{K_2}\|_2 & \leq &
 C\sum_{K_0}\frac{1}{K_0}\|\p_xu_{K_0}\|_\infty \|u\|_{Y_s} \\
 & \leq & C\sum_n\int|\wh u(n, \lambda)|d\lambda\|u\|_{Y_s}  \\
 &\leq &  C\|u\|_{Y_s}^2  \,.
\end{eqnarray*}
Henceforth (\ref{est-IIIF4}) is dominated by
\begin{equation}\label{est-IIIF6}
(\ref{est-IIIF4})\leq C {\mathfrak M}\|u\|_{Y_s}^4\,. 
\end{equation}
This completes the case of (\ref{IIIF}). \\

We now turn to the most difficult case (\ref{IVF}) in Case (\ref{case2F}).  We should decompose $G_{K_3}$, with respect to
the $t$-variable,  into Littlewood-Paley multipliers in the same spirit as before.
  More precisely, for any dyadic number $L$,  let $Q_L$ be 
\begin{equation}\label{defofQL}
 Q_Lu(x, t) = \int \psi_L(\tau)u(x, t-\tau) d\tau\,. 
\end{equation}
Here the Fourier transform of $\psi_L$ is a bump function supported on $[-2L, 2L]$ and $
{\wh {\psi_L}}(x)=1$ if $x\in [-L, L]$. Let 
\begin{equation}\label{defofuL}
\Pi_Lu= Q_Lu - Q_{L/2}u\,.
\end{equation}
Then $\Pi_Lu$ gives a Littlewood-Paley multiplier with respect to the time variable $t$. 
Using this multiplier, we represent
\begin{equation}\label{uKL}
 u_K = \sum_{L}u_{K, L}\,.
\end{equation}
Here $u_{K, L}= \Pi_L(u_K)$. We decompose $G_{K_3}$ as
\begin{equation}\label{decompGK3}
\begin{aligned}
 G_{K_3} & =  C+ \sum_{L}\left( F_3(Q_{L}P_{4K_3}u, \cdots, Q_LP_{K_3/8}u)-F_3(Q_{L/2}P_{4K_3}u, \cdots,
 Q_{L/2}P_{K_3/8}u)  \right)   \\
  & = C+ \sum_{\substack{j=4,2,1,\frac12, \frac14, \frac18\\L }}
           H_{K_3, L}\,u_{jK_3, L}\,,  
\end{aligned}
\end{equation} 
where $H_{K_3, L}$ is given by
\begin{equation}\label{defofHK3}
H_{K_3, L}= F_4\left(Q_{\ell L}P_{4K_3}u, \cdots, Q_{\ell L}P_{K_3/8}u; \ell=1, \frac12\right)\,.
\end{equation}

Let ${\mathfrak M}_1 $ be defined by
\begin{equation}
\mathfrak M_1=\sup \left\{ |D^\alpha F_4(u_1,\cdots, u_{12})|  :  u_j\,\,\,{\text{satisfies}} \,\,\, \|u_j\|_{Y_s}\leq C_0\|\phi\|_{H^s}\,\,\, {\text{for all} }\,\,\, j=1, \cdots, 12;\,\,\, \alpha \right\}\,.
\end{equation}
Here $ D^\alpha=\p_{x_1}^{\alpha_1}\cdots\p_{x_{12}}^{\alpha_{12}} $ and $\alpha $ is taken over all tuples $ (\alpha_1, \cdots, \alpha_{12})\in (\mathbb N \cup \{0\})^{12}$ with 
$0\leq \alpha_j\leq 1$ for all $j\in\{1, \cdots, 12\}$.
$\mathfrak M_1$ is a real number because $F_4\in C^1$. \\

In order to finish the proof, we need to consider further three subcases:
\begin{eqnarray}
  & L\leq 2^{10}K_3^3 \,,  & \label{LK3}\\
  &  2^{10}K_3^3<L\leq 2^{-5}K_0^2\,, & \label{K3L}\\
  &  L> 2^{-5} K_0^2\,. & \label{LK0}
\end{eqnarray}

For the contribution of (\ref{LK3}), 
we set
\begin{equation}\label{hKL}
h_{K_0, jK_3, L}= \left( \wh{H_{K_3,L}u_{jK_3, L}}\Id_{|\mu|\geq K_0^2}\right)^\vee\,.
\end{equation}
Here $j=4, 2, 1, \frac12, \frac14, \frac18$. 
From the definition of $H_{K_3, L}$, we get
\begin{equation}\label{L4ofHK3}
\left\| h_{K_0, jK_3, L}\right\|_4
\leq C{\mathfrak M_1}\|\phi\|_{H^s}\frac{L}{K_0^2}\|u_{jK_3, L}\|_4\,.
\end{equation}
Then (\ref{w1s2}) is bounded by
\begin{equation}\label{est-LK3}
\begin{aligned}
  & \sum_{K_2}\|u_{K_2}\|_{\infty}  \sum_{K_0}\int
 K_0 u^*_{K_0} \sum_{K_3\leq CK_0^{1/2}}\|u_{K_3}\|_\infty \\
 &
\hspace{1cm}\cdot \sum_{L\leq CK_3^3}\left|h_{K_0, jK_3, L}\right|
 \sum_{K_1}\sum_{K\leq CK_1}K^s{v}_K^*|{ u}_{K_1}| dxdt\,,
\end{aligned}
\end{equation}
which is majorized by 
\begin{equation}\label{est-LK3-1}
\begin{aligned}
  & \sum_{K_2}\|u_{K_2}\|_{\infty}  \sum_{K_0}K_0\sum_{K_3\leq CK_0^{1/2}}\|u_{K_3}\|_\infty\int
  u^*_{K_0}  \\
 &
\hspace{1cm}\cdot \sum_{L\leq CK_3^3}\left|h_{K_0, jK_3, L}\right|
\left( \sum_K |v_K^*|^2\right)^{1/2} 
\left(\sum_{K_1}K_1^{2s}|u_{K_1}|^2\right)^{1/2} dxdt\,.
\end{aligned}
\end{equation}

Using H\"older inequality with $L^4$ norms for four functions in the integrand,  we estimate (\ref{est-LK3-1}) by 
\begin{equation}\label{est-LK31}
\begin{aligned}
 &    C{\mathfrak M_1}\|\phi\|_{H^s} \|u\|^2_{Y_s}\sum_{K_0}K_0 \| u_{K_0}\|_4
 \sum_{K_3\leq K_0^{1/2}}\|u_{K_3}\|_\infty 
\sum_{L\leq CK_3^3}\frac{L}{K_0^2}\left\|u_{jK_3, L} \right\|_4 \\
\leq &\,  C{\mathfrak M_1}\|\phi\|_{H^s}^2 \|u\|^3_{Y_s}\sum_{K_0}K_0^{1/2} \| u_{K_0}\|_{0, \frac13} \\
\leq &\, C{\mathfrak M_1}\|\phi\|_{H^s}^2 \|u\|^4_{Y_s} \,.
\end{aligned}
\end{equation}
This finishes the case of (\ref{LK3}).\\

For the contribution of (\ref{K3L}), 
we bound (\ref{w1s2}) by
\begin{equation}\label{est-K3L}
\begin{aligned}
  & \sum_{K_2}\|u_{K_2}\|_{\infty}\sum_{K_3} \|u_{K_3}\|_\infty
 \int \sum_{K_0} |\p_x u_{K_0}| \sum_{2^{10}K_3^3<L\leq 2^{-10}K_0^2}|h_{K_0, jK_3, L}|\\
 &
\hspace{1cm}\cdot  \sum_{K_1}\sum_{K\leq CK_1}K^s{v}_K^*|{ u}_{K_1}| dxdt\,,
\end{aligned}
\end{equation}
which is dominated by
\begin{equation}\label{est-K3L1}
\begin{aligned}
  & C\|u\|_{Y_s}\sum_{K_3} \|u_{K_3}\|_\infty \sum_{\substack{\Delta\leq 2^{-10}\\
 \Delta \,\,{\rm dyadic}}}
 \int \sum_{K_0} |\p_x u_{K_0}| \sum_{\substack{2^{10}K_3^3<L\\
   \frac{\Delta}{2}K_0^2< L\leq \Delta K_0^2}}|h_{K_0, jK_3, L}|\\
 &
\hspace{1cm}\cdot  \left( \sum_{K}|v^*_K|^2 \right)^{1/2}
  \left( \sum_{K_1}K_1^{2s}|u_{K_1}|^2\right)^{1/2} dxdt\,,
\end{aligned}
\end{equation}
By Cauchy-Schwarz inequality, we estimate (\ref{est-K3L1}) further by
\begin{equation}\label{est-K3L2}
\begin{aligned}
  & C\|u\|_{Y_s}\sum_{K_3} \|u_{K_3}\|_\infty \sum_{\substack{\Delta\leq 2^{-10}\\
 \Delta \,\,{\rm dyadic}}} \Delta^{-1/2}
 \int \sum_{K_0} \frac{|\p_x u_{K_0}|}{K_0}
  \\
 &
\hspace{1cm}\cdot \left(\sum_{\substack{2^{10}K_3^3<L\\
   \frac{\Delta}{2}K_0^2< L\leq \Delta K_0^2}}L|h_{K_0, jK_3, L}|^2\right)^{1/2} \left( \sum_{K}|v^*_K|^2 \right)^{1/2}
  \left( \sum_{K_1}K_1^{2s}|u_{K_1}|^2\right)^{1/2} dxdt\,,
\end{aligned}
\end{equation}
Applying H\"older inequality with $L^\infty$ norm for the first function in the integrand, 
 $ L^2$ norm for the second one, and $L^4$ norms for the last two functions,   
 we then majorize (\ref{est-K3L2}) by
\begin{equation}\label{est-K3L3}
   C\|u\|_{Y_s}^2\sum_{K_3} \|u_{K_3}\|_\infty\!\!\! \sum_{\substack{\Delta\leq 2^{-10}\\
 \Delta \,\,{\rm dyadic}}} \Delta^{-1/2}
 \sum_{K_0} \frac{\|\p_x u_{K_0}\|_\infty}{K_0}
  \left\| \left(\sum_{\substack{2^{10}K_3^3<L\\
   \frac{\Delta}{2}K_0^2< L\leq \Delta K_0^2}}L|h_{K_0, jK_3, L}|^2\right)^{1/2}\right\|_2 \,.
\end{equation}
Notice that if $L\sim \Delta K_0^2$, then
\begin{equation}\label{L2hKL}
 \|h_{K_0, jK_3, L}\|_2\leq C{\mathfrak M_1}\|\phi\|_{H^s}\Delta \|u_{jK_3, L}\|_2\,.
\end{equation}
Thus we have
\begin{equation}\label{L2hKL1}
\begin{aligned}
  &\left\| \left(\sum_{\substack{2^{10}K_3^3<L\\
   \frac{\Delta}{2}K_0^2< L\leq \Delta K_0^2}}L|h_{K_0, jK_3, L}|^2\right)^{1/2}\right\|_2 \\
\leq & \,C{\mathfrak M_1}\|\phi\|_{H^s}\Delta \left( \sum_{\substack{2^{10}K_3^3<L\\
   \frac{\Delta}{2}K_0^2< L\leq \Delta K_0^2}}L \|u_{jK_3, L}\|_2^2  \right)^{1/2}\\
 \leq  & \,C{\mathfrak M_1}\|\phi\|_{H^s}\Delta \|u_{jK_3}\|_{0, \frac12}
 \\\leq &\,C{\mathfrak M_1}\|\phi\|^2_{H^s}\Delta  \,.
\end{aligned}
\end{equation}
From (\ref{L2hKL1}), (\ref{est-K3L3}) is bounded by
\begin{equation}\label{est-K3L4}
   C{\mathfrak M_1}\|\phi\|^2_{H^s}\|u\|_{Y_s}^2
  \sum_{K_3} \|u_{K_3}\|_\infty\!\!\! \sum_{\substack{\Delta\leq 2^{-10}\\
 \Delta \,\,{\rm dyadic}}} \Delta^{1/2}
 \sum_{K_0} \frac{\|\p_x u_{K_0}\|_\infty}{K_0}\,,
\end{equation}
which is clearly majorized by 
\begin{equation}\label{est-K3L5}
   C{\mathfrak M_1}\|\phi\|^2_{H^s}\|u\|_{Y_s}^4\,.
\end{equation}
This finishes the case of (\ref{K3L}).\\

For the contribution of (\ref{LK0}), we estimate (\ref{w1s2}) by
\begin{equation}\label{est-LK0-1}
\begin{aligned}
  & \sum_{K_2}\|u_{K_2}\|_{\infty}\sum_{K_3} \|u_{K_3}\|_\infty
 \int \sum_{K_0} |\p_x u_{K_0}| \sum_{L > 2^{-5}K_0^2}|h_{K_0, jK_3, L}|\\
 &
\hspace{1cm}\cdot  \sum_{K_1}\sum_{K\leq CK_1}K^s{v}_K^*|{u}_{K_1}| dxdt\,,
\end{aligned}
\end{equation}
which is bounded by
\begin{equation}\label{est-LK0-2}
\begin{aligned}
  & \sum_{K_2}\|u_{K_2}\|_{\infty}\sum_{K_3} \|u_{K_3}\|_\infty
 \int \left(\sum_{K_0}\frac{ |\p_x u_{K_0}|^2}{K_0^2}\right)^{1/2}
  \left( \sum_{L > 2^{-5}K_0^2}L|h_{K_0, jK_3, L}|^2\right)^{1/2}\\
 &
\hspace{1cm}\cdot \left( \sum_K |v_K^*|^2 \right)^{1/2}
       \left( \sum_{K_1}K_1^{2s}|u_{K_1}|^2\right)^{1/2}\,.
\end{aligned}
\end{equation}
Applying H\"older inequality, we estimate (\ref{est-LK0-2}) further by
\begin{equation}\label{est-LK0-3}
   C{\mathfrak M}_1\|u\|_{Y_s}^2\sum_{K_3} \|u_{K_3}\|_\infty
 \sum_{K_0} \frac{\|\p_x u_{K_0}\|_\infty}{K_0}
 \left\|\left( \sum_{L > 2^{-5}K_0^2}L|u_{ jK_3, L}|^2\right)^{1/2}\right\|_2 \,.
\end{equation}
This is clearly majorized by 
\begin{equation}
C{\mathfrak M}_1 \|\phi\|_{H_s}\|u\|_{Y_s}^4\,.
\end{equation}
Hence we complete the case of (\ref{LK0}).\\


\section{Proof of Case (\ref{case3F})}\label{subsection3F} 
\setcounter{equation}0

In this case, it suffices to consider the following subcases:
\begin{eqnarray}
 & M\geq 2^{-10}K_0^{2/3}\,;   & \label{subcase31F}\\
  &  M  < 2^{-10}K_0^{2/3}\,\,\, {\rm and}\,\,\, K_2^2K_3\geq 2^{-10}K_0^2;  & \label{subcase32F}\\
  &  M  < 2^{-10}K_0^{2/3}\,\,\, {\rm and}\,\,\, K_2^2M\geq 2^{-10}K_0^2;     & \label{subcase33F}\\
  &  M  < 2^{-10}K_0^{2/3}\,,\,\,\, K_2^2K_3 < 2^{-10}K_0^2 \,\,\,{\rm and}\,\,\, K_2^2M <2^{-10}K_0^2
 \,. & \label{subcase34F} 
\end{eqnarray}

For the case of (\ref{subcase31F}), notice that, in this case, we have
\begin{equation}\label{KlessM}
 K\leq CM^{3/2}\,. 
\end{equation}
Henceforth we estimate (\ref{w1s2}) by
\begin{equation}\label{est1-case31F}
 \int\sum_{K_1\geq K_2\geq K_3}|u_{K_1}||u_{K_2}||u_{K_3}|
   \sum_M \sum_{K\leq CM^{3/2}}K^sv^*_K\sum_{K_0\leq CM^{3/2}}K_0u_{K_0}^*
    |g_{K_3, M}|
dxdt\,,
\end{equation}
which is bounded by
\begin{equation}\label{est2-case31F}
  \int\sum_{K_1\geq K_2\geq K_3}|u_{K_1}||u_{K_2}||u_{K_3}|
   \sum_M  M^{\frac{3}{2}(1-s)} |g_{K_3, M}|\sum_{K\leq CM^{3/2}}K^sv^*_K  
    \left(\sum_{K_0}K_0^{2s}|u_{K_0}^*|^2\right)^{1/2}
  dxdt\,,
\end{equation}
since $ 1/2<s<1$. Applying Sch\"ur test, we estimate (\ref{est2-case31F}) by
\begin{equation}\label{est3-case31F}
\begin{aligned}
 &  \int\sum_{K_1\geq K_2\geq K_3}|u_{K_1}||u_{K_2}||u_{K_3}|
   \left(\sum_M  M^{3} |g_{K_3, M}|^2 \right)^{1/2}\\
 & \hspace{1cm}\left(\sum_{K}|v^*_K|^2\right)^{1/2}  
    \left(\sum_{K_0}K_0^{2s}|u_{K_0}^*|^2\right)^{1/2}
  dxdt\,.
\end{aligned}
\end{equation}
By H\"older inequality and $s>1/2$, (\ref{est3-case31F}) is majorized by
\begin{equation}\label{est4-case31F}
\begin{aligned}
& C\sum_{K_1\geq K_2\geq K_3}\|\p^{3/2}_xG_{K_3}\|_\infty \left(\prod_{j=1}^3\|u_{K_j}\|_{6+}\right)
\left\|\left(\sum_{K}|v_K|^2\right)^{1/2}  \right\|_4\left\| \left(\sum_{K_0}K_0^{2s}|u_{K_0}^*|^2\right)^{1/2} \right\|_4\\
 \leq & \,C\mathfrak{M}(\|\phi\|_{H^s}+\|\phi\|_{H^s}^2)\|u\|_{Y_s}\sum_{K_1\geq K_3\geq K_3}K_3^{3/2}
 \prod_{j=1}^3\|u_{K_j}\|_{6+} \\
\leq & \,C\mathfrak{M}(\|\phi\|_{H^s}+\|\phi\|_{H^s}^2)\|u\|_{Y_s}\prod_{j=1}^3
 \sum_{K_j}K_j^{1/2}\|u_{K_j}\|_{0+, \frac12} \\
\leq & \,C\mathfrak{M}(\|\phi\|_{H^s}+\|\phi\|_{H^s}^2)\|u\|_{Y_s}^4  \,.
\end{aligned}
\end{equation}
This finishes the case of (\ref{subcase31F}).  \\

For the case of (\ref{subcase32F}),  observe that, in this case, 
\begin{equation}\label{shift32F}
 K_0\leq CK_1^{1/2}K_2^{1/2}K_3^{1/2}\,.
\end{equation}
We estimate (\ref{w1s2}) by
\begin{equation}\label{est1-case32F}
 \int\sum_{K_1\geq K_2\geq K_3}|u_{K_1}||u_{K_2}||u_{K_3}|
   \sum_{K\leq CK_0}K^sv^*_K\sum_{K_0\leq C(K_1K_2K_3)^{1/2}}K_0u_{K_0}^*
    \|G_{K_3}\|_\infty
dxdt\,,
\end{equation}
which is bounded by
\begin{equation}\label{est2-case32F}
C\mathfrak M \int \left(\sum_{K}|v^*_K|^2\right)^{1/2}  
    \left(\sum_{K_0}K_0^{2s}|u_{K_0}^*|^2\right)^{1/2}\prod_{j=1}^3\sum_{K_j}K_j^{1/2}|u_{K_j}| dxdt\,.
\end{equation}
Using H\"older inequality with $L^4$ norms for first two functions and $L^6$ norms for 
the last three functions in the integrand, we obtain
\begin{equation}\label{est3-case32F}
C\mathfrak M \|u\|_{Y_s} \prod_{j=1}^3\left\|\sum_{K_j}K_j^{1/2}|u_{K_j}|\right\|_{6}
\leq  C\mathfrak M \|u\|_{Y_s}^4\,.
\end{equation}
This completes the case of (\ref{subcase32F}).\\

For the case of (\ref{subcase33F}),  we have, in this case, 
\begin{equation}\label{shift33F}
 K_0\leq CK_1^{1/2}K_2^{1/2}M^{1/2}\,.
\end{equation}
Hence we dominate (\ref{w1s2}) by
\begin{equation}\label{est1-case33F}
 \int\sum_{K_1\geq K_2\geq K_3}|u_{K_1}||u_{K_2}||u_{K_3}|
   \sum_M|g_{K_3, M}|\sum_{K\leq CK_0}K^sv^*_K\sum_{K_0\leq C(K_1K_2M)^{1/2}}K_0u_{K_0}^*
dxdt\,,
\end{equation}
which is bounded by
\begin{equation}\label{est2-case33F}
\begin{aligned}
 & \,C \sum_{K_3}\int \left(\sum_{K}|v^*_K|^2\right)^{1/2}  
    \left(\sum_{K_0}K_0^{2s}|u_{K_0}^*|^2\right)^{1/2}|u_{K_3}|\\
  & \cdot \left(\sum_{M} M|g_{K_3, M}|^2\right)^{1/2}
\prod_{j=1}^2\sum_{K_j}K_j^{1/2}|u_{K_j}| dxdt\,.
\end{aligned}
\end{equation}
Using H\"older inequality with $L^4$ norms for first two functions, $L^6$ norms for
the third one,  $L^{p}$ norm with $p$ very large for the fourth one, and 
$L^{6+}$ for the last two functions in the integrand, we obtain
\begin{equation}\label{est3-case34F}
C\|u\|_{Y_s} \prod_{j=1}^2\left\|\sum_{K_j}K_j^{1/2}|u_{K_j}|\right\|_{6+}
\sum_{K_3}\|u_{K_3}\|_6 \|\p_x^{1/2}G_{K_3}\|_\infty \,.
\end{equation}
Clearly (\ref{est3-case34F}) is dominated by
\begin{equation}\label{est4-case34F}
C\mathfrak M\|\phi\|_{H^s}\|u\|_{Y_s}^3 \sum_{K_3}K_3^{1/2}\|u_{K_3}\|_6
 \leq C\mathfrak M\|\phi\|_{H^s}\|u\|_{Y_s}^4 \,.
\end{equation}
Hence the case of (\ref{subcase33F}) is done.\\

For the case of (\ref{subcase34F}), we observe that, in this case,
\begin{equation}\label{MlargeK2}
M^2K_2\leq 2^{-10}K_0^2\,. 
\end{equation}
In fact, if (\ref{MlargeK2}) does not hold, then from (\ref{subcase34F}), 
$$
 M^2K_2 > 2^{-10}K_0^2 > K_2^2M\,.
$$
Thus $M> K_2$, which yields immediately
$$
 M^3 > M^2K_2 > 2^{-10}K_0^2\,,
$$
contradicting to $ M < 2^{-10}K_0^{2/3}$. Hence (\ref{MlargeK2}) must be true. 
From (\ref{MlargeK2}), $ K_2^2K_3 +K_2^2M < 2^{-9}K_0^2$, we get
\begin{equation}\label{thesmall}
 \left|(n_2+n_3+m)^3-n_2^3-n_3^3-m^3\right|\leq 2^{-5}K_0^2\,. 
\end{equation} 
Since $n_1+n_2+n_3+m\neq 0$, from (\ref{case3F}),  (\ref{subcase34F}) and (\ref{thesmall}), 
the crucial arithmetic observation (\ref{cru-obs}) then yields 
\begin{equation}\label{cru-est3F}
 |n^3-n_0^3-n_1^3-n_2^3-n_3^3-m^3| \geq 2K_0^2\,. 
\end{equation}

Henceforth one of the following four statements must be true:
\begin{eqnarray}
 &  \left| \lambda- n^3\right|\geq  K_0^2 \,, &  \label{I3F} \\
&   \left |(\lambda-\lambda_1-\lambda_2-\lambda_3-\mu ) -n_0^3\right|\geq K_0^2\,, &  \label{II3F}\\
&  \exists  i \in \{1, 2, 3\}\,\,\, {\text {such that}}\,\,\, |\lambda_i - n_i^3|\geq  K_0^2\,,& \label{III3F}\\
& |\mu |\geq K_0^2\,.\label{IV3F}
\end{eqnarray}

For the case of (\ref{I3F}),  
we estimate (\ref{w1s2}) by
\begin{equation}\label{est-I3F}
\sum_{K_1, K_2, K_3}\|u_{K_1}\|_\infty\|u_{K_2}\|_{\infty}\|u_{K_3}\|_\infty \|G_{K_3}\|_\infty 
 \sum_{K_0}\int K_0|u^*_{K_0}| \left| \sum_{K\leq CK_0} {\p_x^s\ti v}_K \right| dxdt\,.
\end{equation}
Then Cauchy-Schwarz inequality yields
\begin{equation}\label{est-I3F1}
\begin{aligned}
  & C\mathfrak M \|u\|^3_{Y_s} \left\|
  \left( \sum_{K_0}K_0^{2-2s}\left|\sum_{K\leq CK_0} \p_x^s {\ti v}_K \right|^2 \right)^{1/2} \right\|_2
\left\| \left( \sum_{K_0} K_0^{2s} |u^*_{K_0}|^2\right)^{1/2}\right\|_2\\
 \leq  &\, C\mathfrak M \|u\|^4_{Y_s} \left(\sum_{K_0}K_0^{2-2s}\sum_{K\leq CK_0}\left \|\p_x^s {\ti v}_K 
 \right\|_2^2   \right)^{1/2}   \,\leq \, C\mathfrak M \|u\|^4_{Y_s}\,.
\end{aligned}
\end{equation}
This finishes the proof of the case (\ref{I3F}). \\

For the case of (\ref{II3F}), 
(\ref{w1s2}) can be estimated by
\begin{equation}\label{est-II3F1}
\sum_{K_1, K_2, K_3}\|u_{K_1}\|_\infty\|u_{K_2}\|_{\infty}\|u_{K_3}\|_\infty\|G_{K_3}\|_\infty \sum_{K_0}\int K_0|\ti u_{K_0}^*| \sum_{K\leq CK_0}K^s{v}_K^* dxdt\,.
\end{equation}
By Sch\"ur test and H\"older inequality, we control (\ref{est-II3F1}) by
\begin{equation}\label{est-II3F2}
C\mathfrak M\|u\|_{Y_s}^3  \left\| \left( \sum_{K}|v^*_K|^2 \right)^{1/2}\right\|_2
\left\| \left( \sum_{K_0}K_0^{2s+2}|\ti u_{K_0}|^2 \right)^{1/2}\right\|_2\,,
\end{equation}
which is clearly  bounded by
\begin{equation}\label{est-III3F3}
 C\mathfrak M\|u\|_{Y_s}^3\left( \sum_{K_0} K_0^{2s}\|u_{K_0}\|_{0, \frac12}^2 \right)^{1/2}\,\leq\, C\mathfrak M\|u\|_{Y_s}^4\,.
\end{equation}
This completes the proof of the case (\ref{II3F}).\\

For the case of (\ref{III3F}), without loss of generality, assume 
$j=1$.  We then dominate (\ref{w1s2}) by
\begin{equation}\label{est-III3F1}
\sum_{K_2, K_3}\|u_{K_2}\|_{\infty}\|u_{K_3}\|_\infty\|G_{K_3}\|_\infty \sum_{K_1}\sum_{K_0}\int  K_0| u^*_{K_0}| |{\ti u}_{K_1}| \sum_{K\leq CK_0}K^s{v}_K^* dxdt\,.
\end{equation}
By H\"older inequality, we bound (\ref{est-III3F1}) by
\begin{equation}\label{est1-III3F1}
\begin{aligned}
 & \sum_{K_2, K_3}\|u_{K_2}\|_{\infty}\|u_{K_3}\|_\infty\|G_{K_3}\|_\infty \sum_{K_1}\sum_{K_0}\sum_{K\leq CK_0} K^s K_0 \|u_{K_0}\|_4 \|{\ti u}_{K_1}\|_2 \|v_K\|_4\\
 \leq &\, \sum_{K_2, K_3}\|u_{K_2}\|_{\infty}\|u_{K_3}\|_\infty\|G_{K_3}\|_\infty \sum_{K_1}
   \| u_{K_1}\|_{0, \frac12}\sum_{K_0}\sum_{K\leq CK_0} K^s \|u_{K_0}\|_4  \|v_K\|_4 \,.
\end{aligned}
\end{equation}
By Sch\"ur test, we dominate (\ref{est1-III3F1}) by
\begin{equation}\label{est2-III3F1}
\begin{aligned}
  &\,   C\mathfrak M \|u\|_{Y_s}^2\sum_{K_1}
   \| u_{K_1}\|_{0, \frac12}\left(\sum_{K_0} K_0^{2s} \|u_{K_0}\|^2_4\right)^{1/2}
   \left( \sum_{K}\|v_K\|^2_4\right)^{1/2} \\
 \leq  & \,C\mathfrak M \|u\|_{Y_s}^3\left(\sum_{K_0} K_0^{2s} \|u_{K_0}\|^2_{0, \frac13}\right)^{1/2}
   \left( \sum_{K}\|v_K\|^2_{0, \frac13}\right)^{1/2} \\
\leq &\, C\mathfrak M \|u\|_{Y_s}^4  \,.
\end{aligned}
\end{equation}
Hence the case of (\ref{III3F}) is done. \\

In order to finish the proof, as before we need to consider further three subcases:
\begin{eqnarray}
  & L\leq 2^{10}K_3^3 \,,  & \label{LK3-3}\\
  &  2^{10}K_3^3<L\leq 2^{-5}K_0^2\,, & \label{K3L-3}\\
  &  L> 2^{-5} K_0^2\,. & \label{LK0-3}
\end{eqnarray}

For the contribution of (\ref{LK3-3}), 
notice that
\begin{equation}\label{L6ofHK3}
 \|h_{K_0, jK_3, L}\|_6\leq C\mathfrak M_1\|\phi\|_{H^s}\frac{L}{K_0^2}\|u_{jK_3, L}\|_{6}\,.
\end{equation}
Here $h_{K_0, jK_3, L}$ is defined as in (\ref{hKL}). In this particular case we also have 
$K_3\leq K_0^{2/3}$ from $K_2^2K_3\leq 2^{-10}K_0^2$. 
Then (\ref{w1s2}) is bounded by
\begin{equation}\label{est-LK3-3}
  \int\sum_{K_0}K_0u^*_{K_0}\sum_{K\leq CK_0}K^s{v}_K^* \sum_{\substack{K_1\geq K_2 \geq K_3 \\ K_3\leq K_0^{2/3}}} |u_{K_1}||u_{K_2}||u_{K_3}| \sum_{L\leq CK_3^3}\left|h_{K_0, jK_3, L}\right| 
  dxdt\,.
\end{equation}
Write (\ref{est-LK3-3}) as
\begin{equation}\label{est1-LK3-3}
  \sum_{\substack{\Delta \,{\rm dyadic}\\ \Delta \leq 1 }}\int\sum_{K_0}K_0u^*_{K_0}
  \!\!\!\sum_{K\leq CK_0}K^s{v}_K^* \!\!\!\!\!\!\!\!\!\!\!\!\!\!\sum_{\substack{K_1\geq K_2 \geq K_3 \\    \Delta K_0^{2/3}/2 < K_3\leq \Delta K_0^{2/3}}} 
 \!\!\!\!\!\!\!\!\!\!\!|u_{K_1}||u_{K_2}||u_{K_3}| \sum_{L\leq CK_3^3}\left|h_{K_0, jK_3, L}\right| 
  dxdt\,.
\end{equation}
Observe that if $ \Delta K_0^{2/3}/2 < K_3\leq \Delta K_0^{2/3} $, then we have
\begin{equation}\label{shift-F}
 K_0\leq \Delta^{-3/2}K_1^{1/2}K_2^{1/2}K_3^{1/2}\,. 
\end{equation}
Henceforth, (\ref{est1-LK3-3}) is bounded by
\begin{equation}\label{est2-LK3-3}
\begin{aligned}
 &  C\|u\|_{Y_s}\sum_{K_0}\sum_{K\leq K_0} K^s
 \sum_{K_1, K_2}K_1^{1/2}K_2^{1/2}\sum_{\Delta\leq 1}\Delta^{-3/2} 
\sum_{K_3\sim \Delta K_0^{2/3}} K_3^{1/2} \\
& \int u_{K_0}^* v^*_{K}|u_{K_1}||u_{K_2}|\sum_{L\leq CK^3_3}|h_{K_0, jK_3, L}|dxdt  
\,.
\end{aligned}
\end{equation}
Applying H\"older inequality with $L^4$ norms for first two functions and 
$L^6$ for the last three functions, and then using (\ref{L6ofHK3}), we get
\begin{equation}\label{est3-LK3-3}
\begin{aligned}
 & C\mathfrak M_1\|\phi\|_{H^s}\|u\|_{Y_s}\sum_{K_0}\sum_{K\leq K_0} K^s
 \sum_{K_1, K_2}K_1^{1/2}K_2^{1/2}\sum_{\Delta\leq 1}\Delta^{-3/2}
\sum_{K_3\sim\Delta K_0^{2/3}} K^{1/2}_3\\
 & \hspace{1cm} \|u_{K_0}\|_4 \| v^*_{K}\|_4
 \|u_{K_1}\|_6\|u_{K_2}\|_6 \sum_{L\leq CK_3^3}\frac{L}{K_0^2}\|u_{jK_3, L}\|_{6}  
\,,
\end{aligned}
\end{equation}
which is bounded by
\begin{equation}\label{est4-LK3-3}
\begin{aligned}
 & C\mathfrak M_1\|\phi\|_{H^s}\|u\|_{Y_s}\sum_{K_0}\sum_{K\leq K_0} K^s
 \sum_{\Delta\leq 1}\Delta^{-3/2}
 \sum_{L\leq C\Delta^3  K_0^2}\frac{L}{K_0^2}
\\
 & \hspace{0.5cm} \|u_{K_0}\|_4 \| v^*_{K}\|_4
\sum_{K_1}K_1^{1/2}\|u_{K_1}\|_{0+, \frac12} 
 \sum_{K_2}K_2^{1/2}\|u_{K_2}\|_{0+, \frac12}  \sum_{K_3} K^{1/2}_3\|u_{jK_3, L}\|_{0+, \frac12}\\
 \leq &\, C\mathfrak M_1\|\phi\|_{H^s}^2\|u\|_{Y_s}^3 
   \sum_{\Delta\leq 1}\Delta^{3/2}\sum_{K_0}\sum_{K\leq CK_0}K^s\|u_{K_0}\|_4\|v_{K}\|_4  \\
 \leq &\, C\mathfrak M_1\|\phi\|_{H^s}^2\|u\|_{Y_s}^3 
    \left(\sum_{K_0}K_0^{2s}\|u_{K_0}\|_{0, \frac13}^2\right)^{1/2}
   \left(\sum_{K}\|v_{K}\|_{0, \frac13}^2\right)^{1/2}\\
  \leq &\, C\mathfrak M_1\|\phi\|_{H^s}^2\|u\|_{Y_s}^4 \,. 
\end{aligned}
\end{equation}
This completes the case (\ref{LK3-3}).\\

For the contribution of (\ref{K3L-3}),  (\ref{w1s2}) is bounded by
\begin{equation}\label{est-K3L-3}
\begin{aligned}
  & \sum_{K_1}\|u_{K_1}\|_\infty\sum_{K_2}\|u_{K_2}\|_{\infty}\sum_{K_3} \|u_{K_3}\|_\infty
 \int \sum_{K_0}\sum_{K\leq CK_0}K^s{v}^*_K K_0u^*_{K_0}\\
 &  \sum_{2^{10}K_3^3<L\leq 2^{-5}K_0^2}|h_{K_0, jK_3, L}|
                 dxdt\,,
\end{aligned}
\end{equation}
which is dominated by
\begin{equation}\label{est-1-K3L-3}
 C\|u\|^2_{Y_s}\sum_{K_3} \|u_{K_3}\|_\infty \sum_{\substack{\Delta\leq 2^{-5}\\
 \Delta \,\,{\rm dyadic}}}\sum_{K_0}\sum_{K\leq CK_0}K^s
  \int K_0 u^*_{K_0} v_K^*\!\!\!\!\!\! \sum_{\substack{2^{10}K_3^3<L\\
   \frac{\Delta}{2}K_0^2< L\leq \Delta K_0^2}}|h_{K_0, jK_3, L}|dxdt\,.
\end{equation}
Using Cauchy-Schwarz inequality, we estimate (\ref{est-1-K3L-3}) further by
\begin{equation}\label{est-2-K3L-3}
\begin{aligned}
 &  C\|u\|^2_{Y_s}\sum_{K_3} \|u_{K_3}\|_\infty \sum_{\substack{\Delta\leq 2^{-5}\\
 \Delta \,\,{\rm dyadic}}} \Delta^{-\frac12}\sum_{K_0}\sum_{K\leq CK_0}K^s
  \int  u^*_{K_0} v_K^*\\
 &\hspace{2cm}\left( \sum_{\substack{2^{10}K_3^3<L\\
   \frac{\Delta}{2}K_0^2< L\leq \Delta K_0^2}}L|h_{K_0, jK_3, L}|^2\right)^{1/2}dxdt\,.
\end{aligned}
\end{equation}
Employing H\"older inequality with $L^4$ norms for the first two functions and 
$L^2$ for the last one, we bound (\ref{est-2-K3L-3}) by
\begin{equation}\label{est-3-K3L-3}
\begin{aligned}
 &   C\|u\|^2_{Y_s}\sum_{K_3} \|u_{K_3}\|_\infty \sum_{\substack{\Delta\leq 2^{-5}\\
 \Delta \,\,{\rm dyadic}}} \Delta^{-\frac12}\sum_{K_0}\sum_{K\leq CK_0}K^s
  \|u_{K_0}\|_4 \| v_K\|_4  \\
  & \hspace{2cm}\cdot   \left\|\left( \sum_{\substack{2^{10}K_3^3<L\\
   \frac{\Delta}{2}K_0^2< L\leq \Delta K_0^2}}L|h_{K_0, jK_3, L}|^2\right)^{1/2}\right\|_2\,.
\end{aligned}
\end{equation}
From (\ref{L2hKL1}), (\ref{est-3-K3L-3}) is majorized by
\begin{equation}\label{est-4-K3L-3}
\begin{aligned}
&\,   C{\mathfrak M_1}\|\phi\|^2_{H^s} \|u\|^2_{Y_s}\sum_{K_3} \|u_{K_3}\|_\infty \sum_{\substack{\Delta\leq 2^{-5}\\
 \Delta \,\,{\rm dyadic}}} \Delta^{\frac12}\sum_{K_0}\sum_{K\leq CK_0}K^s
  \|u_{K_0}\|_4 \| v_K\|_4 \\
\leq & \, C{\mathfrak M_1}\|\phi\|^2_{H^s} \|u\|^3_{Y_s} 
     \left(\sum_{K_0}K_0^{2s}\|u_{K_0}\|_{0, \frac13}^2\right)^{1/2}
   \left(\sum_{K}\|v_{K}\|_{0, \frac13}^2\right)^{1/2} \\
\leq &\, C{\mathfrak M_1}\|\phi\|^2_{H^s} \|u\|^4_{Y_s}     \,.
\end{aligned}
\end{equation}
This finishes the proof for the case (\ref{K3L-3}).\\

For the contribution of (\ref{LK0-3}), we estimate (\ref{w1s2}) by
\begin{equation}\label{est-LK0-3-1}
   \sum_{K_1, K_2}\|u_{K_1}\|_\infty\|u_{K_2}\|_{\infty} \sum_{K_3}
 \|u_{K_3}\|_\infty
 \int \sum_{K_0} K_0 u^*_{K_0}\!\!\!\sum_{L > 2^{-5}K_0^2}|h_{K_0, jK_3, L}|
\sum_{K\leq CK_0}K^s{v}_K^* dxdt\,.
\end{equation}
By Cauchy-Schwarz inequality, (\ref{est-LK0-3-1}) is bounded by
\begin{equation}\label{est-LK0-3-2}
\sum_{K_1, K_2}\|u_{K_1}\|_\infty\|u_{K_2}\|_{\infty}\sum_{K_3} \|u_{K_3}\|_\infty
\sum_{K_0}\sum_{K\leq CK_0} K^s \int  v_K^* u^*_{K_0}\!
 \left(\sum_{L > 2^{-10}K_0^2}L|h_{K_0, jK_3, L}|^2\right)^{1/2}\!\!\!\!
 dxdt\,.
\end{equation}
Employing H\"older inequality with $L^4$ norms for the first two functions and 
$L^2$ norm for the last one, we dominate (\ref{est-LK0-3-2}) by
\begin{equation}\label{est-LK0-3-3}
\begin{aligned}
& C{\mathfrak M}_1\|u\|_{Y_s}^2\sum_{K_3} \|u_{K_3}\|_\infty
 \sum_{K_0}\sum_{K\leq CK_0} K^s\|u_{K_0}\|_4 \|v_K\|_4 
 \left\|\left( \sum_{L > 2^{-5}K_0^2}L|u_{ jK_3, L}|^2\right)^{1/2}\right\|_2 \\
\leq & \, C{\mathfrak M}_1\|u\|_{Y_s}^2\sum_{K_3} \|u_{K_3}\|_\infty
 \sum_{K_0}\sum_{K\leq CK_0} K^s\|u_{K_0}\|_{0,\frac13} \|v_K\|_{0, \frac13} 
 \|u\|_{0, \frac12} \\
\leq & \,   C{\mathfrak M}_1 \| \phi \|_{H^s}\|u\|_{Y_s}^4  \,.
\end{aligned}
\end{equation}
Hence we complete the case of (\ref{LK0-3}).\\

\end{document}